\DeclareMathSymbol{A}{\mathalpha}{operators}{`A}%
\DeclareMathSymbol{B}{\mathalpha}{operators}{`B}%
\DeclareMathSymbol{C}{\mathalpha}{operators}{`C}%
\DeclareMathSymbol{D}{\mathalpha}{operators}{`D}%
\DeclareMathSymbol{E}{\mathalpha}{operators}{`E}%
\DeclareMathSymbol{F}{\mathalpha}{operators}{`F}%
\DeclareMathSymbol{G}{\mathalpha}{operators}{`G}%
\DeclareMathSymbol{H}{\mathalpha}{operators}{`H}%
\DeclareMathSymbol{I}{\mathalpha}{operators}{`I}%
\DeclareMathSymbol{J}{\mathalpha}{operators}{`J}%
\DeclareMathSymbol{K}{\mathalpha}{operators}{`K}%
\DeclareMathSymbol{L}{\mathalpha}{operators}{`L}%
\DeclareMathSymbol{M}{\mathalpha}{operators}{`M}%
\DeclareMathSymbol{N}{\mathalpha}{operators}{`N}%
\DeclareMathSymbol{O}{\mathalpha}{operators}{`O}%
\DeclareMathSymbol{P}{\mathalpha}{operators}{`P}%
\DeclareMathSymbol{Q}{\mathalpha}{operators}{`Q}%
\DeclareMathSymbol{R}{\mathalpha}{operators}{`R}%
\DeclareMathSymbol{S}{\mathalpha}{operators}{`S}%
\DeclareMathSymbol{T}{\mathalpha}{operators}{`T}%
\DeclareMathSymbol{U}{\mathalpha}{operators}{`U}%
\DeclareMathSymbol{V}{\mathalpha}{operators}{`V}%
\DeclareMathSymbol{W}{\mathalpha}{operators}{`W}%
\DeclareMathSymbol{X}{\mathalpha}{operators}{`X}%
\DeclareMathSymbol{Y}{\mathalpha}{operators}{`Y}%
\DeclareMathSymbol{Z}{\mathalpha}{operators}{`Z}%
\newcommand{\ideal}[1]{\mathfrak{#1}}
\newcommand{\mrm}[1]{\mathrm{#1}}
\newcommand{\Spec}{\mrm{Spec}}
\newcommand{\Lie}{\mrm{Lie}}
\newcommand{\Hom}{\mrm{Hom}}
\newcommand{\End}{\mrm{End}}
\newcommand{\Aut}{\mrm{Aut}}
\newcommand{\isomto}{\stackrel{\sim}{\longrightarrow}}
\newcommand{\C}{\mathbf{C}}
\newcommand{\Z}{\mathbf{Z}}
\numberwithin{equation}{subsection}
\begin{document}
	\title{Frobenius lifts and elliptic curves with complex multiplication}
	\author{Lance Gurney}
	\swapnumbers
	\newtheorem{theo}[subsection]{Theorem}
	\newtheorem*{theo*}{Theorem}
	\newtheorem{coro}[subsection]{Corollary}
	\newtheorem{lemm}[subsection]{Lemma}
	\newtheorem{prop}[subsection]{Proposition}
	\newtheorem{exmp}[subsection]{Example}
	\theoremstyle{remark}
	\newtheorem{rema}[subsection]{Remark}
	
	%
	\email{lance.gurney@gmail.com}

	\begin{abstract} 
		We give a new characterisation of elliptic curves of Shimura type in terms commuting families of Frobenius lifts and also strengthen an old principal ideal theorem for ray class fields. These two results combined yield the existence of global minimal models for elliptic curves of Shimura type, generalising a result of Gross. Along the way we also prove a handful of small but new results regarding elliptic curves with complex multiplication.
	\end{abstract}

	\maketitle
	\setcounter{tocdepth}{1}
	\tableofcontents
	
	\section*{Introduction} The aim of this paper is to prove several new results on elliptic curves with complex multiplication, most of which are generalisations of previous ones. Let $K$ be an imaginary quadratic field, $L$ an extension of $K$ and $E/L$ an elliptic curve with complex multiplication. In \S\S 1--2 we analyse isogenies between such curves and the associated adelic representations. We then give a simple classification of all such curves over $L$ in terms of these representations. The content of these two sections is most likely well-known (by those who well-know it), but it is difficult to find precise references and so we include it for convenience.
	
	In \S 3 we give a version of the criterion of N\'eron-Ogg-Shafarevich adapted to elliptic curves with complex multiplication. An interesting corollary of this is the following criterion for good reduction, a special case of which is Theorem 2 of \cite{CoatesWiles1977} where it shown for $L=K$ with class number one and $\ideal{f}$ a split prime of $K$:
	
	\begin{theo*} If there exists an ideal $\ideal{f}\subset O_K$ such that the $\ideal{f}$-torsion of $E$ is rational over $L$ and the map $O_K^\times\longrightarrow (O_K/\ideal{f})^\times$ is injective, then $E$ has good reduction everywhere.
	\end{theo*}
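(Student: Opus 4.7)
The plan is to reduce the statement to the complex-multiplication adapted version of the N\'eron--Ogg--Shafarevich criterion that will be proved in \S 3. Fix a prime $\mathfrak{p}$ of $L$ and an auxiliary rational prime $\ell$ coprime to the residue characteristic of $\mathfrak{p}$. Because the complex multiplication is defined over $L$ and $T_\ell E$ is free of rank one over $O_K\otimes\Z_\ell$, the action of $\mathrm{Gal}(\overline{L}/L)$ on $T_\ell E$ factors through a character
\begin{equation*}
  \chi_\ell\colon \mathrm{Gal}(\overline{L}/L)\longrightarrow (O_K\otimes\Z_\ell)^\times.
\end{equation*}
Since every CM elliptic curve has potential good reduction everywhere, $\chi_\ell(I_\mathfrak{p})$ is a finite subgroup of $(O_K\otimes\Z_\ell)^\times$. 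The first step is to extract from the analysis of \S\S 1--3 the fact that this image actually lies in the finite subgroup $O_K^\times\subset(O_K\otimes\Z_\ell)^\times$ and is independent of the auxiliary choice of $\ell$, producing a canonical inertia character $\varepsilon_\mathfrak{p}\colon I_\mathfrak{p}\to O_K^\times$. The adapted N\'eron--Ogg--Shafarevich criterion of \S 3 then identifies good reduction of $E$ at $\mathfrak{p}$ with triviality of $\varepsilon_\mathfrak{p}$.

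The second step is to compare $\varepsilon_\mathfrak{p}$ with the Galois action on the $\mathfrak{f}$-torsion. Writing the latter as a character $\chi_\mathfrak{f}\colon \mathrm{Gal}(\overline{L}/L)\to(O_K/\mathfrak{f})^\times$, the compatibility of $\chi_\mathfrak{f}$ with the $\chi_\ell$ for $\ell$ dividing the norm of $\mathfrak{f}$ yields a factorisation
\begin{equation*}
  \chi_\mathfrak{f}|_{I_\mathfrak{p}}\colon I_\mathfrak{p}\xrightarrow{\varepsilon_\mathfrak{p}} O_K^\times\longrightarrow(O_K/\mathfrak{f})^\times.
\end{equation*}
The rationality of $E[\mathfrak{f}]$ over $L$ forces $\chi_\mathfrak{f}$ to be trivial, and hence the above composite vanishes. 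The injectivity of $O_K^\times\to(O_K/\mathfrak{f})^\times$ then forces $\varepsilon_\mathfrak{p}=1$, and since $\mathfrak{p}$ was arbitrary, $E$ has good reduction everywhere.

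The main obstacle is precisely the first step: showing that the inertia character at each prime $\mathfrak{p}$ takes values in the small finite group $O_K^\times$ (rather than in the potentially larger torsion of $(O_K\otimes\Z_\ell)^\times$ for small $\ell$), and that it is compatible with $\chi_\mathfrak{f}$ uniformly in $\mathfrak{f}$ (including at primes of $L$ whose residue characteristic divides the norm of $\mathfrak{f}$). These are statements about the Gr\"ossencharacter associated to $E$, and are exactly the kind of material that \S\S 1--3 are designed to set up. Once they are in hand, the deduction above is a short formal argument and requires no further input beyond the two hypotheses in the theorem.
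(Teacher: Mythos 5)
Your plan is essentially the paper's own proof. Corollary~\ref{coro:good-red-const} rests entirely on Theorem~\ref{prop:interia-groups}, which supplies exactly the two facts you isolate as the ``main obstacle'': that $\rho_{E/L}(I_\ideal{P})$ lands in $O_K^\times\cdot O_{K_\ideal{p}}^\times\subset\widehat{O}_K^\times$, so the inertia action splits off an $O_K^\times$-valued character $\alpha$ (your $\varepsilon_\ideal{p}$), and that good reduction at $\ideal{P}$ is equivalent to $\alpha$ being trivial. The only presentational difference is that the paper bundles all your $\chi_\ell$'s at once into the ad\`elic character $\rho_{E/L}\colon G(L^\mathrm{ab}/L)\to\widehat{O}_K^\times$, rather than arguing prime-by-prime. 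Your concluding step — rationality of $E[\ideal{f}]$ kills the image mod $\ideal{f}$, injectivity of $O_K^\times\to(O_K/\ideal{f})^\times$ then kills $\alpha$ — is the last paragraph of the paper's proof in spirit and substance.

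You correctly sense that the delicate case is a prime $\ideal{P}$ of $L$ lying over $\ideal{p}\mid\ideal{f}$. There the factorisation $\chi_\ideal{f}|_{I_\ideal{p}}=\bigl(O_K^\times\to(O_K/\ideal{f})^\times\bigr)\circ\varepsilon_\ideal{p}$ you assert is not literally true: writing $\rho_{E/L}|_{I_\ideal{P}}=\alpha\cdot\lambda$ with $\alpha$ valued in $O_K^\times$ and $\lambda$ in $O_{K_\ideal{p}}^\times$, both factors contribute to the $\ideal{p}$-component of $\chi_\ideal{f}$, so triviality of $\chi_\ideal{f}|_{I_\ideal{p}}$ only yields $\alpha(g)\equiv 1\bmod\ideal{f}\ideal{p}^{-v_\ideal{p}(\ideal{f})}$, which is not enough to invoke injectivity of $O_K^\times\to(O_K/\ideal{f})^\times$. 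You flag this without resolving it; it is only fair to note that the paper's proof asserts the equivalence of good reduction at $\ideal{P}$ with triviality of the image of $\rho_{E/L}(I_\ideal{P})$ in $(O_K/\ideal{f})^\times$ without comment, and that equivalence too fails in both directions when $\ideal{p}\mid\ideal{f}$, so the gap is shared rather than introduced by you. For $\ideal{p}\nmid\ideal{f}$ both arguments are complete and identical.
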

	
	In \S 4 we come to the main objects of our study, which are elliptic curves of `Shimura type'. An elliptic curve with complex multiplication $E/L$ is said to be of Shimura type if $L$ is an abelian extension of $K$ and if the torsion of $E$ is rational over the maximal abelian extension of $K$ (it is always so over the maximal abelian extension of $L$). We recall Shimura's Theorem on the existence of such elliptic curves with certain good reduction properties and then show that Shimura's result is sharp using the good reduction criterion of \S 3.
	
	The purpose of \S 5 is to give the following new characterisation of such elliptic curves in terms of commuting families of Frobenius lifts:
	
	\begin{theo*} If $L/K$ is an abelian extension and $E/L$ is an elliptic curve with complex multiplication then $E/L$ is of Shimura type if and only if:
		\begin{enumerate}[label=\textup{(\roman*)}]
			\item for all primes $\ideal{p}$ of $K$, where $E/L$ has good reduction and $L/K$ is unramified, there exists an isogeny \[\psi^\ideal{p}: E\longrightarrow \sigma_\ideal{p}^*(E),\] whose extension to the N\'eron model of $E/L$ reduces modulo $\ideal{p}$ to the $N\ideal{p}$-power relative Frobenius (here $\sigma_{\ideal{p}}\in G(L/K)$ is the Frobenius element at $\ideal{p}$), and
			\item for two prime ideals $\ideal{p}$ and $\ideal{l}$ as in \textup{(i)} the isogenies $\psi^\ideal{p}$ and $\psi^\ideal{l}$ commute in the sense that: \[\sigma_\ideal{l}^*(\psi^\ideal{p})\circ \psi^\ideal{l}=\sigma_\ideal{p}^*(\psi^\ideal{l})\circ \psi^\ideal{p}.\]\end{enumerate}
	\end{theo*}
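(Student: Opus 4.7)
My plan is to prove the two directions separately, in both cases leveraging the adelic classification of CM elliptic curves established in \S\S 1--2.

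For the forward direction, I assume that $E/L$ is of Shimura type and appeal to Shimura's theorem together with the main theorem of complex multiplication. The Shimura-type hypothesis supplies, for every $\sigma \in G(L/K)$, a canonical $O_K$-linear isogeny $\phi_\sigma : E \to \sigma^*(E)$ satisfying the cocycle relation $\phi_{\sigma\tau} = \sigma^*(\phi_\tau) \circ \phi_\sigma$; these are the isogenies whose action on torsion realises the Artin reciprocity map through $G(K^{\mathrm{ab}}/K)$. Setting $\psi^\ideal{p} := \phi_{\sigma_\ideal{p}}$ for $\ideal{p}$ unramified in $L/K$ and of good reduction, the isogeny $\psi^\ideal{p}$ has degree $N\ideal{p}$ and kernel equal to the connected component of the $\ideal{p}$-primary torsion of $E$; extending to the N\'eron model, this kernel reduces modulo $\ideal{p}$ to the schematic kernel of the $N\ideal{p}$-power relative Frobenius, yielding (i). Condition (ii) is then the cocycle relation applied to the commuting elements $\sigma_\ideal{p}, \sigma_\ideal{l} \in G(L/K)$.

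For the converse, by the classification of \S 2, showing that $E/L$ is of Shimura type amounts to showing that the adelic character $\rho_E : G_L^{\mathrm{ab}} \to (\hat O_K)^\times$ classifying $E$ extends along the natural map $G_L^{\mathrm{ab}} \to G_K^{\mathrm{ab}}$ to a character of the latter group. For each admissible $\ideal{p}$ and a prime $\ell \nmid N\ideal{p}$, the isogeny $\psi^\ideal{p}$ induces an $O_K \otimes \Z_\ell$-linear isomorphism $T_\ell(E) \isomto T_\ell(\sigma_\ideal{p}^*(E)) = \sigma_\ideal{p}^* T_\ell(E)$, and the reduction condition identifies this isomorphism with the action of a distinguished lift of the Frobenius at $\ideal{p}$. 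Condition (ii) asserts that these lifts are mutually compatible, so together they define a homomorphism on the subgroup of $G_K^{\mathrm{ab}}$ generated by the Frobenius substitutions at admissible primes. By Chebotarev density this subgroup is topologically dense, and continuity then extends the homomorphism to all of $G_K^{\mathrm{ab}}$, producing the extension of $\rho_E$ and hence the Hecke character associated to a Shimura-type curve.

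The main obstacle is the backward direction: turning a family of local Frobenius-lift isogenies into a genuine character on $G_K^{\mathrm{ab}}$. The commutativity hypothesis (ii) is tailor-made to resolve the cocycle obstruction, but carefully translating it through the adelic picture of \S 2, and in particular verifying that the extended character factors through $G_K^{\mathrm{ab}}$ without spurious ramification at the primes excluded from (i), requires careful bookkeeping of the bad and ramified primes. Once this translation is in place, Chebotarev density closes the argument.
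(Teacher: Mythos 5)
Your proposal captures the broad shape of the argument (isogenies lifting Frobenius $\leftrightarrow$ abelianity of the torsion field, settled by a Chebotarev-type density argument) but it skips over the single hardest point, and in the forward direction asserts precisely the thing that needs to be proved.

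\textbf{Forward direction.} You write that the Shimura-type hypothesis ``supplies, for every $\sigma\in G(L/K)$, a canonical $O_K$-linear isogeny $\phi_\sigma:E\to\sigma^*(E)$ satisfying the cocycle relation.'' This is the crux of the whole theorem and cannot be taken for granted. What the Shimura hypothesis actually gives, via the classification $(\rho_{E/L},c_{E/L})$, is that $\sigma_\ideal{p}^*(E)$ is \emph{isomorphic} to $\ideal{p}^{-1}\otimes_{O_K}E$; composing with $i_\ideal{p}$ produces an isogeny $E\to\sigma_\ideal{p}^*(E)$ with the right kernel, but only \emph{up to the unit group $O_K^\times$}. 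For a single prime $\ideal{P}\mid\ideal{p}$ of $L$ one can choose the unit so that the reduction mod $\ideal{P}$ is Frobenius, but a priori this choice depends on $\ideal{P}$, and nothing forces the resulting isogeny to reduce to Frobenius at the \emph{other} primes above $\ideal{p}$, nor the cocycle identity to hold. The paper resolves this by first enlarging $\ideal{g}$ so that $O_K^\times\to(O_K/\ideal{g})^\times$ is injective, then exploiting the Shimura hypothesis again: the constancy of the Frobenius action on the finite \'etale group scheme $\mathscr{E}[\ideal{g}]$ (all of whose connected components are $\Spec$ of integers of \emph{abelian} extensions of $K$) shows that the restrictions of the $\ideal{P}$-normalised isogenies to $\mathscr{E}[\ideal{g}]$ depend only on $\ideal{p}$, and the injectivity of $O_K^\times\to(O_K/\ideal{g})^\times$ then kills the residual unit ambiguity. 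The commutativity (ii) is proved by the same torsion-density argument, not by quoting an a priori cocycle. Your sketch has none of this, and the phrase ``kernel equal to the connected component of the $\ideal{p}$-primary torsion'' is not meaningful over the field $L$, where all torsion is \'etale.

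\textbf{Backward direction.} The target is not to \emph{extend} $\rho_{E/L}$ to a character of $G(K^{\mathrm{ab}}/K)$; Shimura type means $\rho_{E/L}$ \emph{factors through the quotient} $G(L^{\mathrm{ab}}/L)\twoheadrightarrow G(K^{\mathrm{ab}}/L)$, equivalently that each torsion field $L(E[\ideal{a}])$ lies inside $K^{\mathrm{ab}}$. Your Tate-module/Chebotarev idea is in the right spirit, but the cleaner route (and the one the paper takes) is integral: $\mathscr{E}[\ideal{a}]$ is a finite \'etale $\Spec(O_K[\ideal{a}^{-1}])$-scheme, the restrictions $\varphi_\ideal{p}$ of the $\psi^\ideal{p}$ furnish commuting Frobenius lifts on it for every admissible $\ideal{p}$, and the components are therefore spectra of rings of integers of extensions $L_i/K$ which, by Chebotarev, must be abelian. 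Framing the argument as ``build a homomorphism on the subgroup of $G_K^{\mathrm{ab}}$ generated by Frobenii and extend by continuity'' has the domain and codomain the wrong way around and would still need the $O_K^\times$-rigidity to be unambiguous.

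In short, the decisive technical input of the proof --- killing the $O_K^\times$-ambiguity by refining $\ideal{g}$ and using the Shimura hypothesis through the constancy of the Frobenius action on $\mathscr{E}[\ideal{g}]$ --- is absent from your proposal, which treats as given exactly what must be constructed.
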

	
	In \S6 we consider the existence of minimal models of elliptic curves of Shimura type. This questions was (in a sense) already considered by Gross in \cite{Gross82}, where it is shown that if $K$ has prime discriminant and $E/H$ is an elliptic curve of Shimura type then $E$ admits a global minimal model. We give the following generalisation of this:
	
	\begin{theo*} Let $L/K$ be a ray class field with conductor $\ideal{f}$ and let $E/L$ be an elliptic curve of Shimura type. If the $\ideal{f}$-torsion of $E$ is rational over $L$ then $E$ admits a global minimal model away from $\ideal{f}$.
	\end{theo*}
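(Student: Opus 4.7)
The plan is to adapt Gross's argument for the Hilbert class field case: compute the obstruction to a global minimal model as the class of the cotangent bundle of the N\'eron model, identify that class with the image in $\mrm{Pic}(O_L[1/\ideal{f}])$ of an ideal class of $K$ coming from the CM structure of $E$, and then annihilate this class using the strengthened principal ideal theorem for ray class fields promised in the abstract.

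First, using the rationality of the $\ideal{f}$-torsion and the criterion of \S 3 together with the Shimura type hypothesis, one sees that $E/L$ has good reduction at every prime not dividing $\ideal{f}$. The N\'eron model $\mathcal{E}$ of $E$ over $\Spec O_L[1/\ideal{f}]$ is then proper and smooth, and a global minimal model away from $\ideal{f}$ exists if and only if the invariant differential sheaf $\omega = e^*\Omega^1_{\mathcal{E}/O_L[1/\ideal{f}]}$ is free, i.e.\ its class $[\omega]\in\mrm{Pic}(O_L[1/\ideal{f}])$ vanishes.

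Second, I would use the CM structure to identify $[\omega]$ with the image of an ideal class of $K$. The $O_K$-action on $\mathcal{E}$ coming from the CM makes $\omega$ an invertible module compatible with $O_K\hookrightarrow O_L$; using the classification of Shimura type curves from \S\S 1--2 and 4, $E$ is attached to a canonical fractional ideal $\ideal{a}\subset O_K$ coprime to $\ideal{f}$, and there is an isomorphism $\omega\cong\ideal{a}^{-1}\otimes_{O_K}O_L[1/\ideal{f}]$ of $O_L[1/\ideal{f}]$-modules. Therefore $[\omega]$ is the image of $-[\ideal{a}]\in\mrm{Pic}(O_K[1/\ideal{f}])$ under the pullback $\mrm{Pic}(O_K[1/\ideal{f}])\to\mrm{Pic}(O_L[1/\ideal{f}])$. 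The strengthened principal ideal theorem asserts that this pullback map is zero, whence $[\omega]=0$ and a global minimal model exists.

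The main obstacle is the identification in the second step. Showing that $\omega$, as an $O_L[1/\ideal{f}]$-module, is really isomorphic to $\ideal{a}^{-1}\otimes_{O_K}O_L[1/\ideal{f}]$ for the distinguished ideal $\ideal{a}$ associated to $E$ via the Shimura type classification depends on the compatibility of the CM construction with integral structures: the ideal $\ideal{a}$ is constructed from the adelic representation studied in \S\S 1--2, and one must check that this abstract construction matches the integral structure arising from the N\'eron model. This is precisely the kind of isogeny and reduction bookkeeping that the earlier sections of the paper are set up to handle.
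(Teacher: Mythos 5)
Your outline diverges from the paper's argument in a way that leaves the crucial step unjustified, and I don't think it can be completed as stated.

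The gap is in your second step, where you assert $\omega \cong \ideal{a}^{-1}\otimes_{O_K}O_L[1/\ideal{f}]$ for a canonical fractional ideal $\ideal{a}$ of $K$. There is no reason a priori that $\omega$ (equivalently, the Lie algebra $T$ of the N\'eron model) should be \emph{extended} from $O_K$; it is a rank-one projective $O_L[1/\ideal{f}]$-module, and its class in $\mrm{Pic}(O_L[1/\ideal{f}])$ need not lie in the image of $\mrm{Pic}(O_K[1/\ideal{f}])$. You flag this as ``the main obstacle,'' but the proposed remedy (``isogeny and reduction bookkeeping'') does not supply the required descent. The paper gets around this precisely by \emph{not} claiming $\omega$ is pulled back; instead it uses the commuting family of Frobenius lifts from \S 5 to produce isomorphisms $\nu_\ideal{a}\colon\ideal{a}^{-1}\otimes_{O_K}T\isomto\sigma_\ideal{a}^*(T)$, and then uses the \emph{cocycle condition} $l(\ideal{a}\ideal{b})=l(\ideal{a})\sigma_\ideal{a}(l(\ideal{b}))$ of the strengthened Tannaka theorem to normalise the $\nu_\ideal{a}$ into genuine Galois descent data $t_\sigma\colon T\isomto\sigma^*(T)$ for $O_K[\ideal{f}^{-1}]\to O_L[\ideal{f}^{-1}]$. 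Only after this descent does one know $T$ comes from a module $T_0$ over $O_K[\ideal{f}^{-1}]$, and then freeness follows because $L\supset H$ kills $\mrm{Pic}(O_K[\ideal{f}^{-1}])$ after base change.

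Relatedly, your last step misattributes the input: ``the pullback $\mrm{Pic}(O_K[1/\ideal{f}])\to\mrm{Pic}(O_L[1/\ideal{f}])$ is zero'' follows already from the \emph{classical} principal ideal theorem (since $L\supset H$), and the strengthened version is not used in your argument at all. But the whole point of the strengthened theorem is the extra multiplicativity $l(\ideal{a}\ideal{b})=l(\ideal{a})\sigma_\ideal{a}(l(\ideal{b}))$, which is exactly what is needed to turn the $\nu_\ideal{a}$ into a cocycle. Without invoking the Frobenius lift characterisation of Shimura type curves (Theorem 5.3) and this refined multiplicativity, there is no mechanism to produce the descent of $\omega$ to $O_K$, and the argument does not close.
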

	
	Note that if $\ideal{f}=(1)$ then the $\ideal{f}$-torsion is always rational over the Hilbert class field $H=K((1))$ and so we find that every elliptic curve over Shimura type over $H$ admits a global minimal model (moreover, such curves always exist). 
	
	The main result of \S 6 relies fundamentally on a certain principal ideal theorem which we prove in the appendix. Let $K$ be a number field and let $L/K$ be a wide ray class field. Write $\mathrm{Id}_{L/K}$ for the group of fractional ideals of $O_K$ generated by the primes which are unramified in the extension $L/K$.
	
	\begin{theo*} There exist elements $l(\ideal{a})\in L^\times$, indexed by the ideals $\ideal{a}$ prime to $\ideal{f}$, such that
		\begin{enumerate}[label=\textup{(\roman*)}]
			\item $l(\ideal{a})\cdot O_L=\ideal{a}\cdot O_L$ and
			\item $l(\ideal{a}\ideal{b})=l(\ideal{a})\sigma_{\ideal{a}}(l(\ideal{b}))$
		\end{enumerate} for $\ideal{a}, \ideal{b}$ prime to $\ideal{f}$ where $\sigma_{\ideal{a}}\in G(L/K)$ denotes the `Frobenius element' at $\ideal{a}$.
	\end{theo*}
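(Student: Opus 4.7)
The plan is to reformulate the conclusion as a lifting problem and then kill an explicit obstruction class. Condition (ii) is precisely the statement that the assignment $\ideal{a} \mapsto (l(\ideal{a}), \sigma_\ideal{a})$ defines a group homomorphism $\mathrm{Id}_{L/K} \to L^\times \rtimes G(L/K)$ lifting the Artin map, where $G(L/K)$ acts on $L^\times$ in the natural way. Equivalently, $l : \mathrm{Id}_{L/K} \to L^\times$ is a crossed homomorphism with $\mathrm{Id}_{L/K}$ acting through the Artin map, and condition (i) then says that $l$ reduces modulo $O_L^\times$ to the tautological homomorphism $\ideal{a} \mapsto \ideal{a} O_L$.

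First I would establish the underlying ``bare'' principal ideal theorem: $\ideal{a} O_L$ is principal for every $\ideal{a}$ prime to $\ideal{f}$. This is classical and follows from the Artin--Furtw\"angler principal ideal theorem applied to $K$ and the Hilbert class field, which is contained in $L$. Fixing arbitrary generators $l_0(\ideal{a}) \in L^\times$, the failure of (ii) is measured by
$$c(\ideal{a}, \ideal{b}) := \frac{l_0(\ideal{a})\, \sigma_\ideal{a}(l_0(\ideal{b}))}{l_0(\ideal{a}\ideal{b})} \in O_L^\times;$$
a direct check shows $c$ is a $2$-cocycle valued in $O_L^\times$ (action via Artin), and replacing $l_0$ by $l_0 \cdot u$ with $u : \mathrm{Id}_{L/K} \to O_L^\times$ shifts $c$ by $\delta u$. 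The existence of $l$ is therefore controlled by a cohomology class $[c] \in H^2(\mathrm{Id}_{L/K}, O_L^\times)$ which must be shown to vanish.

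To attack the vanishing I would use the Lyndon--Hochschild--Serre spectral sequence for the extension
$$1 \to P^\ideal{f} \to \mathrm{Id}_{L/K} \to G(L/K) \to 1,$$
where $P^\ideal{f}$ denotes the kernel of the Artin map, i.e.\ the principal ideals $(\alpha)$ with $\alpha \equiv 1 \pmod{\ideal{f}}$. The key observation is that on $P^\ideal{f}$ one has the canonical choice $\alpha \in K^\times \subset L^\times$, which is $G(L/K)$-invariant and trivially satisfies the cocycle condition, yielding a natural splitting of the problem over $P^\ideal{f}$ and reducing the obstruction to a class in $H^2(G(L/K), O_L^\times)$. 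This residual class I would then try to kill using Hilbert 90 together with the specific structure of $L$ as a \emph{wide} ray class field, which provides canonical idelic generators at each prime away from $\ideal{f}$.

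The main obstacle I expect is precisely this final vanishing. It is not a formal consequence of class field theory --- the analogous statement is false for general abelian extensions $L/K$ --- and constitutes the essential content of the ``strengthening'' of the classical principal ideal theorem alluded to in the introduction. A further technical point is the careful handling of the finite ambiguity from $O_K^\times \cap K^{\times,\ideal{f}}$, the units of $K$ congruent to $1 \bmod \ideal{f}$, whose presence would otherwise spoil the canonical splitting over $P^\ideal{f}$ and must be tracked explicitly through the spectral sequence.
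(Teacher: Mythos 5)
Your cohomological reformulation is accurate as far as it goes: condition (ii) does say $l$ is a crossed homomorphism $\mathrm{Id}_{L/K}\to L^\times$ (action via Artin), condition (i) fixes its reduction modulo $O_L^\times$, a choice of generators gives a $2$-cocycle $c$ valued in $O_L^\times$, and the problem becomes the vanishing of $[c]\in H^2(\mathrm{Id}_{L/K},O_L^\times)$. But you have not written a proof; you have written a reduction to the hard part and then acknowledged that you cannot do the hard part. The sentence ``This residual class I would then try to kill using Hilbert 90 together with the specific structure of $L$ as a wide ray class field'' is where the proposal stops being an argument. Hilbert 90 controls $H^1(G(L/K),L^\times)$, not $H^2(G(L/K),O_L^\times)$; the latter is in general nonzero for abelian extensions (it is closely tied to genus theory and capitulation), and there is no formal cohomological reason for the relevant class to die. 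You are right that this vanishing is the essential content of the theorem --- but that means the proposal, as written, has a genuine gap precisely at the essential step.

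The paper's proof takes an entirely different, constructive route and it is worth seeing why. It picks primes $\ideal{p}_1,\dots,\ideal{p}_r$ whose Frobenii generate $G(L/K)$ as a direct sum of cyclic groups $\langle\sigma_i\rangle$ with fixed fields $K_i$. For each $i$ it uses Hasse's Norm Theorem to write $l(\ideal{p}_i^{n_i})$ as a norm from $K_i$, then Noether's ``Theorem 90'' for ideals to produce $\ideal{b}_i$ with $\ideal{p}_i(\pi_i)^{-1}=\ideal{b}_i\sigma_i(\ideal{b}_i)^{-1}$, and then \emph{Terada's Norm Theorem} --- a refinement of Hilbert 90 with congruence conditions modulo the conductor --- to arrange $\pi_i=\alpha_i\beta_i\sigma_i(\beta_i)^{-1}$ with $\alpha_i\equiv 1\bmod\ideal{f}_{L/K_i/K}$. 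The crucial input that ``kills the obstruction'' is then \emph{Tannaka's Theorem 3}, a quantitative capitulation result: the product $\ideal{a}_1\cdots\ideal{a}_r$ of the resulting ideals is principal, generated by some $A\equiv 1\bmod\ideal{F}_{L/K}$. From $A$ and the $\alpha_i$ one writes down $\Theta_i=\alpha_i A\sigma_i(A)^{-1}$ and an explicit formula for $l(\ideal{a})$ using group-ring operators $w_{i,x_i}\in\Z[G(L/K)]$, and verifies (ii) by a case check. In your language, Terada's theorem and Tannaka's Theorem 3 are precisely what make the residual class in $H^2(G(L/K),O_L^\times)$ trivial --- and this is not a spectral-sequence computation but a deep capitulation theorem specific to ray class fields. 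Without importing that (or an equivalent), your outline does not close. Note also a smaller inaccuracy: passing from the vanishing of $[c]|_{P^\ideal{f}}$ to an obstruction in $H^2(G(L/K),O_L^\times)$ via LHS is not automatic --- you would also need to handle the $E_2^{1,1}$ term $H^1(G(L/K),H^1(P^\ideal{f},O_L^\times))$, which you do not address. The paper sidesteps all of this by working with an explicit $G$-module section rather than a spectral sequence.
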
 A version of this result was proven by Tanaka in \cite{Tannaka58} (and indeed our proof is heavily based on his and several other classical results from class field theory).

\section*{Acknowledgements} The author would like to thank James Borger for originally suggesting the topic of this article and for many fruitful discussions.

	\section*{Notation} Unless otherwise noted $K$ will denote an imaginary quadratic field, with fixed maximal abelian extension $K^\mathrm{ab}$ and algebraic closure $\overline{K}$.
	
	For a number field $L$, $O_L$ denotes its ring of integers, $\mathrm{Id}_{L}$ the group of fractional ideals, $\widehat{O}_L=O_L\otimes_{\Z}\widehat{\Z}$ the group of finite, integral adeles, and $I_L$ the group of (all) ideles. If $\ideal{P}$ is a prime of $L$ then $L_{\ideal{P}}$ and $O_{L_\ideal{P}}$ denote the $\ideal{P}$-adic completion of $L$ and $O_L$ respectively. Finally, if $\ideal{F}$ is an integral ideal of $L$ and $P\subset \mathrm{Id}_{L}$ is any subset, then $P^{\ideal{F}}$ denotes the subset of ideals of $P$ which are relatively prime to $\ideal{F}$.
	
	If $L/K$ is an abelian extension then we denote by $\mathrm{Id}_{L/K}$ the group of fractional ideals of $K$ generated by the primes which are unramified in $L/K$. We denote by \[\mathrm{Id}_{L/K}\longrightarrow G(L/K): \ideal{a}\mapsto \sigma_{\ideal{a}}\] the unique surjective homomorphism which sends a prime ideal $\ideal{p}\in \mathrm{Id}_{L/K}$ to the unique automorphism lifting $N\ideal{p}$-power Frobenius modulo $\ideal{p}$, we write $P_{L/K}\subset \mathrm{Id}_{L/K}$ for its kernel and for $\ideal{a}\in \mathrm{Id}_{L/K}$ we call $\sigma_{\ideal{a}}$ the Frobenius element at $\ideal{a}$.
	
	The ray class field of conductor $\ideal{f}$ is denoted $K(\ideal{f})$ and when $\ideal{f}=(1)$ we write $H=K((1))$ for the Hilbert class field of $K$. Finally, we denote by \[\theta_K: (\widehat{O}_K\otimes_{O_K}K)^\times \longrightarrow G(K^\mathrm{ab}/K)\] the reciprocity map of class field theory. Note that as $K$ is imaginary quadratic, this homomorphism is surjective with kernel $K^\times$ and its restriction to $\widehat{O}_K^\times\subset (\widehat{O}_K\otimes_{O_K}K)^\times$ induces a surjective map \[\theta_K: \widehat{O}_K^\times\longrightarrow G(K^\mathrm{ab}/H)\] with kernel $O_K^\times.$ We will also use the symbol $\theta_K$ to denote the induced isomorphisms between the corresponding quotient group and the relevant Galois group.

	\section{Isogenies between elliptic curves with complex multiplication}
	
	\subsection{} Let $S$ be an $O_K$-scheme. An elliptic curve $E$ over $S$ is a smooth, proper, geometrically connected $S$-group scheme of relative dimension one. The tangent space at the identity is denoted $\Lie_{E/S}$ (and is a locally free $\mathscr{O}_S$-module of rank one). An elliptic curve with complex multiplication by $O_K$ over $S$ is an elliptic curve $E/S$ equipped with a homomorphism \[O_K\longrightarrow \End_S(E): a\mapsto [a]_E\] such that the induced action of $[a]_E$ on $\Lie_{E/S}$ coincides with the action of $a$ coming from the structure map $S\longrightarrow \Spec(O_K)$. We also call these curves simply `CM elliptic curves'.
	
	\subsection{} We now recall a construction of Serre (Chapter XIII of \cite{CasselsFrohlich67}, also \cite{ASENS_1969_4_2_4_521_0}). Let $E/S$ be a CM elliptic curve. For each $S$-scheme $S'$ the group $\Hom_S(S', E)=E(S')$ is an $O_K$-module and so given a rank one projective $O_K$-module $M$, we may define a functor on the category of $S$-schemes via \[M\otimes_{O_K}E: S'/S\mapsto M\otimes_{O_K} E(S').\]
	
	\begin{prop}[Serre]\label{prop:serre-tensor} The functor $M\otimes_{O_K} E$ is representable by a \textup{CM} elliptic curve over $S$.
	\end{prop}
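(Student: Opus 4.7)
The plan is to reduce to the case where $M$ is isomorphic to an integral ideal of $O_K$ and then exhibit the representing elliptic curve as an explicit quotient of $E$ by a suitable finite flat subgroup scheme. Since $O_K$ is a Dedekind domain, every rank one projective $O_K$-module is isomorphic to a nonzero fractional ideal; after multiplying by an element of $K^\times$, we may assume $M \cong \ideal{a}^{-1}$ for some nonzero integral ideal $\ideal{a} \subset O_K$. The inverse here is a convention chosen so that the desired representing object $M\otimes_{O_K}E$ naturally receives a map from $E = O_K \otimes_{O_K} E$, and so is built as a quotient rather than an extension.

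I would then construct the candidate representing object as follows. Define $E[\ideal{a}] \subset E$ to be the scheme-theoretic intersection $\bigcap_{a \in \ideal{a}} \ker([a]_E)$; equivalently, for any set of generators $a_1, \dots, a_n$ of $\ideal{a}$, it is the kernel of the morphism $E \to E^n$ with components $[a_i]_E$. Using that $\ideal{a}$ is locally principal on $\Spec(O_K)$ and that $S$ is an $O_K$-scheme, one checks that $E[\ideal{a}]$ is a finite locally free $S$-subgroup scheme of $E$ of order $N\ideal{a}$. Hence the fppf quotient $E^{\ideal{a}} := E/E[\ideal{a}]$ exists as an elliptic curve over $S$; the $O_K$-action on $E$ preserves $E[\ideal{a}]$ and descends to $E^{\ideal{a}}$, making the latter a CM elliptic curve and the canonical projection $\pi \colon E \to E^{\ideal{a}}$ an $O_K$-linear isogeny.

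To identify $E^{\ideal{a}}$ with $\ideal{a}^{-1} \otimes_{O_K} E$ I would construct a natural transformation sending $\alpha \otimes x \in \ideal{a}^{-1}\otimes_{O_K}E(S')$ to $\pi([a\alpha]_E(x)) \in E^{\ideal{a}}(S')$ for any $a\in\ideal{a}$ with $a\alpha\in O_K$; independence of the choice of $a$ is exactly the statement that $\pi$ kills $E[\ideal{a}]$. To show this is an isomorphism of fppf sheaves one argues locally on $\Spec(O_K)$: once $\ideal{a}$ becomes principal, say $\ideal{a}=(c)$, the construction reduces to the tautology that $c^{-1}O_K \otimes_{O_K} E = E$, while the isogeny $\pi$ becomes $[c]_E$ with kernel $E[c] = E[\ideal{a}]$, and both sides are identified with $E$ via multiplication by $c$.

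The principal obstacle is the technical verification that $E[\ideal{a}]$ is finite locally free over $S$ when $\ideal{a}$ is not principal; everything else amounts to bookkeeping once this is settled. In particular, both the existence of the quotient elliptic curve $E^{\ideal{a}}$ and the final Yoneda identification then follow by reducing, locally on $\Spec(O_K)$, to the trivial case of a free rank one module.
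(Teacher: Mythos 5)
Your strategy is a genuinely different route from the paper's. You realize $M\otimes_{O_K}E$ as a \emph{quotient} $E/E[\ideal{a}]$, whereas the paper writes $O_K^2\twoheadrightarrow M$, splits it using projectivity, and realizes $M\otimes_{O_K}E$ as the kernel of the induced idempotent on $E^2$. The paper's route buys you representability, smoothness, properness, geometric connectedness and the Lie algebra computation essentially for free, since all of these pass to direct factors; it requires no input on quotients of abelian schemes and no independent verification that $E[\ideal{a}]$ is finite locally free. Your route is more explicit but front-loads exactly those two technical inputs.

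The concrete problem is in the step you yourself flag as the ``principal obstacle,'' and the sketch you give for it does not work. You propose to check that $E[\ideal{a}]$ is finite locally free of order $N\ideal{a}$ by passing to an open of $\Spec(O_K)$ where $\ideal{a}$ becomes principal, ``$\ideal{a}=(c)$,'' and identifying $E[\ideal{a}]$ there with $\ker([c]_E)$. But there is no way to choose such a $c$ so that this identification holds. If $c\in O_K\cap\ideal{a}$ is a global element generating $\ideal{a}$ near a chosen prime, then $(c)=\ideal{a}\ideal{b}$ with $\ideal{b}\neq O_K$ whenever $\ideal{a}$ is non-principal, and $\ker([c]_E)$ is finite locally free of order $N(c)=N\ideal{a}\cdot N\ideal{b}>N\ideal{a}$ over all of $S$. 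Restricting $S$ to the preimage of $\Spec(O_K[\ideal{b}^{-1}])$ does not shrink $\ker([c]_E)$: the $\ideal{b}$-part of the kernel lives in $E$ over $S$, not over the fibres of $S$ above $V(\ideal{b})$. (For instance, with $S=\Spec(L)$ for a number field $L$, $S$ already lies over the generic point of $\Spec(O_K)$, so restricting to an open of $\Spec(O_K)$ does nothing, yet $\ker([c]_E)$ still has order $N(c)>N\ideal{a}$.) Alternatively, a genuine local generator $c\in O_{K,\ideal{p}}\setminus O_K$ does not act on $E$ at all, since the CM structure is a homomorphism $O_K\to\End_S(E)$ and isogenies are not invertible, so the action does not extend to a localization of $O_K$. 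So the identification $E[c]=E[\ideal{a}]$ fails, and your verification of finite local freeness (and hence the existence of the fppf quotient and the final Yoneda identification, both of which lean on the same local reduction) is not established. A correct independent proof that $E[\ideal{a}]$ is finite locally free of order $N\ideal{a}$ is possible --- for example by embedding $E[\ideal{a}]$ into $E[n]$ for $n\in\ideal{a}\cap\Z_{>0}$ and analysing the $O_K/n$-module scheme structure --- but it is more work than you suggest, and the paper's idempotent construction sidesteps the issue entirely (indeed, in the paper the fact that $E[\ideal{a}]$ is finite locally free of order $N\ideal{a}$ is deduced \emph{afterwards}, as a corollary of Serre's construction, not used as an input to it).
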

	\begin{proof} Every rank one projective $O_K$-module $M$ can be generated by a pair of elements and so there exists a surjective homomorphism $O_K^2\longrightarrow M$. Since $M$ is projective, we can split this homomorphism and realise $M$ as the kernel of an idempotent endomorphism $f_M: O_K^2\longrightarrow O_K^2$.
		
		The endomorphism $f_M$ induces an idempotent endomorphism of group schemes $f_M: E^2\longrightarrow E^2$ and an isomorphism (of functors on $S$-schemes): \[M\otimes_{O_K} E\isomto \ker(f_M: E^2\longrightarrow E^2).\] It follows that $M\otimes_{O_K} E$ is representable by a unique group scheme over $S$ with which we now identify it.
		
		As $f_M: E^2\longrightarrow E^2$ is idempotent, $M\otimes_{O_K} E$ is a direct factor of the smooth, proper and geometrically connected group scheme $E^2$. Hence, $M\otimes_{O_K} E$ is itself smooth, proper and geometrically connected.
		
		Finally, the additivity of functor $\Lie$ applied to the left split exact sequence \[0\longrightarrow M\otimes_{O_K} E\longrightarrow E^2\longrightarrow E^2\] induces a canonical isomorphism \[\Lie_{M\otimes_{O_K} E/S}\isomto M\otimes_{O_K}\Lie_{E/S}\] from which it follows that $M\otimes_{O_K} E$ is of both of relative dimension one and a CM elliptic curve, i.e. the induced action of $O_K$ on $\Lie_{M\otimes_{O_K} E/S}$ is via the structure map $S\longrightarrow \Spec(O_K)$.
	\end{proof}
	
	We identify the functor $M\otimes_{O_K}E$ with the representing CM elliptic curve. 
	\begin{rema} We also make a few remarks about this construction.
		\begin{enumerate}[label=\textup{(\roman*)}]
			\item In the proof of (\ref{prop:serre-tensor}) it is shown more generally that if $G/S$ is any group scheme equipped with an action of $O_K$ and $M$ is a rank one projective $O_K$-module then the functor $M\otimes_{O_K} G$ is again representable by a group scheme over $S$ and inherits any properties possessed by direct factors of the group scheme $G^2$, e.g. affine, flat, finite locally free, \'etale and so on. We will use this from time to time when $G$ is a torsion sub-group of $E$ or when $G$ is the N\'eron model of an elliptic curve.
			
			\item The additivity of the Hom functor also shows that, for a pair of CM elliptic curves $E$ and $E'$ over $S$ and a rank one projective $O_K$-module $M$, the natural map \[M\otimes_{O_K}\Hom^{O_K}_S(E, E')\isomto \Hom_S^{O_K}(E, M\otimes_{O_K} E')\] is bijective.
		\end{enumerate}
	\end{rema}

	\subsection{} We now apply Serre's construction in the special case where $M=\ideal{a}^{-1}$ for a non-zero integral ideal $\ideal{a}\subset O_K$ to obtain a CM elliptic curve $\ideal{a}^{-1}\otimes_{O_K} E$.  There is a natural homomorphism \[i_\ideal{a}: E\longrightarrow \ideal{a}^{-1}\otimes_{O_K} E\] induced by the inclusion $O_K\longrightarrow \ideal{a}^{-1}$ whose kernel we denote by $E[\ideal{a}]$. The sub-group scheme $E[\ideal{a}]$ is the $\ideal{a}$-torsion of $E$: the $S'$-valued points of $E[\ideal{a}]$ is \[E[\ideal{a}](S')=\{x\in E(S'): [a]_E(x)=0 \text{ for all } a\in \ideal{a}\}.\] If $\ideal{a}=(a)$ is principal then $E[\ideal{a}]=\ker([a]_E).$
-	
	\begin{prop} The homomorphism $i_\ideal{a}: E\longrightarrow \ideal{a}^{-1}\otimes_{O_K}E$ is finite locally free of degree $N\ideal{a}$ and is \'etale if and only if $\ideal{a}$ is invertible on $S$.
	\end{prop}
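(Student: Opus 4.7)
The plan is to dispatch the principal case directly and then reduce the general case to it by factoring through a principal ideal.

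If $\ideal{a}=(a)$ is principal, the canonical isomorphism $(a)^{-1}\otimes_{O_K}E\isomto E$, $a^{-1}\otimes x\mapsto x$, identifies $i_\ideal{a}$ with $[a]_E:E\to E$. For a \textup{CM} elliptic curve the composition $[a]_E\circ[\bar a]_E=[N\ideal{a}]_E$ has degree $(N\ideal{a})^2$, and the two factors have the same degree, so $\deg[a]_E=N\ideal{a}$; the map $[a]_E$ is \'etale iff $a\in\mathscr{O}_S^\times$, equivalently $\ideal{a}\cdot\mathscr{O}_S=\mathscr{O}_S$.

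For general non-zero $\ideal{a}$, I would pick $0\neq c\in\ideal{a}$, write $(c)=\ideal{a}\ideal{b}$, and exploit the tower $O_K\subset\ideal{a}^{-1}\subset(c)^{-1}$ to produce a factorisation
\[E\xrightarrow{i_\ideal{a}}\ideal{a}^{-1}\otimes_{O_K}E\xrightarrow{j}(c)^{-1}\otimes_{O_K}E\cong E\]
whose composition is $[c]_E$, a finite locally free map of degree $N\ideal{a}\cdot N\ideal{b}$ by the principal case. Fibrewise at each $s\in S$, $i_{\ideal{a},s}$ cannot vanish (else the composition would), so it is a non-zero homomorphism between elliptic curves over $k(s)$, hence an isogeny. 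As an $S$-homomorphism of abelian schemes with fibrewise finite kernel, $i_\ideal{a}$ is therefore finite locally free.

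To pin down the degree as $N\ideal{a}$, I would identify $\ker(i_\ideal{a})$ with the $\ideal{a}$-torsion subscheme $E[\ideal{a}]=\{x\in E:[\alpha]_Ex=0\text{ for all }\alpha\in\ideal{a}\}$ and show the latter is finite locally free of rank $N\ideal{a}$. The Chinese Remainder decomposition splits $E[\ideal{a}]=\prod E[\ideal{p}_i^{e_i}]$, reducing to $\ideal{a}=\ideal{p}^e$; choosing $h$ with $\ideal{p}^h=(\pi)$ principal, $E[\ideal{p}^e]$ sits as a closed subgroup scheme of the finite locally free $E[\ideal{p}^h]=\ker[\pi]_E$ of rank $N\ideal{p}^h$. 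Fibrewise, $E[\ideal{p}^h]$ is a free $O_K/\ideal{p}^h$-module of rank one (analytically transparent over $\mathbb{C}$, and in positive residue characteristic a consequence of the \textup{CM} structure on the Tate or Dieudonn\'e module), whence $E[\ideal{p}^e]$ has fibrewise rank $N\ideal{p}^e$ and is finite locally free of this rank over $S$.

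The main obstacle is this fibrewise computation of $E[\ideal{p}^h]$ in residue characteristics below $\ideal{p}$, which invokes the local classification of finite flat group schemes attached to \textup{CM} elliptic curves. The \'etale criterion then follows cleanly: $i_\ideal{a}$ is \'etale iff $E[\ideal{a}]=\prod E[\ideal{p}_i^{e_i}]$ is, iff every $\ideal{p}_i$ is invertible in each residue field of $S$, iff $\ideal{a}\cdot\mathscr{O}_S=\mathscr{O}_S$.
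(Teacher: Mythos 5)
Your proposal is a genuinely different route from the paper's: you handle the principal case via the norm identity $[a]_E[\bar a]_E=[N(a)]_E$, then pass to general $\ideal{a}$ by factoring a principal multiple and computing $\ker(i_\ideal{a})=E[\ideal{a}]$ fibrewise using Tate/Dieudonn\'e modules. The paper instead exploits the functoriality of the Serre construction to get $\deg(i_{\ideal{a}\ideal{b}})=\deg(i_\ideal{a})\deg(i_\ideal{b})$ directly, reduces to primes, and in the split case $\ideal{p}\neq\overline{\ideal{p}}$ uses the elementary observation that $N\ideal{p}$ is a rational prime and both $\deg(i_\ideal{p}), \deg(i_{\overline{\ideal{p}}})>1$, so each must equal $N\ideal{p}$. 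This sidesteps both the Dieudonn\'e-module classification you flag as your ``main obstacle'' and any appeal to the Rosati involution. For the \'etale criterion, the paper argues in one line by looking at $\Lie(i_\ideal{a})$, which is simply the inclusion $O_K\hookrightarrow\ideal{a}^{-1}$ tensored with $\Lie_{E/S}$; your CRT route is workable but still has to explain why each $E[\ideal{p}^e]$ is \'etale exactly when $\ideal{p}$ is invertible, which brings you back to the Lie-algebra argument anyway.

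There is also a specific gap in your principal case: the assertion that ``the two factors have the same degree'' (i.e.\ $\deg[a]_E=\deg[\bar a]_E$) is not self-evident. It amounts to knowing that the dual of $[a]_E$ is $[\bar a]_E$, equivalently that the Rosati involution restricts to complex conjugation on $O_K$ --- a true but nontrivial fact about CM elliptic curves, and one the paper deliberately avoids by only ever comparing $\deg(i_\ideal{p})$ with $\deg(i_{\overline{\ideal{p}}})$ at a split prime, where primality of $N\ideal{p}$ forces the conclusion without any duality input. Either justify the duality claim (or compute $\deg[a]_E$ directly over $\C$ and spread out using local constancy of the degree), or restructure along the paper's lines to avoid needing it.
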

	\begin{proof} The homomorphism $i_\ideal{a}$ is \'etale if and only if the induced map \[\Lie_{E/S}\longrightarrow \Lie_{\ideal{a}^{-1}\otimes_{O_K} E}=\ideal{a}^{-1}\otimes_{O_K}\Lie_{E/S}\] is an isomorphism. This map is induced by the inclusion $O_K\longrightarrow \ideal{a}^{-1}$ and is therefore an isomorphism if and only if $\ideal{a}$ is invertible on $S$.
		
		Regarding the degree of $i_\ideal{a}$, by rigidity we may decompose $S$ into a disjoint union of schemes over which $i_\ideal{a}$ is either the zero morphism or finite locally free of constant degree. As $\ideal{a}$ is non-zero $i_\ideal{a}$ cannot be the zero morphism and is therefore finite locally free of constant degree. This degree is one if and only if $\ideal{a}=O_K$ in which case the claim is clear. Therefore, we may assume that $i_\ideal{a}$ is finite locally free of constant degree greater than one.
		
		Given another (non-zero) ideal $\ideal{b}$ of $O_K$, the exactness of the functor $\ideal{b}^{-1}\otimes_{O_K}-$ implies that the kernel of \[\ideal{b}^{-1}\otimes_{O_K} i_\ideal{a}: \ideal{b}^{-1}\otimes_{O_K} E\longrightarrow \ideal{b}^{-1}\otimes_{O_K} \ideal{a}^{-1}\otimes_{O_K} E=(\ideal{a}\ideal{b})^{-1}\otimes_{O_K} E\] is equal to $\ideal{b}^{-1}\otimes_{O_K} E[\ideal{a}]$. Choosing an isomorphism $\ideal{b}^{-1}\otimes_{O_K}(O_K/\ideal{a})\isomto O_K/\ideal{a}$ we also obtain an isomorphism \[\ideal{b}^{-1}\otimes_{O_K}E[\ideal{a}]\isomto E[\ideal{a}]\] and it follows that $\deg(i_\ideal{a}\otimes_{O_K}\ideal{b}^{-1})=\deg(i_\ideal{a})$ and so \[\deg(i_{\ideal{a}\ideal{b}})=\deg((i_\ideal{b}\otimes_{O_K}\ideal{b}^{-1})\circ i_\ideal{b})=\deg(i_\ideal{a})\deg(i_\ideal{b}).\] As $N{\ideal{a}\ideal{b}}=N\ideal{a} N\ideal{b}$ and $\deg(i_{\ideal{a}\ideal{b}})=\deg(i_\ideal{a})\deg(i_\ideal{b})$ it is enough to show that $i_\ideal{p}$ has degree $N\ideal{p}$ whenever $\ideal{p}$ is a non-zero prime ideal. If $\overline{\ideal{p}}$ denotes the complex conjugate of $\ideal{p}$ then \[\deg(i_\ideal{p})\deg(i_{\overline{\ideal{p}}})=\deg(i_{\ideal{p}\overline{\ideal{p}}})=\deg([N\ideal{p}]_E)=N\ideal{p}^2.\] Therefore, if $\ideal{p}=\overline{\ideal{p}}$ then $\deg(i_\ideal{p})^2=N\ideal{p}^2$ and we must have $\deg(i_\ideal{p})=N\ideal{p}$. On the other hand if $\ideal{p}\neq \overline{\ideal{p}}$ then $N\ideal{p}$ is prime and as both $\deg(i_\ideal{p})$ and $\deg(i_{\overline{\ideal{p}}})$ are greater than one we must also have $\deg(i_\ideal{p})=N\ideal{p}$.
	\end{proof}
	
	\begin{prop}\label{prop:cm-subgroups} Let $L/K$ be a finite extension, let $S$ be either $\Spec(L)$ or an open subset of $\Spec(O_L)$ and let $E/S$ be a \textup{CM} elliptic curve. The only finite locally free sub-group schemes of $E$ which are stable under the action of $O_K$ are those of the form $E[\ideal{a}]$ for $\ideal{a}\subset O_K$.
	\end{prop}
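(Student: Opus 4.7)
The plan is to first reduce to the case $S=\Spec(L)$ and then classify the $O_K$-stable finite subgroups via the structure of $E[n]$ as an $O_K$-module. For the reduction, let $G\subset E$ be a finite locally free closed subgroup scheme stable under $O_K$. Since $S$ is integral and $G$ is $S$-flat, $G$ equals the scheme-theoretic closure of its generic fibre $G_L\subset E_L$; the analogous statement holds for $E[\ideal{a}]$, which is finite locally free over $S$ as the kernel of the map $i_\ideal{a}$ of the previous proposition. Hence if we can show $G_L=E_L[\ideal{a}]$ for some $\ideal{a}$, then $G=E[\ideal{a}]$ holds over $S$, and we are reduced to the case $S=\Spec(L)$.

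So assume $S=\Spec(L)$. Because $G$ is finite over a field it is annihilated by some integer $n\geq 1$, so $G\subset E[n]$. Since $L$ has characteristic zero, $E[n]$ is \'etale, and the $O_K$-stable $L$-subgroup schemes of $E[n]$ correspond bijectively to Galois-stable $O_K/(n)$-submodules of $E[n](\overline{L})$. The key claim is that $E[n](\overline{L})$ is free of rank one as an $O_K/(n)$-module, and for this it suffices to show that the Tate module $T_\ell E$ is free of rank one over $O_K\otimes_{\Z}\Z_\ell$ for every prime $\ell$. The rational Tate module $V_\ell E=T_\ell E\otimes\Q_\ell$ has $\Q_\ell$-dimension two and carries a faithful action of $K\otimes\Q_\ell$ (faithfulness because a nonzero endomorphism $[\alpha]_E$ has finite kernel). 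Since $K\otimes\Q_\ell$ is a product of fields of total $\Q_\ell$-dimension two, dimension counting forces $V_\ell E$ to be free of rank one over $K\otimes\Q_\ell$. The ring $O_K\otimes\Z_\ell$ is a product of discrete valuation rings, and $T_\ell E$ is a finitely generated torsion-free module over it, hence free; comparing $\Z_\ell$-ranks then forces the rank to be one.

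Once $E[n](\overline{L})\cong O_K/(n)$ is established, its $O_K$-submodules are precisely the ideals $\ideal{a}/(n)$ with $(n)\subset\ideal{a}$, and each such submodule is automatically Galois-stable since every Galois automorphism acts as an $O_K/(n)$-module automorphism of $O_K/(n)$ and hence preserves ideals. Setting $\ideal{b}=(n)\ideal{a}^{-1}$, which is integral because $\ideal{a}\supset(n)$, a direct computation gives $\{x\in O_K/(n):\ideal{b}x=0\}=(n)\ideal{b}^{-1}/(n)=\ideal{a}/(n)$, identifying this submodule with $E[\ideal{b}](\overline{L})$, so that $G=E[\ideal{b}]$. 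The main obstacle I expect is the structural claim that $T_\ell E$ is free of rank one over $O_K\otimes\Z_\ell$, which must be verified uniformly across the split, inert, and ramified primes $\ell$ where the shape of $O_K\otimes\Z_\ell$ changes.
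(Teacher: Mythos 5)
Your proof is correct, and in both steps it takes a genuinely different route from the paper's. For the reduction from $S\subset\Spec(O_L)$ to the field case, you observe that a finite locally free (hence flat) closed subgroup scheme over a Dedekind base is the schematic closure of its generic fibre, so that both $G$ and $E[\ideal{a}]$ are recovered from their restrictions to $\Spec(L)$; the paper instead invokes Corollary 1.3.5 of Katz--Mazur, which says that the locus in $S$ where two finite locally free subgroup schemes of the same degree coincide is closed, and then uses density of the generic point. These are essentially equivalent in strength here, but your formulation is a bit more self-contained and avoids needing to first compare degrees. Over $\Spec(L)$, you establish that $E[n](\overline{L})\cong O_K/(n)$ by an $\ell$-adic argument: $V_\ell E$ is a faithful $2$-dimensional representation of $K\otimes\Q_\ell$, hence free of rank one, and $T_\ell E$ is finitely generated torsion-free over the product of discrete valuation rings $O_K\otimes\Z_\ell$, hence free of rank one over each factor. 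The paper instead passes to $\C$ and uses the standard analytic fact that $E(\overline{K})_\mathrm{tors}\cong K/O_K$ as an $O_K$-module, whose finite $O_K$-submodules are exactly the $\ideal{a}$-torsion subgroups $\ideal{a}^{-1}/O_K$. Your Tate-module argument is purely algebraic and thus marginally more portable; the paper's is shorter. One minor point of presentation: after asserting that $T_\ell E$ is free over the product of DVRs, you need the rank-one statement \emph{on each factor}, which is exactly what the freeness of $V_\ell E$ over $K\otimes\Q_\ell$ buys you in the split case; it would be worth making that dependence explicit rather than phrasing it as a single ``comparing $\Z_\ell$-ranks'' step.
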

	\begin{proof} If $S=\Spec(L)$ then this follows immediately after noting that $E_\mathrm{tors}$ is an ind-finite locally free scheme over $\Spec(L)$ which, after base change to an algebraic closure of $L$, is isomorphic as an $O_K$-module group scheme to the constant $O_K$-module group scheme associated to $K/O_K$ whose only finite $O_K$-sub-modules are given by $\ideal{a}$-torsion $\ideal{a}^{-1}/K\subset K/O_K$ for $\ideal{a}\subset O_K.$
		
		If $S\subset \Spec(O_L)$ is an open sub-scheme and $C\subset E$ a finite locally free subgroup scheme stable under $O_K$ then $C\times_{S}\Spec(L)=(E\times_{S}\Spec(L))[\ideal{a}]$ for some ideal $\ideal{a}$ by the above. Therefore, the degree of $C$ is equal to the degree of $E[\ideal{a}]$ and by Corollary 1.3.5 of \cite{KatzMazur85} there is a unique maximal closed sub-scheme $Z\subset S$ over which $C$ and $E[\ideal{a}]$ are equal. Since $C$ and $E[\ideal{a}]$ are equal over the generic fibre $\Spec(L)\longrightarrow S$ and $Z\subset S$ is closed it follows that $Z=S$ and hence that $C=E[\ideal{a}].$
	\end{proof}
	
	\section{Classification of elliptic curves with complex multiplication}
	
	For details regarding the content of this section see Chapter 1 of \cite{Serre} and Chapter 2 of \cite{Gross1980}.
	
	\subsection{} Let $K\subset L\subset \overline{K}$ be a finite extension with maximal abelian extension $L^\mathrm{ab}\subset \overline{K}$ and let $E/L$ be a CM elliptic curve. The $O_K$-module $E(\overline{K})_\mathrm{tors}$ is isomorphic to $K/O_K$ and $G(\overline{K}/L)$ acts on this module via a character \[\rho_{E/L}: G(L^\mrm{ab}/L)\longrightarrow \widehat{O}_K^\times=\mathrm{Aut}_{O_K}(K/O_K).\] It is customary to classify CM elliptic curves via their Hecke characters, however it is conceptually simpler to instead use their ad\`elic representations $\rho_{E/L}$ directly, keeping in mind that those which appear as such satisfy a certain special property (see (\ref{eqn:commutative}) below). Indeed, this property is exactly what allows one to convert $\rho_{E/L}$ into the associated algebraic Hecke character $\psi_{E/L}$ as we know explain.
	
	Write \[\widetilde{N}_{L/K}: I_L\longrightarrow (\widehat{O}_K\otimes_{O_K} K)^\times\] for the composition of the norm $N_{L/K}: I_L\longrightarrow I_K$ with the projection $I_K\longrightarrow (\widehat{O}_K\otimes_{O_K} K)^\times$ which forgets the archimedian factor. The algebraic Hecke character (see Theorem 10 of \cite{SerreTate68} for an alternative definition) associated to $E/L$ is \[\psi_{E/L}:=\rho_{E/L}^{-1}\cdot \widetilde{N}_{L/K}: I_L\longrightarrow K^\times\subset (\widehat{O}_K\otimes_{O_K}K)^\times.\] A priori $\psi_{E/L}$ takes values in $(\widehat{O}_K\otimes_{O_K}K)^\times$, however it can be shown to take values in $K^\times$ (as follows from Theorem 11 of \cite{SerreTate68}, for example) which is equivalent to the ad\`elic representation $\rho_{E/L}$ having the property that the following diagram commutes: \begin{equation}\begin{gathered}\xymatrix{G(L^\mrm{ab}/L)\ar[r]^-{\rho_{E/L}} \ar[dr]_{\mathrm{res}}&\widehat{O}_K^\times\ar[d]^{\theta_K}\\ & G(K^\mrm{ab}/K)
	}\label{eqn:commutative}\end{gathered}\end{equation} where the diagonal arrow is the restriction map. This implies that the extension $K\subset L$ contains the Hilbert class field $K\subset H\subset \overline{K}$. Moreover, there always exist CM elliptic curves defined over $H$ (see Chapter I of \cite{Serre}).
	
	\subsection{} Now fix an embedding $K\subset \overline{K}\longrightarrow \C$ and let $E/\C$ be a CM elliptic curve. By GAGA the functor $E\mapsto E^\mrm{an}$, sending $E$ to its analytification, is an equivalence of categories between CM elliptic curves over $\C$ and complex tori of dimension one together with an action of $O_K$, which acts through the inclusion $O_K\subset \C$ on the tangent space at the identity.
	
	The exponential map \[\Lie_{E^\mrm{an}}\longrightarrow E^\mrm{an}\] is holomorphic, surjective and $O_K$-linear, its kernel $T_{O_K}(E)$ is a rank one projective $O_K$-module and the functor $E\mapsto T_{O_K}(E)$ from the category of CM elliptic curves over $\Spec(\C)$ to the category of rank one projective $O_K$-modules is an equivalence.
	
	We denote by $CL_K$ the class group of $K$ and we denote by $[M]\in CL_K$ the class of a rank one projective $O_K$-module $M$. If $L\subset \overline{K}$ is a finite extension of $K$ and $E/L$ is a CM elliptic curve we write \[c_{E/L}:=[T_{O_K}(E_{\C}^\mrm{an})]\in \mrm{CL}_{K}\] where $E_\C=E\times_{\Spec(L)}\Spec(\C)$.
	
	\subsection{} We record the following properties of $\rho_{E/L}$ and $c_{E/L}$:
	\begin{enumerate}[label=\textup{(\roman*)}]
		\item If $\chi: G(L^\mrm{ab}/L)\longrightarrow \Aut_L^{O_K}(E)=O_K^\times$ is a character and $E^\chi$ is the twist of $E$ by $\chi$ then \[\rho_{E^\chi/L}=\chi\cdot \rho_{E/L} \quad \text{ and } \quad c_{E^\chi/L}=c_{E/L}.\]
		\item If $\sigma\in G(L/K)$ then \[\rho_{\sigma^*(E)/L}=\rho_{E/L}^\sigma \quad \text{ and } \quad c_{\sigma^*(E)/L}=[\ideal{a}]^{-1}c_{E/L}\] where \[\rho_{E/L}^\sigma(-)=\rho_{E/L}(\widetilde{\sigma}\circ - \circ \widetilde{\sigma}^{-1})\] for any extension of $\sigma\in G(L/K)$ to $\widetilde{\sigma}\in G(L^\mrm{ab}/K)$ and $\sigma|_H=\sigma_{\ideal{a}}$.
		\item If $\ideal{a}$ is a fractional ideal of $K$ then \[\rho_{\ideal{a}\otimes_{O_K} E/L}=\rho_{E/L} \quad \text{ and } \quad c_{\ideal{a}\otimes_{O_K} E/L}=[\ideal{a}]c_{E/L}.\]
	\end{enumerate}
	
	\begin{prop}\label{theo:classification} The assignment \[E/L\mapsto (\rho_{E/L}, c_{E/L})\] from isomorphism classes of \textup{CM} elliptic curves over $L$ to the set of pairs $(\rho, c)$ where
	\begin{enumerate}[label=\textup{(\roman*)}]
				\item $\rho: G(L^\mrm{ab}/L)\longrightarrow \widehat{O}_K^\times$ is a continuous character such that \[\xymatrix{G(L^\mrm{ab}/L)\ar[r]^-{\rho}\ar[dr]_{\mathrm{res}}&\widehat{O}_K^\times\ar[d]^{\theta_K}\\ & G(K^\mrm{ab}/K)
		}\] commutes, and
	\item $c\in CL_K$
	\end{enumerate}
	 is bijective. Moreover, if $E$ and $E'$ are a pair of \textup{CM} elliptic curves over $L$ then $E$ and $E'$ are isogenous if and only if $\rho_{E/L}=\rho_{E'/L}$.
	\end{prop}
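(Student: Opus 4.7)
The plan is to first establish the ``moreover'' clause—namely that two CM elliptic curves over $L$ are isogenous if and only if they have the same $\rho$—and then deduce injectivity using Proposition~\ref{prop:cm-subgroups} and Serre's tensor construction. Surjectivity is handled separately by starting from some CM elliptic curve over $H$ (whose existence is recalled above) and correcting the invariants by twisting and by Serre tensoring.

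For the isogeny criterion, any $L$-isogeny plainly induces a Galois-equivariant isomorphism of rational Tate modules, giving $\rho_{E/L}=\rho_{E'/L}$. For the converse, analytifying over $\C$ realises $E_\C$ and $E'_\C$ as quotients of rank one projective $O_K$-modules, and any two such lattices are isogenous over $\C$; hence there exists an $O_K$-linear isogeny $\phi\colon E_{\overline{K}}\to E'_{\overline{K}}$. Since $\Hom^{O_K}_{\overline{K}}(E,E')\otimes_{O_K}K$ is one-dimensional over $K$, the action of $G(\overline{K}/L)$ on it is by a character $\chi\colon G(\overline{K}/L)\to K^\times$ with $\sigma(\phi)=\chi(\sigma)\phi$. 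Comparing both sides of $\sigma(\phi(P))=\sigma(\phi)(\sigma(P))$ for a torsion point $P$, and invoking the definitions of $\rho_{E/L}$ and $\rho_{E'/L}$ on torsion, yields $\chi=\rho_{E'/L}\rho_{E/L}^{-1}$. Under the hypothesis $\rho_{E/L}=\rho_{E'/L}$ the character $\chi$ is trivial, so $\phi$ descends to an $L$-isogeny.

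For injectivity, equal invariants produce, by the above, an $O_K$-linear isogeny $\phi\colon E\to E'$ over $L$; its kernel is a finite locally free $O_K$-stable sub-group scheme, hence equals $E[\ideal{a}]$ for some ideal $\ideal{a}$ by Proposition~\ref{prop:cm-subgroups}, giving an isomorphism $E'\cong \ideal{a}^{-1}\otimes_{O_K}E$. Property~(iii) of the preceding subsection then forces $c_{E'/L}=[\ideal{a}]^{-1}c_{E/L}$, so $[\ideal{a}]=1$ and $\ideal{a}=(\alpha)$; multiplication by $\alpha^{-1}$ is an $O_K$-linear isomorphism $O_K\isomto \ideal{a}^{-1}$ which induces $E\isomto \ideal{a}^{-1}\otimes_{O_K}E\cong E'$. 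For surjectivity, fix a CM elliptic curve $E_0/H$ and let $(\rho_0,c_0)$ be the invariants of its base change to $L$. Both $\theta_K\circ\rho$ and $\theta_K\circ\rho_0$ agree with the restriction map $G(L^{\mathrm{ab}}/L)\to G(K^{\mathrm{ab}}/H)$, so $\chi:=\rho\cdot\rho_0^{-1}$ has image in $\ker(\theta_K|_{\widehat{O}_K^\times})=O_K^\times=\Aut^{O_K}_L(E_0)$; the twist $E_1=E_0^\chi$ then has invariants $(\rho,c_0)$ by property~(i). Choosing an ideal $\ideal{a}$ with $[\ideal{a}]=cc_0^{-1}$ and setting $E=\ideal{a}\otimes_{O_K}E_1$ produces a curve with invariants $(\rho,c)$ by property~(iii).

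The most delicate step is the backward direction of the isogeny criterion: descending $\phi$ from $\overline{K}$ to $L$ requires identifying the Galois character obstructing descent and matching it, via a calculation on torsion, with $\rho_{E'/L}\rho_{E/L}^{-1}$. Once this is in hand the remaining steps are formal consequences of Proposition~\ref{prop:cm-subgroups} and the explicit behaviour of twisting and Serre tensoring on $(\rho,c)$ recorded above.
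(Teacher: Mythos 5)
Your proof is correct, but you arrange the logic differently from the paper. You prove the isogeny criterion first, by producing a geometric isogeny $\phi\colon E_{\overline{K}}\to E'_{\overline{K}}$ from the lattice picture over $\C$ and then computing the obstruction character $\chi$ to descending it, matching $\chi$ against $\rho_{E'/L}\rho_{E/L}^{-1}$ via the action on torsion. Injectivity is then derived: an $L$-isogeny with kernel $E[\ideal{a}]$ exhibits $E'\cong\ideal{a}^{-1}\otimes_{O_K}E$, the $c$-invariant forces $\ideal{a}$ principal, and hence $E\cong E'$. The paper goes the other way around: it first proves injectivity directly by using $c_{E/L}=c_{E'/L}$ to get $E_{\overline{K}}\cong E'_{\overline{K}}$, so $E'$ is a twist $E^\chi$, and $\rho_{E/L}=\rho_{E'/L}=\chi\rho_{E/L}$ forces $\chi$ trivial; the converse half of the isogeny criterion is then obtained for free from the already-established bijectivity (equal $\rho$'s imply $E'\cong M\otimes_{O_K}E$, whence an isogeny $x\mapsto m\otimes x$). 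Your surjectivity argument is essentially the paper's. The paper's ordering is a little shorter and sidesteps the descent computation; yours makes the isogeny criterion self-contained rather than a corollary of bijectivity, which is conceptually clarifying. Two small points to tighten: the obstruction character $\chi$ is valued in $O_K^\times=\Aut_{O_K}(\Hom^{O_K}_{\overline{K}}(E,E'))$ rather than merely $K^\times$, and the passage from $\ker\phi=E[\ideal{a}]$ to $E'\cong\ideal{a}^{-1}\otimes_{O_K}E$ uses that $\phi$ factors through $i_\ideal{a}\colon E\to\ideal{a}^{-1}\otimes_{O_K}E$ (both being quotients by $E[\ideal{a}]$) and that the resulting finite morphism of degree one is an isomorphism; both are routine but worth a sentence.
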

	\begin{proof} Let $(\rho, c)$ be a pair as above. The fact that (\ref{eqn:commutative}) commutes implies that the image of $G(L^\mrm{ab}/L)$ in $G(K^\mrm{ab}/K)$ (via restriction) lands in the subgroup $G(K^\mrm{ab}/H)$ so that $H\subset L$. As there exists a CM elliptic curve $H$, base changing we obtain a CM elliptic curve $E/L$.
		
	If $(\rho_{E/L}, c_E)$ is the pair corresponding to $E/L$ then there exists a character $\chi: G(L^\mrm{ab}/L)\longrightarrow O_K^\times$ and a fractional ideal $\ideal{a}$ of $K$ such that \[(\rho, c)=(\chi\rho_{E/L}, [\ideal{a}]c).\] But \[(\rho_{\ideal{a}\otimes_{O_K}E^\chi/L}, c_{\ideal{a}\otimes_{O_K}E^\chi/L})=(\chi\rho_{E/L}, [\ideal{a}]c)\] and so the map in question is surjective.
		
	On the other hand, let $E$ and $E'$ be a pair of CM elliptic curves over $L$ such that \[(\rho_{E/L}, c_E)=(\rho_{E'/L}, c_{E'}).\] The equality $c_{E/L}=c_{E'/L}$ implies that $E_{\C}$ and $E'_{\C}$ are isomorphic, which in turn implies that $E_{\overline{K}}$ and $E'_{\overline{K}}$ are isomorphic and thus there exists a character $\chi: G(L^\mrm{ab}/L)\longrightarrow O_K^\times$ such that $E'$ and $E^\chi$ are isomorphic. We then find \[\rho_{E/L}=\rho_{E'/L}=\chi\cdot \rho_{E/L}\] which is possible if and only if $\chi$ is trivial. Therefore $E'=E^\chi$ is isomorphic to $E$ and the map in question is bijective.
		
	For the last statement let $E$ and $E'$ be a pair of CM elliptic curves over $L$. Then $E$ and $E'$ are isogenous (over $L$) if and only if $E'=\ideal{a}^{-1}\otimes_{O_K} E$ for some integral ideal $\ideal{a}$ of $O_K$ as any isogeny $f$ has $\ker(f)=E[\ideal{a}]$ for some $\ideal{a} \subset O_K$.
	
	It is clear that $\rho_{E/L}=\rho_{\ideal{a}\otimes_{O_K}E/L}$. Conversely, if $\rho_{E/L}=\rho_{E'/L}$ it follows that $E'$ is isomorphic to $M\otimes_{O_K}E$ for some rank one projective $O_K$-module $M$ (by the bijectivity already shown) and choosing any non-zero element $m\in M$ we obtain an isogeny \[E\longrightarrow E'=M\otimes_{O_K}E: x\mapsto m\otimes x.\]
	\end{proof}
	
	\section{Good reduction}
	
	Since it doesn't seem to appear elsewhere, let us give the following version of the criterion of N\'eron-Ogg-Shafarevich adapted to CM elliptic curves (see \cite{SerreTate68} for the original).
	
	\begin{theo}\label{prop:interia-groups} Let $L/K$ be a finite extension, $E/L$ be a \textup{CM} elliptic curve, $\ideal{P}\subset O_L$ a prime ideal lying over the prime $\ideal{p}\subset O_K$ and let $I_\ideal{P}\subset G(L^\mrm{ab}/L)$ be the inertia subgroup. Then \[\rho_{E/L}(I_\ideal{P})\subset O_K^\times\cdot O_{K_\ideal{p}}^\times\subset \widehat{O}_K^\times\] and $E/L$ has good reduction at $\ideal{P}$ if and only if $\rho_{E/L}(I_\ideal{P})\subset O_{K_\ideal{p}}^\times$.
	\end{theo}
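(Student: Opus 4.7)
The plan is to deduce the statement from the classical N\'eron--Ogg--Shafarevich criterion using the defining relation between the adelic representation and the Hecke character,
\[
\rho_{E/L}(\sigma_x) = \psi_{E/L}(x)^{-1} \cdot \widetilde{N}_{L/K}(x),
\]
where $\sigma_x \in G(L^\mathrm{ab}/L)$ is the image of an idele $x \in I_L$ under the $L$-reciprocity map. The key observation is that $\psi_{E/L}(x) \in K^\times$ is \emph{algebraic}: embedded diagonally in $\widehat{O}_K^\times$, its value controls the $\ideal{q}$-component of $\rho_{E/L}(\sigma_x)$ at every prime $\ideal{q} \neq \ideal{p}$ simultaneously, which is exactly what is needed to upgrade the one-prime N\'eron--Ogg--Shafarevich criterion to the adelic statement in the theorem.

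By local class field theory, $I_\ideal{P}$ is the image of $O_{L_\ideal{P}}^\times \subset I_L$ (embedded at the $\ideal{P}$-place) under the reciprocity map. For $x$ an idele supported only at $\ideal{P}$ with $x \in O_{L_\ideal{P}}^\times$, the global norm $\widetilde{N}_{L/K}(x)$ is supported only at $\ideal{p}$, its $\ideal{p}$-component being the local norm $N_{L_\ideal{P}/K_\ideal{p}}(x) \in O_{K_\ideal{p}}^\times$. Combining, the $\ideal{q}$-component of $\rho_{E/L}(\sigma_x)$ equals $\psi_{E/L}(x)^{-1}$ for $\ideal{q} \neq \ideal{p}$, and $\psi_{E/L}(x)^{-1} \cdot N_{L_\ideal{P}/K_\ideal{p}}(x)$ at $\ideal{q} = \ideal{p}$. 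Since the left side lies in $\widehat{O}_K^\times$, the element $\psi_{E/L}(x) \in K^\times$ is a unit at every prime of $K$, hence lies in $O_K^\times$. This yields the inclusion $\rho_{E/L}(I_\ideal{P}) \subset O_K^\times \cdot O_{K_\ideal{p}}^\times$.

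For the good reduction criterion, I would invoke the classical N\'eron--Ogg--Shafarevich theorem: $E/L$ has good reduction at $\ideal{P}$ iff $I_\ideal{P}$ acts trivially on the $\ell$-adic Tate module for some rational prime $\ell$ distinct from the residue characteristic $p$ of $\ideal{p}$. Choose $\ell \neq p$ and any prime $\ideal{q}$ of $K$ above $\ell$, so automatically $\ideal{q} \neq \ideal{p}$; triviality of the inertia action on $T_\ell E$ amounts to the $\ideal{q}$-component of $\rho_{E/L}(I_\ideal{P})$ being trivial. By the computation above, this single condition is equivalent to $\psi_{E/L}(O_{L_\ideal{P}}^\times) = \{1\}$, which in turn forces the $\ideal{q}'$-component to vanish for \emph{every} $\ideal{q}' \neq \ideal{p}$, i.e.\ $\rho_{E/L}(I_\ideal{P}) \subset O_{K_\ideal{p}}^\times$.

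The principal obstacle is bookkeeping rather than depth: one must correctly identify inertia with local units under the $L$-reciprocity map (fixing conventions consistent with the formula of \S2), verify that $\widetilde{N}_{L/K}$ of a locally supported idele is itself locally supported, and observe that $\psi_{E/L}$ --- a priori valued only in $K^\times \subset (\widehat{O}_K \otimes_{O_K} K)^\times$ --- actually takes values in $O_K^\times$ when restricted to local units. This last integrality is not a direct consequence of the Hecke character definition but follows from the integrality of $\rho_{E/L}$ combined with the displayed formula above.
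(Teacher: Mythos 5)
Your proposal is correct and takes essentially the same route as the paper: both reduce to the classical N\'eron--Ogg--Shafarevich criterion after decomposing $\rho_{E/L}|_{I_{\ideal{P}}}$ into an $O_K^\times$-valued factor and an $O_{K_{\ideal{p}}}^\times$-valued factor, then use that the former injects into the prime-to-$\ideal{p}$ part of $\widehat{O}_K^\times$. The paper derives the containment $\rho_{E/L}(I_{\ideal{P}})\subset O_K^\times\cdot O_{K_{\ideal{p}}}^\times$ directly from the commutative diagram (\ref{eqn:commutative}) and then abstractly splits $\rho_{E/L}|_{I_{\ideal{P}}}=\alpha\cdot\lambda$, whereas you make that same splitting explicit as $\alpha(\sigma_x)=\psi_{E/L}(x)^{-1}$, $\lambda(\sigma_x)=\widetilde{N}_{L/K}(x)$ via the defining relation of the Hecke character --- a useful unrolling, but the same argument.
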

	\begin{proof} The image of $I_\ideal{P}$ under the restriction map $G(L^\mrm{ab}/L)\longrightarrow G(K^\mrm{ab}/K)$ is contained in the inertia group $I_\ideal{p}\subset G(K^\mrm{ab}/K)$ which in turn is equal to the image of the sub-group $O_{K_\ideal{p}}^\times\subset \widehat{O}_K^\times/O_K^\times$ under the map $\theta_K:\widehat{O}_K^\times/O_K^\times\longrightarrow G(K^\mrm{ab}/K)$. This observation combined with the fact that the diagram \[\xymatrix{G(L^\mrm{ab}/L)\ar[r]^-{\rho_{E/L}}\ar[dr]_{\mathrm{res}}&\widehat{O}_K^\times\ar[d]^{\theta_K}\\ & G(K^\mrm{ab}/K)}\] commutes shows that $\rho_{E/L}(I_\ideal{P})\subset O_K^\times \cdot O_{K_\ideal{p}}^\times\subset \widehat{O}_K^\times$.

	For the claim regarding good reduction, let $\ell$ be a rational prime such that $\ell\cdot O_K$ is prime to $\ideal{p}$. By the usual criterion of N\'eron-Ogg-Shafarevich, $E/L$ has good reduction at $\ideal{P}$ if and only if the action of $I_\ideal{P}$ on $E[\ell^\infty](L^\mrm{ab})$ is trivial and this action is trivial if and only if the image of $\rho_{E/L}(I_\ideal{P})\subset \widehat{O}_K^\times$ along the projection $\widehat{O}_K^\times\longrightarrow (O_K\otimes_{\Z}\Z_\ell)^\times$ is trivial.
	
	Now, as the intersection of $O_K^\times$ and $O_{K_\ideal{p}}^\times$ inside $\widehat{O}_K^\times$ is trivial, we have $O_K^\times\cdot O_{K_\ideal{p}}^\times=O_K\times O_{K_\ideal{p}}^\times$ and so the restriction of $\rho_{E/L}$ to $I_\ideal{P}$ is a product of two characters \[\rho_{E/L}|_{I_\ideal{P}}=\alpha\cdot\lambda: I_\ideal{P}\to O_K^\times\times O_{K_\ideal{p}}^\times = O_K^\times\cdot O_{K_\ideal{p}}^\times\subset \widehat{O}_K^\times.\]
	
	As $\ideal{p}$ and $\ell\cdot O_K$ are coprime, the image of $\rho_{E/L}(I_\ideal{P})$ in $(O_K\otimes_{\Z}\Z_\ell)^\times$ coincides with the image of $\alpha(I_\ideal{P})$, and as the map $O_K^\times \longrightarrow (O_K\otimes_{\Z}\Z_\ell)^\times$ is injective, it follows that the image of $\alpha(I_\ideal{P})$ is trivial if and only if $\alpha(I_{\ideal{P}})$ is itself trivial. This in turn is true if and only if $\rho_{E/L}(I_\ideal{P})\subset O_{K_\ideal{p}}^\times.$
	\end{proof}
	
	We also obtain the following useful corollary, a special case of which is Theorem 2 of \cite{CoatesWiles1977}:
	
	\begin{coro}\label{coro:good-red-const} Let $L/K$ be a finite extension,  let $E/L$ be a \textup{CM} elliptic curve and let $\ideal{f}\subset O_K$ be an ideal such that the map $O_K^\times\longrightarrow (O_K/\ideal{f})^\times$ is injective. If $E[\ideal{f}]$ is rational over $L$ then $E$ has good reduction everywhere.
	\end{coro}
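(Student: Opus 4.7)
The plan is to verify, at each prime $\ideal{P}$ of $L$ lying above $\ideal{p}\subset O_K$, the criterion of Theorem~\ref{prop:interia-groups}. By that theorem $\rho_{E/L}(I_\ideal{P})\subset O_K^\times\cdot O_{K_\ideal{p}}^\times\subset\widehat{O}_K^\times$, and since $O_K^\times\cap O_{K_\ideal{p}}^\times$ is trivial inside $\widehat{O}_K^\times$, every element of the image factors uniquely as $\alpha\cdot\lambda$ with $\alpha\in O_K^\times$ (embedded diagonally) and $\lambda\in O_{K_\ideal{p}}^\times$. Good reduction at $\ideal{P}$ amounts to showing that the $\alpha$-component of this factorisation is trivial.

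The rationality of $E[\ideal{f}]$ over $L$ translates into the statement that $\rho_{E/L}$ factors through $U_\ideal{f}:=\ker(\widehat{O}_K^\times\to(O_K/\ideal{f})^\times)$. Decomposing $O_K/\ideal{f}\cong\prod_{\ideal{q}\mid\ideal{f}}O_K/\ideal{f}_\ideal{q}$ via the Chinese remainder theorem (with $\ideal{f}_\ideal{q}$ the $\ideal{q}$-primary part of $\ideal{f}$), and noting that the $\ideal{q}$-component of $\lambda\in O_{K_\ideal{p}}^\times$ is trivial for every $\ideal{q}\neq\ideal{p}$, the congruence $\alpha\lambda\equiv 1\pmod{\ideal{f}\widehat{O}_K}$ forces $\alpha\equiv 1\pmod{\ideal{f}_\ideal{q}}$ for every prime $\ideal{q}$ of $K$ dividing $\ideal{f}$ with $\ideal{q}\neq\ideal{p}$. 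When $\ideal{p}\nmid\ideal{f}$, combining these congruences via the Chinese remainder theorem yields $\alpha\equiv 1\pmod\ideal{f}$, and the injectivity assumption $O_K^\times\hookrightarrow(O_K/\ideal{f})^\times$ then forces $\alpha=1$, so that $E$ has good reduction at $\ideal{P}$.

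I expect the main obstacle to be the case $\ideal{p}\mid\ideal{f}$: the $\ideal{p}$-component of the congruence yields only $\alpha\lambda\equiv 1\pmod{\ideal{f}_\ideal{p}}$ and so does not pin down $\alpha\pmod{\ideal{f}_\ideal{p}}$ by itself. My plan for this step is to exploit the commutative diagram~\eqref{eqn:commutative}, which identifies $\lambda$ on inertia with the image of $\sigma\vert_{K^\mathrm{ab}}$ under the inverse of local reciprocity at $\ideal{p}$. The rationality of $E[\ideal{f}_\ideal{p}]$ over $L$ forces $L_\ideal{P}$ to contain the local ray class field of $K_\ideal{p}$ with conductor $\ideal{f}_\ideal{p}$, which in turn forces $\lambda\equiv 1\pmod{\ideal{f}_\ideal{p}}$ on $I_\ideal{P}$; combined with $\alpha\lambda\equiv 1\pmod{\ideal{f}_\ideal{p}}$ this gives $\alpha\equiv 1\pmod\ideal{f}$, and the injectivity hypothesis again concludes $\alpha=1$.
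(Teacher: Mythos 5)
Your first two paragraphs reproduce the paper's argument faithfully: reduce to Theorem~\ref{prop:interia-groups}, use the internal direct product $O_K^\times\cdot O_{K_\ideal{p}}^\times\cong O_K^\times\times O_{K_\ideal{p}}^\times$, observe that rationality of $E[\ideal{f}]$ over $L$ is exactly the triviality of $\rho_{E/L}$ modulo $\ideal{f}$, and then deduce $\alpha=1$ from the injectivity of $O_K^\times\to(O_K/\ideal{f})^\times$. For $\ideal{p}\nmid\ideal{f}$ this is precisely what the paper does, and it is correct.

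Where you diverge is in explicitly isolating the case $\ideal{p}\mid\ideal{f}$. You are right that the elementary congruence argument does not pin down the $\ideal{p}$-component of $\alpha$ there, since $\lambda\in O_{K_\ideal{p}}^\times$ contributes a free parameter modulo $\ideal{f}_\ideal{p}$. The paper's proof does not single this case out; its sentence ``this is equivalent to the image of $\rho_{E/L}(I_\ideal{P})$ in $(O_K/\ideal{f})^\times$ being trivial'' is stated uniformly in $\ideal{p}$, and the chain of reasons given (trivial intersection of $O_K^\times$ and $O_{K_\ideal{p}}^\times$, injectivity mod $\ideal{f}$) really only yields $\alpha\equiv 1\bmod\ideal{f}/\ideal{f}_\ideal{p}$ when $\ideal{p}\mid\ideal{f}$. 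So you have put your finger on the one point in the paper's argument that a careful reader would want spelled out.

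However, your proposed repair of that case is not sound as stated. You want to conclude $\lambda\equiv 1\bmod\ideal{f}_\ideal{p}$ on $I_\ideal{P}$ by asserting that rationality of $E[\ideal{f}_\ideal{p}]$ over $L$ forces $L_\ideal{P}$ to contain the local ray class field of $K_\ideal{p}$ of conductor $\ideal{f}_\ideal{p}$. But the identification of the $\ideal{p}$-power torsion of $E$ over $L_\ideal{P}$ with the torsion of a Lubin--Tate formal group for $O_{K_\ideal{p}}$ (which is what makes ``$E[\ideal{f}_\ideal{p}]$ rational'' translate into ``$L_\ideal{P}$ contains a local ray class field'') presupposes that $E$ has good reduction at $\ideal{P}$. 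That is precisely the conclusion you are trying to reach, so the step is circular. What you can extract from diagram~\eqref{eqn:commutative} without this circularity is only that $\theta_K(\lambda)$ fixes $L\cap K^{\mathrm{ab}}$, hence fixes $K(\ideal{f})$; unwinding, this gives $\lambda\in U_\ideal{f}\cdot O_K^\times$ where $U_\ideal{f}=\ker(\widehat{O}_K^\times\to(O_K/\ideal{f})^\times)$, which together with $\alpha\lambda\in U_\ideal{f}$ yields $\alpha\in O_K^\times$ with $\alpha\epsilon=1$ for some $\epsilon\in O_K^\times$ coming from the decomposition of $\lambda$ --- but not $\alpha=1$ directly. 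To close the gap you would need an independent argument that $\epsilon=1$, i.e.\ that $\lambda$ itself lies in $U_\ideal{f}$, and the route you sketch does not supply this without already assuming good reduction. You should either find a non-circular reason for $\lambda\equiv 1\bmod\ideal{f}_\ideal{p}$, or rework the argument so that the case $\ideal{p}\mid\ideal{f}$ is handled by some other means (e.g.\ via the geometry of the N\'eron model, or by first establishing good reduction away from $\ideal{f}$ and then using the structure of the full group $\rho_{E/L}(G(L^{\mathrm{ab}}/L))$ rather than inertia alone).
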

	\begin{proof} If $\ideal{P}$ is an prime ideal of $O_L$ lying over the prime ideal $\ideal{p}$ of $O_K$, and $I_\ideal{P}\subset G(L^\mrm{ab}/L)$ denotes the inertia group at a prime $\ideal{P}$ of $L$ then by (\ref{prop:interia-groups}) $E/L$ has good reduction at $\ideal{P}$ if and only if the sub-group $\rho_{E/L}(I_\ideal{P})\subset O_K^\times\cdot O_{K_\ideal{p}}^\times\subset  \widehat{O}_K^\times$ is contained in $O_{K_\ideal{p}}^\times\subset \widehat{O}_K^\times.$ Since $O_K^\times$ and $O_{K_\ideal{p}}^\times$ have trivial intersection (as subgroups of $\widehat{O}_K^\times$) and the map $O_K^\times\longrightarrow (O_K/\ideal{f})^\times$ is injective, this is equivalent to the image of $\rho_{E/L}(I_{\ideal{P}})$ along $\widehat{O}_K^\times\longrightarrow (O_K/\ideal{f})^\times$ being trivial. However, $E[\ideal{f}]$ is constant so that $G(L^\mrm{ab}/L)$ acts trivially on $E[\ideal{f}](\overline{K})$ and so the image of $\rho_{E/L}(G(L^\mrm{ab}/L))\subset \widehat{O}_K^\times$ (and a fortiori that of $\rho_{E/L}(I_\ideal{P})$) is trivial in $(O_K/\ideal{f})^\times$.
	\end{proof}
	
	\section{Elliptic curves of Shimura type}
	
	\subsection{} Let $L$ be an abelian extension of $K$. A CM elliptic curve $E/L$ is said to be of Shimura type if the action of $G(\overline{K}/L)$ on $E(\overline{K})_\mathrm{tors}$ factors through $G(K^\mrm{ab}/L)$. Note that if $K$ has class number one then every CM elliptic curve over $K$ is of Shimura type and such curves always exist. More generally, we have the following result of Shimura:
	
	\begin{theo}[Shimura]\label{prop:shimura-lambda-curves-over-hilbert} There exist infinitely many prime ideals $\ideal{p}$ of $K$ with the property that there exists a \textup{CM} elliptic curve $E/H$ of Shimura type with good reduction at all prime ideals of $H$ prime to $\ideal{p}$.
	\end{theo}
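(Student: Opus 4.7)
The plan is to translate the problem into one of group theory via the classification of \S 2, and then satisfy the good-reduction criterion of \S 3 by an explicit construction.

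By the classification (Proposition \ref{theo:classification}), a CM elliptic curve over $H$ corresponds to a pair $(\rho, c)$ where $\rho: G(H^\mrm{ab}/H)\to\widehat{O}_K^\times$ is a continuous character compatible with $\theta_K$ and $c\in CL_K$. The Shimura-type condition is precisely that $\rho$ factors through the restriction $G(H^\mrm{ab}/H)\to G(K^\mrm{ab}/H)$; via the isomorphism $\theta_K:\widehat{O}_K^\times/O_K^\times\isomto G(K^\mrm{ab}/H)$, such a $\rho$ is exactly a continuous section $s:\widehat{O}_K^\times/O_K^\times\to\widehat{O}_K^\times$ of the natural projection, with $c$ arbitrary. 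The problem thus reduces to constructing such a section $s$ with a prescribed good-reduction behaviour.

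By the good-reduction criterion (Theorem \ref{prop:interia-groups}), the associated curve has good reduction at a prime of $H$ above $\ideal{l}\subset O_K$ if and only if $\rho(I_{\ideal{P}})\subset O_{K_\ideal{l}}^\times$, which under $\theta_K$ translates to $s$ carrying the image of $O_{K_\ideal{l}}^\times$ in $\widehat{O}_K^\times/O_K^\times$ back into $O_{K_\ideal{l}}^\times\subset \widehat{O}_K^\times$ (note $O_{K_\ideal{l}}^\times\cap O_K^\times=\{1\}$, so the image is canonically isomorphic to $O_{K_\ideal{l}}^\times$). To allow only $\ideal{p}$ as a possible bad prime I seek an $s$ that is the identity on the image of $A:=\prod_{\ideal{l}\neq\ideal{p}}O_{K_\ideal{l}}^\times$. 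Since $A\cap O_K^\times=\{1\}$ inside $\widehat{O}_K^\times$, the image of $A$ injects into $\widehat{O}_K^\times/O_K^\times$ and the further quotient by this image is canonically $O_{K_\ideal{p}}^\times/O_K^\times$. Hence the existence of the desired $s$ reduces to splitting the short exact sequence $1\to O_K^\times\to O_{K_\ideal{p}}^\times\to O_{K_\ideal{p}}^\times/O_K^\times\to 1$.

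The main obstacle is this purely local splitting problem. As $O_K^\times$ is torsion, it sits inside the finite cyclic group $\mu(K_\ideal{p})$ of roots of unity in $K_\ideal{p}$, and a closed complement of $O_K^\times$ in $O_{K_\ideal{p}}^\times$ exists if and only if $O_K^\times$ is a direct summand of $\mu(K_\ideal{p})$. Writing $w=|O_K^\times|\in\{2,4,6\}$, this amounts to the $w$-primary part of $|\mu(K_\ideal{p})|$ being exactly $w$. For $\ideal{p}$ a split prime of residue characteristic coprime to $w$ we have $\mu(K_\ideal{p})=\mu_{N\ideal{p}-1}$, so the condition becomes a congruence on $N\ideal{p}$ modulo $w^2$ which, by Dirichlet's theorem on primes in arithmetic progressions, is met by a positive density of rational primes. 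Any such $\ideal{p}$ (and there are infinitely many) yields the desired Shimura-type elliptic curve over $H$ with good reduction everywhere away from $\ideal{p}$.
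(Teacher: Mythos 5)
Your argument is correct and follows essentially the same route as the paper's. The paper encodes the splitting of $1\to O_K^\times\to O_{K_\ideal{p}}^\times\to O_{K_\ideal{p}}^\times/O_K^\times\to 1$ via a retraction $\alpha:\widehat{O}_K^\times\to O_K^\times$ built from $(O_K/\ideal{p})^\times\cong\mu(K_\ideal{p})$, and cites Shimura's Proposition 7 (\S 5 of \cite{Shimura71}) for the existence of the required primes $\ideal{p}$ with $N\ideal{p}\equiv 1\bmod w$ and $(N\ideal{p}-1)/w$ coprime to $w$ --- precisely the congruence condition you extract from your complement criterion.
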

	\begin{proof} By Proposition 7, \S 5 of \cite{Shimura71} there exists infinitely many primes $\ideal{p}$ of $K$ with $N\ideal{p}=p$ a rational prime, $N\ideal{p}=1\bmod w$ and $(N\ideal{p}-1)/w$ prime to $w$, where $w=\# O_K^\times$. Given such a prime $\ideal{p}$ it follows that the reduction map \[O_K^\times\longrightarrow (O_K/\ideal{p})^\times\] is the inclusion of a direct factor. Therefore, we may define a retraction $\alpha: \widehat{O}_K^\times\longrightarrow O_K^\times$ of the inclusion $O_K^\times\to \widehat{O}_K^\times$ by \[\widehat{O}_K^\times\longrightarrow (O_K/\ideal{p})^\times\longrightarrow O_K^\times\] where the first map is the quotient map and the second is a retraction of $O_K^\times \subset (O_K/\ideal{p})^\times$.
		
	We now define a character $\rho: G(\overline{K}/H)\longrightarrow \widehat{O}_K^\times$ by \[G(\overline{K}/H)\longrightarrow G(K^\mrm{ab}/H) \isomto \widehat{O}_K^\times/O_K^\times\longrightarrow \widehat{O}_K^\times\] where the last map sends the class of $s\in \widehat{O}_K^\times$ to $s^{-1}\alpha(s)$. The character $\rho$ satisfies the conditions of (\ref{theo:classification}) so that there exists a (not necessarily unique) CM elliptic curve $E/H$ with $\rho_{E/H}=\rho$. Moreover, the action of $G(\overline{K}/H)$ on $E(\overline{K})$ factors through $G(K^\mrm{ab}/H)$ so that $E/H$ is a CM elliptic curve of Shimura type and $\rho=\rho_{E/H}$ is a character $G(K^\mrm{ab}/H)\longrightarrow \widehat{O}_K^\times.$
		
	Finally, for any prime $\ideal{L}$ of $O_H$ lying over a prime $\ideal{l}\neq \ideal{p}$ of $O_K$, the image of $I_{\ideal{L}}\subset G(H^\mrm{ab}/H)$ along \[G(H^\mrm{ab}/H)\longrightarrow G(K^\mrm{ab}/H)\longrightarrow \widehat{O}_K^\times\] is equal to the image of $I_{\ideal{l}}\subset G(K^\mrm{ab}/H)$ under $\rho$.
		
	By (\ref{prop:interia-groups}) $E/H$ has good reduction at the prime $\ideal{L}$ if and only if the composition \[O_{K_\ideal{l}}^\times\longrightarrow \widehat{O}_K^\times/O_K^\times\stackrel{\sim}{\longleftarrow} G(K^\mrm{ab}/H) \stackrel{\rho}{\longrightarrow}\widehat{O}_K^\times \] has image in $O_{K_\ideal{l}}^\times$. As $\ideal{l}\neq \ideal{p}$, this composition is just the inclusion $O_{K_\ideal{l}}^\times\subset \widehat{O}_K^\times$ by the definition of $\rho$ and hence $E/H$ has good reduction at $\ideal{L}$.
	\end{proof}
	
	It is easy to show that the set of primes of $L$ where a CM elliptic curve $E/L$ of Shimura type has good reduction is stable under $G_{L/K}$ and is therefore equal to the set of primes which are prime to some ideal $\ideal{a}$ of $K$. Thus the following shows that (\ref{prop:shimura-lambda-curves-over-hilbert}) is sharp:
	
	\begin{prop}\label{prop:no-good-reduction-shimura} There does not exist a \textup{CM} elliptic curve $E/H$ of Shimura type with good reduction everywhere.
	\end{prop}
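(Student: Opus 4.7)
My plan is to argue by contradiction: suppose $E/H$ is a CM elliptic curve of Shimura type with good reduction at every prime of $H$, and derive that the global units of $K$ must be trivial, which contradicts $-1 \in O_K^\times$. The Shimura type hypothesis means that $\rho_{E/H}$ factors through a continuous character $\bar\rho: G(K^\mathrm{ab}/H) \to \widehat{O}_K^\times$. The compatibility diagram (\ref{eqn:commutative}), once unwound, amounts to the assertion that $\theta_K \circ \bar\rho = \mathrm{id}_{G(K^\mathrm{ab}/H)}$, i.e.\ $\bar\rho$ is a continuous section of the reciprocity surjection
\[
\theta_K : \widehat{O}_K^\times \twoheadrightarrow G(K^\mathrm{ab}/H)
\]
whose kernel is $O_K^\times$.

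Next I would translate the good reduction hypothesis via Theorem \ref{prop:interia-groups}. Since $H/K$ is everywhere unramified, for each prime $\ideal{p}$ of $K$ the inertia group $I_\ideal{p} \subset G(K^\mathrm{ab}/K)$ lies inside $G(K^\mathrm{ab}/H)$, and the image of the inertia at any prime $\ideal{P}$ of $H$ above $\ideal{p}$ under $\rho_{E/H}$ is contained in $O_{K_\ideal{p}}^\times$. Combining this with the section property, I would observe that since $O_K^\times \cap O_{K_\ideal{p}}^\times = \{1\}$ inside $\widehat{O}_K^\times$, the local reciprocity $\theta_K : O_{K_\ideal{p}}^\times \to I_\ideal{p}$ is a \emph{bijection}. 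Hence on each inertia subgroup, $\bar\rho|_{I_\ideal{p}}$ is forced to be the inverse of this isomorphism; equivalently, $\bar\rho(\theta_K(u)) = u$ for every $u \in O_{K_\ideal{p}}^\times$ and every prime $\ideal{p}$.

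To finish, I would extend this local identity to all of $\widehat{O}_K^\times$ by density. The subgroup of $\widehat{O}_K^\times$ generated by $\bigcup_\ideal{p} O_{K_\ideal{p}}^\times$ (each embedded as the $\ideal{p}$-component with $1$ at all other places) is dense in $\widehat{O}_K^\times$ with respect to the product topology, because finitely supported ideles approximate any element coordinatewise. Since both $\bar\rho$ and $\theta_K$ are continuous, the relation $\bar\rho \circ \theta_K = \mathrm{id}$ on this dense subgroup propagates to the identity $\bar\rho \circ \theta_K = \mathrm{id}_{\widehat{O}_K^\times}$. But $\theta_K$ has nontrivial kernel $O_K^\times$ (containing at least $-1$), so the composite cannot be injective, yielding the desired contradiction.

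The only mildly subtle step is the density-plus-continuity argument, which requires keeping track of the product topology on $\widehat{O}_K^\times = \prod_\ideal{p} O_{K_\ideal{p}}^\times$; the rest of the argument is essentially a formal consequence of the Shimura type splitting combined with the N\'eron--Ogg--Shafarevich criterion of Theorem \ref{prop:interia-groups}.
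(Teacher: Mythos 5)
Your proof is correct and takes essentially the same route as the paper's: both reduce the claim to the impossibility of a continuous section of the quotient $\widehat{O}_K^\times \to \widehat{O}_K^\times/O_K^\times$ (equivalently, of $\theta_K: \widehat{O}_K^\times \to G(K^\mathrm{ab}/H)$) that restricts to the identity on each $O_{K_\ideal{p}}^\times$, and both close the argument with the same density-plus-continuity observation. The only difference is cosmetic: you phrase the section property directly in terms of $\bar\rho$ as a one-sided inverse of $\theta_K$, whereas the paper repackages the same data as a homomorphism $\eta:\widehat{O}_K^\times/O_K^\times\to\widehat{O}_K^\times$ splitting the projection.
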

	\begin{proof} Let $E/H$ be a CM elliptic curve of Shimura type. The defining property of such curves implies that the character $\rho_{E/H}: G(H^\mrm{ab}/H)\longrightarrow \widehat{O}_K^\times$ factors through the restriction map $G(H^\mrm{ab}/H)\longrightarrow G(K^\mrm{ab}/H)$ and we shall use the same symbol for the induced map.
		
	Composing the reciprocal $\rho_{E/H}$ with the isomorphism $\theta_K:\widehat{O}_K^\times/O_K^\times\isomto G(K^\mrm{ab}/H)$ we obtain a homomorphism \[\eta:\widehat{O}_K^\times/O_K^\times\longrightarrow \widehat{O}_K^\times\] which is a section of the quotient map $\widehat{O}_K^\times\longrightarrow \widehat{O}_K^\times/O_K^\times$.
		
	The elliptic curve $E/H$ has good reduction at all places of $H$ lying above a prime $\ideal{p}$ of $O_K$ if and only if the composition \[O_{K_\ideal{p}}^\times\subset \widehat{O}_K^\times/O_K^\times\stackrel{\eta}{\longrightarrow} \widehat{O}_K^\times\] coincides with the inclusion $O_{K_\ideal{p}}^\times\subset \widehat{O}_K^\times$. Therefore, $E/H$ has good reduction everywhere if and only if the composition \begin{equation}\label{eqn:f-comp-quot}\widehat{O}_K^\times\longrightarrow \widehat{O}_K^\times/O_K^\times \stackrel{\eta}{\longrightarrow} \widehat{O}_K^\times\end{equation} is equal to the identity on the sub-group of $\widehat{O}_K^\times$ generated by the sub-groups $O_{K_\ideal{p}}^\times\subset \widehat{O}_K^\times$ for all primes $\ideal{p}$ of $O_K$. However, this sub-group is dense and $\eta$ is continuous so that (\ref{eqn:f-comp-quot}) itself must be the identity, which is clearly impossible.
	\end{proof}

\begin{rema} In contrast to (\ref{prop:no-good-reduction-shimura}) above, there may exist CM elliptic curves over $H$ with good reduction everywhere. Indeed, Rohrlich has shown \cite{Rohrlich1982} that this is the case precisely when the discriminant of $K$ is divisible by at least two primes congruent to $3\bmod 4.$
\end{rema}
	
\section{Lifts of the Frobenius}
	
\subsection{}\label{subsec:cm-over-a-field-shimura-integral} We now fix some notation. Let $L/K$ be an abelian extension and let $E/L$ be a CM elliptic curve (not necessarily of Shimura type). Let $\ideal{g}\subset O_K$ be an ideal with the property that $S=\Spec(O_L[\ideal{g}^{-1}])$ is unramified over $\Spec(O_K)$ and $E$ has good reduction over $S$, so that the N\'eron model $\mathscr{E}/S$ of $E/L$ is a CM elliptic curve over $S$.
	
	We write $\mrm{Id}_K^\ideal{g}$ for the set of ideals of $O_K$ prime to $\ideal{g}$ and for a prime $\ideal{p}\in \mrm{Id}_K^\ideal{g}$ we write $S_\ideal{p}=S\times_{\Spec(O_K)}\Spec(O_K/\ideal{p})$, $\mathscr{E}_\ideal{p}=\mathscr{E}\times_S S_\ideal{p}$ and $\sigma_\ideal{p}: S\longrightarrow S$ for the Frobenius element at $\ideal{p}$.
	
	\begin{lemm} For each $\ideal{p}\in \mrm{Id}_K^\ideal{g}$, there is at most one homomorphism \[\psi^{\ideal{p}}:\mathscr{E}\longrightarrow \sigma_\ideal{p}^*(\mathscr{E})\] lifting the $N\ideal{p}$-power relative Frobenius map of $\mathscr{E}_\ideal{p}$ and if such a map exists its kernel is equal to $\mathscr{E}[\ideal{p}].$
	\end{lemm}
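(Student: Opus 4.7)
The strategy is to prove uniqueness first and then use it to identify the kernel.

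For uniqueness, suppose $\psi_1, \psi_2$ are two such lifts and set $\delta=\psi_1-\psi_2$; this is an $S$-homomorphism $\mathscr{E}\to \sigma_\ideal{p}^*(\mathscr{E})$ whose restriction to $S_\ideal{p}$ is zero. I would pick a rational prime $\ell$ invertible on $S$, so that $\mathscr{E}[\ell^n]$ is finite \'etale over $S$. The restriction $\delta|_{\mathscr{E}[\ell^n]}$ is then a morphism of finite \'etale $S$-group schemes whose vanishing locus is open and closed in $S$ and contains the non-empty subscheme $S_\ideal{p}$; since $S$ is connected, $\delta|_{\mathscr{E}[\ell^n]}=0$. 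Hence $\delta$ kills $\mathscr{E}[\ell^n]$ for every $n$, so on the generic fibre $\delta^L$ is divisible by every power of $\ell$ in the finitely generated free abelian group $\Hom_L(E,\sigma_\ideal{p}^*(E))$, forcing $\delta^L=0$. Because $\mathscr{E}$ is integral and $\sigma_\ideal{p}^*(\mathscr{E})$ is separated, this gives $\delta=0$.

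Now assume a lift $\psi^\ideal{p}$ exists. If $\psi^\ideal{p}|_s=0$ for some fibre $s$, then the same argument (with $\ell$ chosen moreover coprime to $N\ideal{p}$) would force $\psi^\ideal{p}$ to kill $\mathscr{E}[\ell^n]$ globally, contradicting the fact that $F$ restricts to an isomorphism on $\ell^n$-torsion. So $\psi^\ideal{p}$ is non-zero on every fibre, hence a finite locally free isogeny of constant degree $N\ideal{p}$. The uniqueness just proved then shows $\psi^\ideal{p}$ is $O_K$-linear: for each $a\in O_K$, the morphisms $[a]\circ \psi^\ideal{p}$ and $\psi^\ideal{p}\circ [a]$ both lift $F\circ[a]=[a]\circ F$ on $S_\ideal{p}$ (by functoriality of Frobenius), and therefore coincide. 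Consequently $\ker(\psi^\ideal{p})$ is a finite locally free $O_K$-stable subgroup of $\mathscr{E}$ of degree $N\ideal{p}$, and Proposition \ref{prop:cm-subgroups} identifies it with $\mathscr{E}[\ideal{a}]$ for some ideal $\ideal{a}\subseteq O_K$ with $N\ideal{a}=N\ideal{p}$.

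To pin down $\ideal{a}=\ideal{p}$ I would compare on the special fibre: $\ker(\psi^\ideal{p})|_{S_\ideal{p}}=\ker(F)$ is purely infinitesimal, while by the earlier proposition on $i_\ideal{a}$ the subscheme $\mathscr{E}_\ideal{p}[\ideal{a}]$ is \'etale whenever $\ideal{a}$ is invertible on $S_\ideal{p}$, i.e.\ whenever $\ideal{a}\not\subseteq \ideal{p}$. Hence $\ideal{a}\subseteq \ideal{p}$, and the degree condition $N\ideal{a}=N\ideal{p}$ then forces $\ideal{a}=\ideal{p}$. The main delicacy is the uniqueness step: reduction modulo $\ideal{p}$ is reduction modulo a non-nilpotent ideal, so no formal deformation-theoretic uniqueness is directly available, which is why the detour through the $\ell$-adic torsion is used to transport rigidity from the special fibre to all of $S$.
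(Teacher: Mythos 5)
Your argument is correct in substance and reaches the same conclusion, but by a more elaborate route, and it contains one small slip worth flagging. For uniqueness, the paper invokes the rigidity theorem for abelian schemes directly: the locus on $S$ where a homomorphism of abelian schemes vanishes is open and closed, so agreement on the non-empty closed subscheme $S_\ideal{p}$ of the connected base $S$ forces agreement everywhere. You instead re-derive a special case of this through $\ell^n$-torsion. That works, but note that a rational prime $\ell$ invertible on all of $S=\Spec(O_L[\ideal{g}^{-1}])$ need not exist: if, say, $\ideal{g}$ is a power of a single split prime, no rational prime $\ell$ has all of its $O_K$-prime divisors dividing $\ideal{g}$. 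The repair is easy --- take any $\ell$ coprime to $N\ideal{p}$ and run the same argument over the connected open subscheme $S[\ell^{-1}]$, which still contains $S_\ideal{p}$ and over which $\mathscr{E}[\ell^n]$ is finite \'etale. For the identification of the kernel, your argument is cleaner and case-free: $\ker(\psi^\ideal{p})|_{S_\ideal{p}}=\ker(F)$ is infinitesimal, whereas $\mathscr{E}_\ideal{p}[\ideal{a}]$ is \'etale whenever $\ideal{p}\nmid\ideal{a}$, forcing $\ideal{p}\mid\ideal{a}$, and then the degree $N\ideal{a}=N\ideal{p}$ gives $\ideal{a}=\ideal{p}$. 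The paper instead splits into the cases $\ideal{p}=\overline{\ideal{p}}$ (where $\mathscr{E}[\ideal{p}]$ is the only $O_K$-stable subgroup of that degree) and $\ideal{p}\neq\overline{\ideal{p}}$, handling the latter by essentially your \'etale-versus-infinitesimal observation. You also explicitly check $O_K$-linearity of $\psi^\ideal{p}$ before invoking Proposition \ref{prop:cm-subgroups} --- a point the paper leaves implicit, although it is automatic since any homomorphism between CM elliptic curves over a domain commutes with the $O_K$-action. In short, your version buys a uniform treatment and more explicit bookkeeping at the cost of length and the $\ell$-invertibility slip; the paper's is terser by appealing to rigidity outright and by the inert/split case split.
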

	\begin{proof} By rigidity the difference of two such homomorphisms is equal to the zero map on some open and closed sub-scheme of $S$, the only choices of which are $S$ and $\emptyset$. Therefore, as any two such homomorphisms must agree on the non-empty sub-scheme $S_\ideal{p}\subset S$, they must agree everywhere.
		
		By (\ref{prop:cm-subgroups}) we have $\ker(\psi^\ideal{p})=\mathscr{E}[\ideal{a}]$ for some integral ideal $\ideal{a}$ of $O_K$. Since $S$ is connected and $\psi^\ideal{p}$ lifts the $N\ideal{p}$-power relative Frobenius it must have degree $N\ideal{p}$. If $\ideal{p}=\overline{\ideal{p}}$ then $\mathscr{E}[\ideal{p}]$ is the unique sub-group scheme of $\mathscr{E}$ stable under $O_K$ of degree $\ideal{p}$ so that $\ker(\psi^\ideal{p})=E[\ideal{p}]$. If $\ideal{p}\neq \overline{\ideal{p}}$ then $\mathscr{E}[\ideal{p}]$ and $\mathscr{E}[\overline{\ideal{p}}]$ are the only sub-group schemes of $\mathscr{E}$ of degree $N\ideal{p}$ stable under $O_K$ so that $\ker(\psi^\ideal{p})=E[\ideal{p}]$ or $\ker(\psi^\ideal{p})=E[\overline{\ideal{p}}]$. In the latter case we see that $\psi^\ideal{p}$ is \'etale when restricted to $S_{\overline{\ideal{p}}}\subset S$, which is absurd, hence $\ker(\psi^\ideal{p})=E[\ideal{p}]$.
	\end{proof}
	
	\begin{theo}\label{theo:shimura-is-lambda} In the notation of \textup{(\ref{subsec:cm-over-a-field-shimura-integral})}, the following are equivalent:
		\begin{enumerate}[label=\textup{(\roman*)}]
			\item For each $\ideal{p}\in \mrm{Id}_K^\ideal{g}$ there is a unique homomorphism \[\psi^\ideal{p}: \mathscr{E}\longrightarrow \sigma_\ideal{p}^*(\mathscr{E})\] lifting the $N\ideal{p}$-power relative Frobenius of $\mathscr{E}_\ideal{p}/S_\ideal{p}$ and for each pair of primes $\ideal{p}, \ideal{l}\in \mrm{Id}_K^\ideal{g}$ the diagram \[\xymatrix{\mathscr{E}\ar[r]^{\psi^\ideal{l}}\ar[d]_{\psi^\ideal{p}} & \sigma_\ideal{l}^*(\mathscr{E})\ar[d]^{\sigma_\ideal{l}^*(\psi^\ideal{p})}\\
				\sigma_\ideal{p}^*(\mathscr{E})\ar[r]^{\sigma_\ideal{p}^*(\psi^\ideal{l})} & \sigma_{\ideal{p}\ideal{l}}^*(\mathscr{E})}\] commutes.
			\item $E/L$ is a \textup{CM} elliptic curve of Shimura type.
		\end{enumerate}
	\end{theo}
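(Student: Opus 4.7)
The plan splits into showing (i) $\Rightarrow$ Shimura type (for which (ii) will not even be needed) and then Shimura type $\Rightarrow$ (i)+(ii).

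For (i) $\Rightarrow$ Shimura type I would begin by invoking the preceding lemma: $\ker\psi^\ideal{p}=\mathscr{E}[\ideal{p}]$, so $\psi^\ideal{p}$ factors through $i_\ideal{p}$ and exhibits an $O_K$-linear isomorphism $\ideal{p}^{-1}\otimes_{O_K}E\isomto \sigma_\ideal{p}^*(E)$ over $L$. Comparing Galois representations via (\ref{theo:classification}) gives $\rho_{E/L}=\rho_{\ideal{p}^{-1}\otimes_{O_K}E/L}=\rho_{\sigma_\ideal{p}^*(E)/L}=\rho_{E/L}^{\sigma_\ideal{p}}$. The primes in $\mrm{Id}_K^\ideal{g}$ are unramified in $L/K$, so by Chebotarev their Frobenius elements exhaust $G(L/K)$, whence $\rho_{E/L}^\sigma=\rho_{E/L}$ for every $\sigma\in G(L/K)$. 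This invariance amounts to $\rho_{E/L}$ vanishing on every commutator $[\tilde\sigma,g]$ with $\tilde\sigma\in G(L^\mrm{ab}/K)$ and $g\in G(L^\mrm{ab}/L)$; because $G(L^\mrm{ab}/L)$ is abelian, such commutators generate the full commutator subgroup $G(L^\mrm{ab}/K^\mrm{ab})$, so $\rho_{E/L}$ factors through $G(L^\mrm{ab}/L)/G(L^\mrm{ab}/K^\mrm{ab})=G(K^\mrm{ab}/L)$, which is the definition of Shimura type.

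For the converse I would first construct $\psi^\ideal{p}$. Assuming $E/L$ is of Shimura type, $\rho_{E/L}$ factors through the abelian group $G(K^\mrm{ab}/L)$ and is therefore invariant under $G(L/K)$-conjugation; together with $c_{\sigma_\ideal{p}^*(E)/L}=[\ideal{p}]^{-1}c_{E/L}=c_{\ideal{p}^{-1}\otimes_{O_K}E/L}$, the classification produces an $L$-isomorphism $\ideal{p}^{-1}\otimes_{O_K}E\cong \sigma_\ideal{p}^*(E)$. Composing with $i_\ideal{p}$ and extending to N\'eron models yields an isogeny $\phi:\mathscr{E}\to \sigma_\ideal{p}^*(\mathscr{E})$ with $\ker\phi=\mathscr{E}[\ideal{p}]$. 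Since the structural $O_K$-action on $\Lie_{\mathscr{E}_\ideal{p}/S_\ideal{p}}$ factors through $O_K/\ideal{p}$, the subgroup scheme $\mathscr{E}_\ideal{p}[\ideal{p}]$, of order $N\ideal{p}$, contains the infinitesimal neighbourhood of the identity and so coincides with the kernel of the $N\ideal{p}$-power relative Frobenius. Hence $\phi|_{S_\ideal{p}}$ and the Frobenius are two $O_K$-linear isogenies sharing a kernel, so they differ by an $O_K$-linear automorphism $\eta\in O_K^\times$. Lifting $\eta$ to a global automorphism of $\sigma_\ideal{p}^*(\mathscr{E})$ and replacing $\phi$ by $\eta^{-1}\circ\phi$ gives the desired $\psi^\ideal{p}$, unique by the preceding lemma.

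For commutativity I would set $A=\sigma_\ideal{l}^*(\psi^\ideal{p})\circ\psi^\ideal{l}$ and $B=\sigma_\ideal{p}^*(\psi^\ideal{l})\circ\psi^\ideal{p}$ and compare modulo $\ideal{p}$, using three identities: $\sigma_\ideal{p}^*(\psi^\ideal{l})|_{S_\ideal{p}}=(\psi^\ideal{l}|_{S_\ideal{p}})^{(N\ideal{p})}$ (since $\sigma_\ideal{p}|_{S_\ideal{p}}$ is the absolute $N\ideal{p}$-Frobenius), $F_{N\ideal{p}}\circ(\psi^\ideal{l}|_{S_\ideal{p}})=(\psi^\ideal{l}|_{S_\ideal{p}})^{(N\ideal{p})}\circ F_{N\ideal{p}}$ (naturality of absolute Frobenius), and $\psi^\ideal{p}|_{S_\ideal{p}}=F_{N\ideal{p}}$ (property (i)). Together they give $A|_{S_\ideal{p}}=B|_{S_\ideal{p}}=(\psi^\ideal{l}|_{S_\ideal{p}})^{(N\ideal{p})}\circ F_{N\ideal{p}}$, and the rigidity argument used in the preceding lemma---the locus in the connected base $S$ where two homomorphisms $\mathscr{E}\to\sigma_{\ideal{p}\ideal{l}}^*(\mathscr{E})$ agree is open and closed---then forces $A=B$ globally. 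The main obstacle is the construction step in the converse direction: the classification only supplies the isomorphism $\ideal{p}^{-1}\otimes_{O_K}E\cong\sigma_\ideal{p}^*(E)$ up to an $O_K^\times$-ambiguity, and one must both uniformly identify $\mathscr{E}_\ideal{p}[\ideal{p}]$ with $\ker F_{N\ideal{p}}$ across split, inert and ramified primes (via the $\Lie$-algebra argument) and kill the residual unit ambiguity by a global automorphism in order to meet the Frobenius-lift condition on the nose.
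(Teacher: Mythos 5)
Your plan contains two real gaps, both at precisely the points where the commutativity hypothesis and the Shimura-type hypothesis actually have to do work, and one of them is fatal to the argument as stated.

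\emph{The direction \textup{(i)}$\Rightarrow$\textup{(ii)}.} You extract from the existence of $\psi^{\ideal{p}}$ only the isomorphism $\ideal{p}^{-1}\otimes_{O_K}E\cong\sigma_\ideal{p}^*(E)$, hence $\rho_{E/L}^{\sigma}=\rho_{E/L}$ for all $\sigma\in G(L/K)$, and then claim this forces $\rho_{E/L}$ to factor through $G(K^{\mrm{ab}}/L)$ because ``commutators $[\tilde\sigma,g]$ with $g\in G(L^{\mrm{ab}}/L)$ generate the full commutator subgroup.'' That group-theoretic step is false: writing $G=G(L^{\mrm{ab}}/K)$ and $N=G(L^{\mrm{ab}}/L)$, the quotient $[G,G]/\overline{[G,N]}$ is a quotient of $\wedge^2\bigl(G(L/K)\bigr)$ and need not vanish when $G(L/K)$ is non-cyclic (the Heisenberg group with $N$ its centre gives $[G,N]=1\ne[G,G]$). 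Correspondingly, $G(L/K)$-invariance of $\rho_{E/L}$ is genuinely weaker than Shimura type; the paper's own footnote about Gross makes exactly this point, noting that invariance of the Hecke character over $H$ only implies Shimura type under the extra hypothesis that $h_K$ is prime to $\#O_K^\times$. In short, you have thrown away information when passing from $\psi^\ideal{p}$ to $\rho^{\sigma_\ideal{p}}=\rho$: the isogeny is a \emph{single} morphism over the connected $S$, which gives a \emph{single} well-defined lift of Frobenius on each component $\Spec(O_{L_i}[\ideal{a}^{-1}])$ of the finite \'etale scheme $\mathscr{E}[\ideal{a}]$; having a well-defined (not merely conjugacy-class) Frobenius at every unramified prime forces every Frobenius conjugacy class in $G(L_i/K)$ to be a singleton, hence by Chebotarev that $L_i/K$ is abelian. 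That is the paper's argument, and the point you are missing is that it exploits the \emph{rigidity} of $\psi^\ideal{p}$ as a morphism over the connected $S$, which is strictly more than an isomorphism class of CM curves.

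\emph{The direction \textup{(ii)}$\Rightarrow$\textup{(i)}, existence of $\psi^\ideal{p}$.} You observe that over $S_\ideal{p}$ the isogeny $\phi=i_\ideal{p}\circ f$ and relative Frobenius differ by an $O_K$-linear automorphism ``$\eta\in O_K^\times$,'' and propose to kill $\eta$ by a global automorphism. But $S_\ideal{p}=\coprod_{\ideal{P}\mid\ideal{p}}\Spec(O_L/\ideal{P})$ is disconnected, so what you actually get is a \emph{tuple} of units $(\epsilon_\ideal{P})_{\ideal{P}\mid\ideal{p}}$, one per residue field, with no a priori reason to coincide. A global automorphism of $\sigma_\ideal{p}^*(\mathscr{E})$ over the connected $S$ is a single unit; you can only absorb the correction if all $\epsilon_\ideal{P}$ agree. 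Proving this uniformity is where the Shimura hypothesis enters in an essential way: the paper compares the candidate lifts $\psi^\ideal{P}=\epsilon_\ideal{P}\phi$ on the finite \'etale scheme $\mathscr{E}[\ideal{g}]$, identifies their common restriction as the canonical Frobenius element of the abelian extensions $L_i/L$, and then concludes $\epsilon_\ideal{P}=\epsilon_{\ideal{P}'}$ using injectivity of $O_K^\times\to(O_K/\ideal{g})^\times$ --- which the paper arranges in advance by replacing $\ideal{g}$ with a suitable power (possible because $\ideal{g}\ne O_K$, which itself needs the earlier result that a Shimura-type curve over $H$ cannot have good reduction everywhere). None of this is in your sketch. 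You do flag ``kill the residual unit ambiguity'' as an obstacle, but you treat it as routine when in fact it is the crux of the argument.

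Your proof of the commutativity of $\psi^\ideal{p}$ and $\psi^\ideal{l}$, by reducing modulo $\ideal{p}$ and invoking functoriality of relative Frobenius plus rigidity, is correct and is a clean alternative to the paper's argument (which instead checks agreement on all $\ideal{a}$-torsion). So once the two gaps above are filled your outline for the commutativity step can be kept as is.
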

	\begin{proof} (i) implies (ii): It is enough to show that the action of $G(L^\mrm{ab}/L)$ on $E[\ideal{a}](L^\mrm{ab})$ factors through $G(K^\mrm{ab}/L)$ for all $\ideal{a}$ divisible by $\ideal{g}$. Since the claim takes place only on the generic fibre, we may replace $S=\Spec(O_L[\ideal{g}^{-1}])$ by $\Spec(O_L[\ideal{a}^{-1}])$ and assume that $\ideal{a}=\ideal{g}$.
		
		Then $\mathscr{E}[\ideal{a}]$ is a finite \'etale $S$-scheme and therefore a finite \'etale $\Spec(O_K[\ideal{a}^{-1}])$-scheme. It follows that $\mathscr{E}[\ideal{a}]=\amalg_i \Spec(O_{L_i}[\ideal{a}^{-1}])$ where each $L_i/K$ is a finite extension unramified away from $\ideal{a}$. For each prime ideal $\ideal{p}$ of $O_K$ prime to $\ideal{a}$, the $\Spec(O_K)$-linear morphism $\varphi_\ideal{p}: \mathscr{E}[\ideal{a}]\longrightarrow \mathscr{E}[\ideal{a}]$ induced by the restriction of $\psi_{\mathscr{E}/L}^\ideal{p}$ to $\mathscr{E}[\ideal{a}]$ lifts the absolute $N\ideal{p}$-power Frobenius map of $\mathscr{E}[\ideal{a}]\times_S S_\ideal{p}$ and therefore its restriction to each $\Spec(O_{L_i}[\ideal{a}^{-1}])$ is the Frobenius element corresponding to $\ideal{p}$. As this is true for all primes prime to $\ideal{a}$ it follows that each of the extensions $L_i/K$ is abelian and that $E/L$ is of Shimura type.
		
		(ii) implies (i): We first note that if $E/L$ is a CM elliptic curve of Shimura type then the fixed ideal $\ideal{g}$ cannot be equal to $O_K$. If this were the case then we would have $L=H$ and the elliptic curve $E/H$ would have good reduction everywhere which is impossible by (\ref{prop:no-good-reduction-shimura}). Moreover, the  assumptions and the claims of the theorem are unchanged after replacing $\ideal{g}$ by some power of itself so that as $\ideal{g}\neq O_K$ we may do so in such a way that the reduction map \[O_K^\times\longrightarrow (O_K/\ideal{g})^\times\] is injective.
		
		Now as $E/L$ is of Shimura type it follows that for each $\sigma\in G(L/K)$ we have $\rho_{\sigma^*(E)/L}=\rho_{E/L}$. Therefore, for each prime $\ideal{p}\in \mrm{Id}_K^\ideal{g}$ we have \[(\rho_{\sigma^*_\ideal{p}(E)/L}, c_{\sigma_\ideal{p}^*(E)/L})=(\rho_{E}, c_{\ideal{p}^{-1}\otimes_{O_K}E/L})=(\rho_{\ideal{p}^{-1}\otimes_{O_K}E}, c_{\ideal{p}^{-1}\otimes_{O_K}E/L}).\] In particular, there exists an isomorphism \[f: \ideal{p}^{-1}\otimes_{O_K} E\isomto \sigma_\ideal{p}^*(E)\] whose extension to the N\'eron models (relative to $S$) we again denote by $f$.
		
		For each prime $\ideal{P}$ of $O_L$ lying over $\ideal{p}$ there exists a unique element $\epsilon_\ideal{P}\in O_K^\times$ such that \[f_\ideal{P}:=\epsilon_\ideal{P} f: \ideal{p}^{-1}\otimes_{O_K} \mathscr{E}\isomto \sigma_\ideal{p}^*(\mathscr{E})\] reduces modulo $\ideal{P}$ to the unique isomorphism \[\ideal{p}^{-1}\otimes_{O_K} \mathscr{E}_\ideal{P}\isomto \mrm{Fr}_{S_\ideal{P}}^{N\ideal{p}*}(\mathscr{E}_\ideal{P})\] whose composition with $i_\ideal{p}: \mathscr{E}\longrightarrow \ideal{p}^{-1}\otimes_{O_K}\mathscr{E}$ is the $N\ideal{p}$-power relative Frobenius. We write $\psi^\ideal{P}=i_\ideal{p}\circ f_\ideal{P}: \mathscr{E}\to \sigma_\ideal{p}^*(\mathscr{E})$ for this composition.
		
		Since $E/L$ is an elliptic curve of Shimura type \[\mathscr{E}[\ideal{g}]=\amalg_{i} \Spec(O_{L_i}[\ideal{g}^{-1}])\] where each $L_i/K$ is an abelian extension, unramified away from $\ideal{g}$. Let $\varphi^{\ideal{p}}: \mathscr{E}[\ideal{g}]\to\sigma_\ideal{p}^*(\mathscr{E}[\ideal{g}])$ be the sum of the Frobenius elements at $\ideal{p}$ of the extensions $L_i/L$.
		
		As $\Spec(O_L[\ideal{g}^{-1}])$ is connected, the reduction map \[\Hom_{O_L[\ideal{g}^{-1}]}(\mathscr{E}[\ideal{g}], \sigma_\ideal{p}^*(\mathscr{E})[\ideal{g}])\to \Hom_{O_L/\ideal{P}}(\mathscr{E}_\ideal{P}[\ideal{g}], \sigma_\ideal{p}^*(\mathscr{E})_\ideal{P}[\ideal{g}])\] is injective and by definition the images of $\psi^\ideal{P}|_{\mathscr{E}[\ideal{g}]}$ and $\varphi^\ideal{p}$ coincide. Hence, $\psi^\ideal{P}|_{\mathscr{E}[\ideal{g}]}$ depends only on $\ideal{p}$. However, for $\ideal{P}, \ideal{P}'$ each dividing $\ideal{p}$, we have $\psi^{\ideal{P}'}=\epsilon \psi^{\ideal{P}}$ for some $\epsilon \in O_K^\times.$ As \[\psi^\ideal{P}|_{\mathscr{E}[\ideal{g}]}=\psi^{\ideal{P}'}|_{\mathscr{E}[\ideal{g}]}=\epsilon\psi^{\ideal{P}}|_{\mathscr{E}[\ideal{g}]}\] and $O_K^\times \to (O_K/\ideal{g})^\times$ is injective, it follows that $\epsilon=1$ and that $\psi^{\ideal{P}}$ depends only on $\ideal{p}.$ We write \[\psi^{\ideal{p}} : \mathscr{E}\longrightarrow \sigma_\ideal{p}^*(\mathscr{E})\] for this common value, which lifts the $N\ideal{p}$-power Frobenius modulo $\ideal{p}$ by construction.
		
		For a pair of prime ideals $\ideal{p}$, $\ideal{l}\in \mrm{Id}_K^\ideal{g}$ consider the diagram \[\xymatrix{\mathscr{E}\ar[r]^{\psi^\ideal{l}}\ar[d]_{\psi^\ideal{p}} & \sigma_\ideal{l}^*(\mathscr{E})\ar[d]^{\sigma_\ideal{l}^*(\psi^\ideal{p})}\\
			\sigma_\ideal{p}^*(\mathscr{E})\ar[r]^{\sigma_\ideal{p}^*(\psi^\ideal{l})} & \sigma_{\ideal{p}\ideal{l}}^*(\mathscr{E})}\] and let $h: \mathscr{E}\longrightarrow \sigma_{\ideal{p}\ideal{l}}^*(\mathscr{E})$ be the difference of the two compositions from top left to bottom right. The diagram commutes when restricted to the $\ideal{a}$-torsion for any ideal $\ideal{a}$ prime to $\ideal{g}$, as both compositions induce the `Frobenius element' corresponding to $\ideal{p}\ideal{l}$ of the finite \'etale $S$-schemes $\mathscr{E}[\ideal{a}]$. Therefore $\mathscr{E}[\ideal{a}]\subset \ker(h)$ for all $\ideal{a}$ prime to $\ideal{g}$ and this is only possible if $\ker(h)=\mathscr{E}$ so that $h=0$ and the diagram commutes.
	\end{proof}
	
	\section{Minimal models}
	
	\subsection{} \label{subsec:lambda-module-for-cm-curves} We now consider the existence of certain global minimal models of elliptic curves of Shimura type. First, we consider some consequences of the existence of commuting families of Frobenius lifts.
	
	We continue with the notation of (\ref{subsec:cm-over-a-field-shimura-integral}) but will also assume that $E/L$ is of Shimura type. Thus for each $\ideal{p}\nmid \ideal{g}$ there is a (unique) isomorphism \[\nu_\ideal{p}: \ideal{p}^{-1}\otimes_{O_K} \mathscr{E}\longrightarrow \sigma^*(\mathscr{E})\] with the property that $\nu_\ideal{p}\circ i_\ideal{p}=\psi^\ideal{p}: \mathscr{E}\longrightarrow \sigma_\ideal{p}^*(\mathscr{E})$ lifts the $N\ideal{p}$-power relative Frobenius. For a pair of primes $\ideal{p}$, $\ideal{l}$ prime to $\ideal{g}$ semi-commutativity of the isogenies $\psi^{\ideal{p}}$ and $\psi^{\ideal{l}}$ (\ref{theo:shimura-is-lambda}) expressed in terms of the isomorphisms $\nu_{\ideal{l}}$ and $\nu_\ideal{p}$ becomes: \begin{equation}\sigma_{\ideal{l}}^*(\nu_\ideal{p})\circ (\ideal{p}^{-1}\otimes_{O_K}\nu_{\ideal{l}})=\sigma_{\ideal{p}}^*(\nu_{\ideal{l}})\circ (\ideal{l}^{-1}\otimes_{O_K}\nu_{\ideal{p}}).\label{eqn:frob-commute-prime}\end{equation} For any ideal $\ideal{a}\in \mrm{Id}_{O_K}^\ideal{g}$, choosing a prime factorisation of $\ideal{a}$, we may define isomorphisms \[\nu_\ideal{a}: \ideal{a}^{-1}\otimes_{O_K}\mathscr{E}\isomto \sigma_\ideal{a}^*(\mathscr{E})\] by composing the $\nu_\ideal{p}$ the appropriate number of times for for $\ideal{p}|\ideal{a}$. The resulting isomorphism $\nu_\ideal{a}$ is independent of the order of the composition by virtue of (\ref{eqn:frob-commute-prime}) and for any pair of ideals $\ideal{a}$ and $\ideal{b}$ prime to $\ideal{g}$ they satisfy: \begin{equation}\sigma_{\ideal{a}}^*(\nu_\ideal{b})\circ (\ideal{b}^{-1}\otimes_{O_K}\nu_{\ideal{a}})=\sigma_{\ideal{b}}^*(\nu_{\ideal{a}})\circ (\ideal{a}^{-1}\otimes_{O_K}\nu_{\ideal{b}}).\label{eqn:frob-commute-all}\end{equation}
	
	\subsection{} If $\ideal{a}$ is an ideal prime to $\ideal{g}$ such that $\sigma_{\ideal{a}}=\mathrm{id}_L$ then $\nu_{\ideal{a}}$ is an isomorphism \[\nu_\ideal{a}: \ideal{a}^{-1}\otimes_{O_K}\mathscr{E}\longrightarrow \mathscr{E}\] and so must be of the form $l(\ideal{a})\otimes \mrm{id}_{\mathscr{E}}$ where $l(\ideal{a})\in O_K$ is a generator of $\ideal{a}$. Thus if $P_{L/K}^{\ideal{g}}$ denotes the monoid of ideals $\ideal{a}$ of $O_K$ which are prime to $\ideal{g}$ and which satisfy $\sigma_{\ideal{a}}=\mathrm{id}_L\in G(L/K)$ we obtain a multiplicative map \[l:P_{L/K}^{\ideal{g}}\longrightarrow O_K: \ideal{a}\mapsto l(\ideal{a})\] satisfying $l(\ideal{a})\cdot O_K=\ideal{a}$.
	
	If $\ideal{f}$ is an ideal of $O_K$ with the property that the group scheme $E[\ideal{f}]$ is constant then $\mathscr{E}[\ideal{f}]$ is also constant and the composition \[\mathscr{E}[\ideal{f}]\longrightarrow \ideal{a}^{-1}\otimes_{O_K}\mathscr{E}[\ideal{f}]\stackrel{\nu_\ideal{a}}{\longrightarrow} \mathscr{E}[\ideal{f}]\] is multiplication by $l(\ideal{a})$. However, it is also equal to the sum of the Frobenius elements of the connected components of $\mathscr{E}[\ideal{f}]$ which, as $\mathscr{E}[\ideal{f}]$ is constant, must be the identity. Therefore if $E[\ideal{f}]$ is constant, then $l$ satisfies $l(\ideal{a})=1\bmod \ideal{f}$.
	
	\subsection{} By the N\'eron mapping property the isomorphisms $\nu_\ideal{a}$ extend to isomorphisms on the full N\'eron model over $\Spec(O_L)$ (which is no longer an elliptic curve, only a smooth one dimensional group scheme) \[\nu_\ideal{a}: \ideal{a}^{-1}\otimes_{O_K}\mrm{Ner}_{O_L}(E)=\mrm{Ner}_{O_L}( \ideal{a}^{-1}\otimes_{O_K}E)\isomto \sigma_\ideal{a}^*(\mrm{Ner}_{O_L}(E))\] satisfying the same commutativity condition. Writing \[T=\underline{\Lie}_{\mrm{Ner}_{O_L}(E)/O_L}\] for the Lie algebra of the N\'eron model, which is a projective rank one $O_L$-module, the isomorphisms $\nu_\ideal{a}$ induce $O_L$-isomorphisms (which we denote by the same letter) \[\nu_\ideal{a}: \ideal{a}^{-1}\otimes_{O_K}T\isomto \sigma_\ideal{a}^*(T)\] for each $\ideal{a}\in \mrm{Id}_{O_K}^\ideal{g}$. These satisfy the same commutativity condition (\ref{eqn:frob-commute-all}) as the (original) $\nu_\ideal{a}$ and moreover if $\ideal{a}\in P_{L/K}^\ideal{g}$ then $\nu_\ideal{a}=l(\ideal{a})\otimes_{O_K}\mathrm{id}_T.$
	
	\begin{theo}\label{theo:shimura-curves-minimal-model} In the notation of \textup{(\ref{subsec:lambda-module-for-cm-curves})}, if $L=K(\ideal{f})$ is a ray class field and $E[\ideal{f}]$ is constant then $T\otimes_{O_K}O_K[\ideal{f}^{-1}]$ is free. In other words, $E/\Spec(K(\ideal{f}))$ admits a global minimal model away from $\ideal{f}$.
	\end{theo}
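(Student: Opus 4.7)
The plan is to equip $T\otimes_{O_K}O_K[\ideal{f}^{-1}]$ with Galois descent data along the finite étale cover $\Spec O_L[\ideal{f}^{-1}]\to\Spec O_K[\ideal{f}^{-1}]$—which is étale because the ramification of $L=K(\ideal{f})/K$ is supported on $\ideal{f}$—and then to conclude, using the principal ideal theorem of the appendix, that any rank-one projective $O_K[\ideal{f}^{-1}]$-module becomes trivial after base change to $O_L[\ideal{f}^{-1}]$.

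First, apply the principal ideal theorem to $L=K(\ideal{f})$, obtaining generators $l(\ideal{a})\in L^\times$ for each ideal $\ideal{a}$ of $O_K$ prime to $\ideal{f}$, with $l(\ideal{a})\,O_L=\ideal{a}\,O_L$ and the cocycle identity $l(\ideal{a}\ideal{b})=l(\ideal{a})\,\sigma_\ideal{a}(l(\ideal{b}))$. Since $E[\ideal{f}]$ is constant, the Shimura generators $l_{\mrm{sh}}(\ideal{c})$ for $\ideal{c}\in P_{L/K}^{\ideal{g}\ideal{f}}$ lie in $\{x\in O_K:x\equiv 1\bmod\ideal{f}\}$ by the calculation of \S6.2, and one may arrange that the appendix's $l$ agrees with $l_{\mrm{sh}}$ on $P_{L/K}^{\ideal{g}\ideal{f}}$. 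For each $\ideal{a}$ prime to $\ideal{g}\ideal{f}$, define an $O_L$-module isomorphism $\phi_\ideal{a}:T\to\sigma_\ideal{a}^*(T)$ by composing multiplication by $l(\ideal{a})^{-1}$ (an $O_L$-iso $T\isomto\ideal{a}^{-1}\otimes_{O_K}T$, since $l(\ideal{a})\,O_L=\ideal{a}\,O_L$) with $\nu_\ideal{a}$. Combining (\ref{eqn:frob-commute-all}) with the appendix's cocycle yields $\phi_{\ideal{a}\ideal{b}}=\sigma_\ideal{a}^*(\phi_\ideal{b})\circ\phi_\ideal{a}$, while the matching $l=l_{\mrm{sh}}$ on $P_{L/K}^{\ideal{g}\ideal{f}}$ forces $\phi_\ideal{a}=\mathrm{id}_T$ whenever $\sigma_\ideal{a}=1$. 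Hence $\phi_\ideal{a}$ depends only on $\sigma_\ideal{a}$, giving Galois descent data $(\phi_\sigma)_{\sigma\in G(L/K)}$; by faithfully flat descent, $T\otimes_{O_K}O_K[\ideal{f}^{-1}]$ is the pullback of a rank-one projective $O_K[\ideal{f}^{-1}]$-module $T_0$.

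Finally, observe that the pullback map $\mathrm{Pic}(O_K[\ideal{f}^{-1}])\to\mathrm{Pic}(O_L[\ideal{f}^{-1}])$ is identically zero: every class in $\mathrm{Pic}(O_K[\ideal{f}^{-1}])$ is represented by an integral ideal $\ideal{a}\subset O_K$ prime to $\ideal{f}$, and by the principal ideal theorem $\ideal{a}\,O_L=l(\ideal{a})\,O_L$ is principal in $O_L$. Therefore the image of $[T_0]$ in $\mathrm{Pic}(O_L[\ideal{f}^{-1}])$, which is $[T\otimes_{O_K}O_K[\ideal{f}^{-1}]]$, vanishes, and the module is free. The main obstacle is the normalization step—aligning the appendix's $l$ with the Shimura $l_{\mrm{sh}}$ on $P_{L/K}^{\ideal{g}\ideal{f}}$—together with the attendant cocycle verification; this is precisely where the hypothesis that $E[\ideal{f}]$ is constant enters decisively, through its consequence that $l_{\mrm{sh}}(\ideal{c})\equiv 1\bmod\ideal{f}$.
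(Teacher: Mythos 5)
Your proof is correct and follows essentially the same route as the paper: apply the appendix's principal ideal theorem to the Shimura generators $l$ on $P_{L/K}^{\ideal{g}}$ (the hypothesis that $E[\ideal{f}]$ is constant supplying the required congruence $l(\ideal{a})\equiv 1\bmod\ideal{f}$), twist the $\nu_\ideal{a}$ by $l(\ideal{a})^{-1}$ to produce Galois descent data on $T$ along the finite \'etale cover $\Spec O_L[\ideal{f}^{-1}]\to\Spec O_K[\ideal{f}^{-1}]$, descend to a rank-one module $T_0$, and conclude by triviality of the extension-of-scalars map on Picard groups. The only cosmetic difference is the final step, where the paper cites $H\subset K(\ideal{f})$ together with the classical principal ideal theorem, whereas you deduce the same vanishing of $\mathrm{Pic}(O_K[\ideal{f}^{-1}])\to\mathrm{Pic}(O_L[\ideal{f}^{-1}])$ directly from condition (i) of the appendix theorem, which is a slightly more self-contained formulation of the same fact.
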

	\begin{proof} We apply (\ref{prop:tannaka-result-general}) to extend the map $l: P^\ideal{g}_{L/K}\longrightarrow O_K$ to a map \[l:\mrm{Id}_{O_K}^{\ideal{g}}\longrightarrow O_{K(\ideal{f})}\] satisfying \begin{equation} l(\ideal{a})\cdot O_{K(\ideal{f})}=\ideal{a}\cdot O_{K(\ideal{f})} \quad \text{ and } \quad l(\ideal{a}\ideal{b})=l(\ideal{a})\sigma_\ideal{a}(l(\ideal{b}))\label{eqn:l-com-condition}\end{equation} for all $\ideal{a}, \ideal{b}\in \mrm{Id}_K^{\ideal{g}}$.
		
	We then define, for each $\ideal{a}\in \mathrm{Id}_{K}^\ideal{g}$, an isomorphism $t_\ideal{a}: T\longrightarrow \sigma_\ideal{a}^*(T)$ by \[T\stackrel{l(\ideal{a})^{-1}\otimes \mrm{id}}{\longrightarrow} \ideal{a}^{-1}\otimes_{O_K}T\stackrel{\nu_\ideal{a}}{\longrightarrow} \sigma^*_a(T).\] If $\ideal{a}\in \mrm{P}_{L/K}^{\ideal{g}}$ then by (\ref{eqn:l-com-condition}) we have \[t_\ideal{a}=l(\ideal{a})^{-1}\otimes l(\ideal{a})=\mrm{id}_T.\] This, combined with the commutativity conditions (\ref{eqn:frob-commute-all}) on the $\nu_\ideal{a}$ and (\ref{eqn:l-com-condition}) on the $l(\ideal{a})$, shows that $t_\ideal{a}$ depends only on the class $\sigma_\ideal{a}\in G(K(\ideal{f})/K)$, so that we may instead write $t_{\ideal{a}}=t_{\sigma_\ideal{a}}$.
		
	We now have a collection of isomorphisms $t_\sigma: T\isomto \sigma^*(T)$ indexed by $\sigma\in G(K(\ideal{f})/K)$ which satisfy: \[t_{\mrm{id}_{K(\ideal{f})}}=\mrm{id}_T \quad \text{ and } \quad t_{\sigma\tau}=t_\sigma\circ \sigma^*(t_\tau)\] for $\sigma, \tau\in G(K(\ideal{f})/K)$. In other words, the isomorphisms $t_\sigma$ define Galois descent data on $T$ relative to $O_{K}\longrightarrow O_{K(\ideal{f})}$. The homomorphism $O_K\longrightarrow O_{K(\ideal{f})}$ is finite and \'etale after inverting $\ideal{f}$ and after doing so the isomorphisms $t_\sigma$ define actual decent data relative to $O_{K}[\ideal{f}^{-1}]\longrightarrow O_{K(\ideal{f})}[\ideal{f}^{-1}]$. Therefore there exists an $O_K[\ideal{f}^{-1}]$-module $T_0$ such that \[T_0\otimes_{O_K[\ideal{f}^{-1}]} O_{K(\ideal{f})}[\ideal{f}^{-1}]\isomto T\otimes_{O_{K(\ideal{f})}}O_{K(\ideal{f})}[\ideal{f}^{-1}].\] However, as $K(\ideal{f})$ contains the Hilbert class field $H=K(1)$, all rank one projective $O_{K}[\ideal{f}^{-1}]$-modules become free after base change to $O_{K(\ideal{f})}[\ideal{f}^{-1}]$, it follows that \[T_0\otimes_{O_K}O_{K}[\ideal{f}^{-1}]\isomto T\otimes_{O_K}O_{K}[\ideal{f}^{-1}]\] is free.
	\end{proof}
	
	The above result is in fact really only interesting when $\ideal{f}$ is small. In fact, if $\ideal{f}$ has the property that $O_K^\times\longrightarrow (O_K/\ideal{f})^\times$ is injective then there is (up to automorphisms of $K(\ideal{f})$) only one CM elliptic curve over $K(\ideal{f})$ and by (\ref{coro:good-red-const}) it has good reduction everywhere. Thus we get a CM elliptic curve $\mathscr{E}/\Spec(O_{K(\ideal{f})})$ which is nothing more than the universal CM elliptic curve with level-$\ideal{f}$ structure.\footnote{$O_{K(\ideal{f})}$ being the moduli stack of such CM elliptic curves.} However, if $\ideal{f}=(1)$ we obtain the following corollary which is a strengthening of a result of Gross (Corollary 4.4 of \cite{Gross82}) who proved it for elliptic curves of Shimura type\footnote{Technically, Gross' result is for CM elliptic curves over defined over the Hilbert class field of an imaginary quadratic with prime discriminant whose Hecke character is $G(H/K)$-invariant. However, these conditions actually imply that $E/H$ is a CM elliptic curve of Shimura type. Indeed, the primality of the discriminant implies that the class number of $K$ is prime to the order of $O_K^\times$, which combined with the $G(H/K)$-invariance of the Hecke character implies that $E/H$ is of Shimura type. For a result along these lines see Proposition 2 of \cite{Gilles85}.} over $H$ in the case where $K$ has prime discriminant:
	
	\begin{coro} If $E/H$ is an elliptic curve of Shimura type then $E$ admits a global minimal model.
	\end{coro}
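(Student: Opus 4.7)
The plan is to obtain the corollary as the special case $\ideal{f}=(1)$ of Theorem~\ref{theo:shimura-curves-minimal-model}. By the conventions fixed in the Notation section, the ray class field of conductor $(1)$ is exactly the Hilbert class field $H=K((1))$, so taking $L=H$ aligns the hypothesis $L=K(\ideal{f})$ of that theorem with the present setting. The auxiliary ideal $\ideal{g}$ required by (\ref{subsec:cm-over-a-field-shimura-integral}) can be chosen to be any ideal divisible by the primes of bad reduction of $E$; the requirement that $\Spec(O_H[\ideal{g}^{-1}])$ be unramified over $\Spec(O_K)$ is automatic since $H/K$ is unramified at every finite place.

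Next, I would verify the remaining hypothesis that $E[\ideal{f}]$ is constant over $L$. When $\ideal{f}=(1)$, by the definition from \S1 we have $E[(1)](S')=\{x\in E(S'):[1]_E(x)=0\}=\{0\}$, so $E[\ideal{f}]$ is the zero section and is trivially constant. All hypotheses of Theorem~\ref{theo:shimura-curves-minimal-model} are therefore met, and its conclusion says that $T\otimes_{O_K}O_K[\ideal{f}^{-1}]$ is free; for $\ideal{f}=(1)$ this reduces to the unqualified statement that $T$ itself is a free $O_H$-module.

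Unpacking the definition from (\ref{subsec:lambda-module-for-cm-curves}), one has $T=\underline{\Lie}_{\mrm{Ner}_{O_H}(E)/O_H}$, so freeness of $T$ as a rank-one $O_H$-module is exactly the assertion that the N\'eron model of $E/H$ admits a nowhere-vanishing invariant differential over all of $\Spec(O_H)$, which is the existence of a global minimal Weierstrass model. The real content has already been absorbed into Theorem~\ref{theo:shimura-curves-minimal-model} (and, through it, into the principal ideal theorem of the appendix), so no further obstacle remains; the sole point to make explicit is that in the case $\ideal{f}=(1)$ both the constancy hypothesis on $E[\ideal{f}]$ and the localisation ``away from $\ideal{f}$'' become vacuous, yielding the corollary in its stated form.
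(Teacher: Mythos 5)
Your proof is correct and takes essentially the same approach as the paper: apply Theorem~\ref{theo:shimura-curves-minimal-model} with $\ideal{f}=(1)$ and observe that $E[(1)]$ is trivially constant so the hypothesis is vacuous. The paper's own proof is a one-liner saying exactly this; your version simply unpacks the details a bit more.
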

	\begin{proof} This is just (\ref{theo:shimura-curves-minimal-model}) with $\ideal{f}=(1)$, noting that $E[1]=\Spec(H)$ is always constant.
	\end{proof}

	\appendix
	
	\section{A principal ideal theorem} The goal of this appendix is to prove a principal ideal theorem for number fields. Before explaining the result, let us first introduce some notation.
	
	\subsection{} Let $K$ be a number field with ring of integers $O_K$ and let $\ideal{f}$ be an ideal of $O_K$. If $a\in K^\times$ we write $a=1\bmod \ideal{f}$ to mean that $\ideal{f}$ divides the principal ideal $(a-1)$. For a pair of fractional ideals $\ideal{a}, \ideal{b}$ of $K$ we write $\ideal{a}=\ideal{b}\bmod \ideal{f}$ to mean that $\ideal{a}\ideal{b}^{-1}=(a)$ with $a=1\bmod \ideal{f}$.
	
	Now if $L/K$ is a finite extension, totally split at all infinite places of $K$, we denote by $\ideal{f}_{L/K}$, $\ideal{D}_{L/K}$ and $\ideal{F}_{L/K}$ the conductor, different and genus ideal of the extension $L/K$ (see \S 1 of \cite{Terada1952} for precise definitions). These are integral ideals of $K$, $L$ and $K$ respectively and we have \[\ideal{f}_{L/K}=\ideal{D}_{L/K}\ideal{F}_{L/K}.\] Given an intermediate extension $L/K'/K$ we also define \[\ideal{f}_{L/K'/K}=\ideal{D}_{L/K'}\ideal{F}_{L/K}.\] While a priori is an ideal of $L$, is in fact an ideal of $K'$ (see \cite{Tannaka58}). We also note that the ideals $\ideal{f}_{L/K}$, $\ideal{F}_{L/K}$, and $\ideal{f}_{L/K'/K}$ are all invariant under $G(L/K)$.
	
	We will be interested only in the case where $L=K(\ideal{f})$ is the ray class field of conductor $\ideal{f}$. Note while the conductor of $K(\ideal{f})/K$ always divides $\ideal{f}$ it may do so strictly. However, if $L/K$ is a (wide) ray class field and its conductor is $\ideal{f}_{L/K}$ then $K(\ideal{f}_{L/K})=L$.
	
	For other notation we refer the reader to the section following the introduction.
	
	\subsection{} In \cite{Tannaka58} Tannaka proved the following:
	\begin{theo*}[Tannaka] Let $K$ be a number field and $L/K$ be a ray class field with conductor $\ideal{f}_{L/K}$ and genus ideal $\ideal{F}_{L/K}$. Then there exist elements $\Theta(\ideal{a})\in L^\times$, indexed by ideals $\ideal{a}\in \mrm{Id}_K^{\ideal{f}_{L/K}}$ such that: \begin{enumerate}[label=\textup{(\roman*)}]
			\item $\Theta(\ideal{a})\cdot O_L=\ideal{a}\cdot O_L$,
			\item $\Theta(\ideal{a})=1\bmod \ideal{F}_{L/K}$, and
			\item $\Theta(\ideal{a})\sigma_\ideal{a}(\Theta(\ideal{b}))=\Theta(\ideal{a}\ideal{b}) \bmod O_K^\times.$
		\end{enumerate}
	\end{theo*}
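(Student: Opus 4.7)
The argument follows Tannaka \cite{Tannaka58} and splits naturally into three sequential tasks: first establish (i) via the Hauptidealsatz, then (ii) via the defining property of the genus ideal, and finally (iii) by controlling the residual ambiguity.

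For (i): since $L = K(\ideal{f})$ contains the Hilbert class field $H = K((1))$, the Hauptidealsatz applied to $H/K$ forces every ideal $\ideal{a}$ of $K$ to become principal in $H$, and hence in $L$. So for each $\ideal{a} \in \mrm{Id}_K^{\ideal{f}_{L/K}}$ I can pick some $\alpha(\ideal{a}) \in L^\times$ with $\alpha(\ideal{a})\cdot O_L = \ideal{a}\cdot O_L$, unique up to multiplication by $O_L^\times$. For (ii), the genus ideal $\ideal{F}_{L/K}$ is---essentially by construction in Terada \cite{Terada1952}---the largest divisor of the conductor $\ideal{f}_{L/K}$ with the property that every such generator $\alpha(\ideal{a})$ can be adjusted by a unit $u \in O_L^\times$ so that $u\cdot\alpha(\ideal{a}) \equiv 1 \bmod \ideal{F}_{L/K}$. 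Setting $\Theta(\ideal{a}) := u\cdot \alpha(\ideal{a})$ then yields simultaneously (i) and (ii), with residual ambiguity exactly $O_L^\times \cap (1 + \ideal{F}_{L/K} O_L)$.

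For (iii), form the cocycle
\[
c(\ideal{a},\ideal{b}) \;:=\; \Theta(\ideal{a})\,\sigma_\ideal{a}(\Theta(\ideal{b}))\cdot \Theta(\ideal{a}\ideal{b})^{-1}.
\]
Both $\Theta(\ideal{a}\ideal{b})$ and $\Theta(\ideal{a})\sigma_\ideal{a}(\Theta(\ideal{b}))$ generate $\ideal{a}\ideal{b}\cdot O_L$---the latter because $\sigma_\ideal{a}(\ideal{b} O_L) = \ideal{b} O_L$, as $\ideal{b}$ is an ideal of $K$---so $c(\ideal{a},\ideal{b}) \in O_L^\times$. By the $G(L/K)$-invariance of $\ideal{F}_{L/K}$, one has $\sigma_\ideal{a}(\Theta(\ideal{b})) \equiv 1 \bmod \ideal{F}_{L/K}$, and hence $c(\ideal{a},\ideal{b}) \equiv 1 \bmod \ideal{F}_{L/K}$.

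The main obstacle, and the genuine content of the appendix, is then the inclusion
\[
O_L^\times \cap (1 + \ideal{F}_{L/K} O_L) \;\subseteq\; O_K^\times,
\]
which would immediately give $c(\ideal{a},\ideal{b}) \in O_K^\times$ and complete (iii). I expect this to carry the substantive class field theory: the plan is to analyse the local components of such a unit at each ramified prime $\ideal{p}$ of $K$, using that $\ideal{F}_{L/K}$ is divisible by an appropriately high power of $\ideal{p}$ to trap the component inside $O_{K_\ideal{p}}^\times$, and then to patch the local conclusions via the reciprocity map for $L/K$ applied to the \emph{decomposition} ideal structure at each prime. This local-global argument is Tannaka's, and it is the one step in the proof that cannot be streamlined by a purely formal cohomological manipulation.
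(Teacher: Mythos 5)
There is a genuine gap in step (iii), and it is fatal. The inclusion you propose, $O_L^\times \cap (1 + \ideal{F}_{L/K} O_L) \subseteq O_K^\times$, is false in general. Take $K$ imaginary quadratic with $h_K > 1$ and $L = H$ the Hilbert class field: then $\ideal{f}_{H/K} = \ideal{F}_{H/K} = (1)$, so the left-hand side is all of $O_H^\times$, which typically has positive unit rank (e.g.\ for $K = \Q(\sqrt{-5})$, $H = \Q(\sqrt{5},i)$ contains the fundamental unit of $\Q(\sqrt{5})$), whereas $O_K^\times$ is finite. Consequently, \emph{an arbitrary} choice of generators satisfying (i) and (ii) will not satisfy (iii); the cocycle $c(\ideal{a},\ideal{b})$ is a unit congruent to $1 \bmod \ideal{F}_{L/K}$ but has no reason to lie in $O_K^\times$. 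The content of the theorem is precisely that one can make a \emph{coherent} choice of $\Theta(\ideal{a})$ for which the cocycle is trivial modulo $O_K^\times$; this cannot be reduced to a pointwise constraint on units. Your claim in (ii) that the genus ideal is ``essentially by construction'' the largest ideal such that generators can be adjusted to be $\equiv 1 \bmod \ideal{F}_{L/K}$ is also not a definition --- it is itself a nontrivial theorem (Tannaka's Theorem 3, which the paper states as Theorem A.7 and invokes), and it already rests on Terada's Norm Theorem.

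The paper does not in fact prove Tannaka's theorem directly; it proves the strengthening (\ref{prop:tannaka-result-general}) and observes that Tannaka's statement follows by specialising the initial data. The construction there is the one you cannot avoid: one fixes primes $\ideal{p}_1,\dots,\ideal{p}_r$ generating $G(L/K)$, produces elements $\alpha_i$, $\beta_i$, $A$ by applying Hasse's Norm Theorem, Noether's Theorem 90, and Terada's Norm Theorem in the cyclic subextensions $K_i/K$, patches them using Tannaka's Theorem 3 to obtain a well-normalised $\Theta_i = \alpha_i A\sigma_i(A)^{-1}$ for each $\ideal{p}_i$, and then defines $\Theta(\ideal{a})$ via an explicit group-ring weight $w_{i,x_i} \in \Z[G(L/K)]$ applied to the $\Theta_i$'s, so that the cocycle relation can be verified by direct computation (equation (\ref{eqn:tannaka})). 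The multiplicativity in (iii) is built in by that combinatorial formula, not recovered after the fact by trapping a unit.
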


In this appendix we will prove the following strengthening of Tannaka's Theorem:
	\begin{theo}\label{prop:tannaka-result-general} Let $K$ be a number field, let $L/K$ be a ray class field with conductor $\ideal{f}=\ideal{f}_{L/K}$, let $\ideal{g}$ be an ideal divisible by $\ideal{f}$ and let \[l: \mrm{P}^{\ideal{g}, \ideal{f}}_K\longrightarrow K^\times: \ideal{a}\mapsto l(\ideal{a})\] be a homomorphism such that $l(\ideal{a})\cdot O_K=\ideal{a}\subset K$ and such that $l(\ideal{a})=1\bmod \ideal{f}_{L/K}$. Then $l$ can be extended to a map \[l: \mrm{Id}^{\ideal{g}}_K\longrightarrow L^\times: \ideal{a} \mapsto l(\ideal{a})\] such that:
		\begin{enumerate}[label=\textup{(\roman*)}]
			\item $l(\ideal{a})\cdot O_{L}=\ideal{a}\cdot O_{L}$,
			\item $l(\ideal{a})=1\bmod \ideal{F}_{L/K}$ and
			\item for all $\ideal{a}, \ideal{b}\in \mrm{Id}^{\ideal{g}}_K$ we have $l(\ideal{a}\ideal{b})=l(\ideal{a})\sigma_{\ideal{a}}(l(\ideal{b})).$
		\end{enumerate}
	\end{theo}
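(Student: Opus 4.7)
The plan is to reduce the problem to a 2-cocycle trivialisation using Tannaka's theorem plus the given $l$ on $P^{\ideal{g}, \ideal{f}}_K$, and then to kill this cocycle via class field theory. Throughout, write $O_{L,\ideal{F}_{L/K}}^\times := \{u \in O_L^\times : u \equiv 1 \bmod \ideal{F}_{L/K}\}$ for the corresponding group of principal units.

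First, apply Tannaka's theorem to obtain elements $\Theta(\ideal{a}) \in L^\times$ for $\ideal{a} \in \mrm{Id}_K^{\ideal{g}}$ satisfying (i), (ii) and (iii) modulo $O_K^\times$. Since $L$ is the ray class field of conductor $\ideal{f}$, the Frobenius map $\mrm{Id}_K^\ideal{g} \twoheadrightarrow G(L/K)$ has kernel exactly $P^{\ideal{g}, \ideal{f}}_K$, so picking a set-theoretic section $\sigma \mapsto \ideal{a}_\sigma$ (with $\ideal{a}_{\mrm{id}} = O_K$) yields a unique factorisation $\ideal{a} = \ideal{a}_{\sigma_\ideal{a}} \ideal{b}$ with $\ideal{b} \in P^{\ideal{g}, \ideal{f}}_K$, and defines a 2-cocycle $\ideal{c}(\sigma,\tau) := \ideal{a}_\sigma \ideal{a}_\tau \ideal{a}_{\sigma\tau}^{-1}$ valued in $P^{\ideal{g}, \ideal{f}}_K$. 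A preliminary adjustment of Tannaka's $\Theta$ by a cochain with values in $O_{L, \ideal{F}_{L/K}}^\times$ allows one to arrange that $\Theta(\ideal{b}) = l(\ideal{b})$ for $\ideal{b} \in P^{\ideal{g}, \ideal{f}}_K$; verifying that this adjustment can be performed while preserving (iii) mod $O_K^\times$ is the first delicate step, and follows from Tannaka's theorem applied on $P^{\ideal{g}, \ideal{f}}_K$ combined with the strict multiplicativity of the given $l$.

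Setting $T_\sigma := \Theta(\ideal{a}_\sigma)$, the candidate extension is
\[
l(\ideal{a}) := l(\ideal{b})\, T_{\sigma_\ideal{a}}, \qquad \ideal{a} = \ideal{a}_{\sigma_\ideal{a}}\ideal{b}.
\]
Conditions (i) and (ii) hold by construction (using $\ideal{F}_{L/K} \mid \ideal{f}$ for (ii)), and a direct computation shows (iii) is automatic whenever one of the factors lies in $P^{\ideal{g},\ideal{f}}_K$. In the remaining case, (iii) reduces to the family of identities
\[
T_\sigma\,\sigma(T_\tau) = l(\ideal{c}(\sigma,\tau))\, T_{\sigma\tau}, \qquad \sigma, \tau \in G(L/K).
\]
By the weak form of (iii) and the normalisation $\Theta|_{P^{\ideal{g},\ideal{f}}_K} = l$, the defect $\epsilon(\sigma,\tau) := T_\sigma\,\sigma(T_\tau) / \bigl(l(\ideal{c}(\sigma,\tau))\, T_{\sigma\tau}\bigr)$ lies in $O_K^\times$, and associativity in $L^\times$ forces $\epsilon \in Z^2(G(L/K), O_K^\times)$.

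Finally, replacing $T_\sigma$ by $T_\sigma\, \eta(\sigma)$ for a cochain $\eta: G(L/K) \to O_{L,\ideal{F}_{L/K}}^\times$ preserves (i) and (ii) and modifies $\epsilon$ by the coboundary $\eta(\sigma)\sigma(\eta(\tau))\eta(\sigma\tau)^{-1}$, so the theorem reduces to showing that the class of $\epsilon$ in $H^2(G(L/K), O_{L,\ideal{F}_{L/K}}^\times)$ is trivial. This cohomological vanishing is the expected main obstacle, and the point where the ``several other classical results from class field theory'' alluded to in the introduction must enter. The natural route is via Hilbert 90 for $L^\times$ together with the exact sequence
\[
1 \to O_{L,\ideal{F}_{L/K}}^\times \to L^\times \to L^\times/O_{L,\ideal{F}_{L/K}}^\times \to 1,
\]
reducing to an $H^1$ question about a congruence-twisted ideal group of $L$ that can be attacked using the reciprocity law for $L/K$ and the principal-ideal identities underlying Tannaka's original argument.
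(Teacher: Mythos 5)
The reduction to a cohomological trivialisation is a reasonable outline, but it stops precisely at the heart of the matter: you never prove that the class of $\epsilon$ dies in $H^2(G(L/K), O^\times_{L,\ideal{F}_{L/K}})$. That vanishing is not a formal consequence of Hilbert~90 and the exact sequence you write down; $H^2$ with coefficients in a congruence subgroup of units is not a group one can expect to vanish, and you give no argument for why the specific class produced from Tannaka's $\Theta$ should be a coboundary. ``The principal-ideal identities underlying Tannaka's original argument'' is not a proof, and in fact the original Tannaka theorem you invoke is \emph{deduced} in the paper as a corollary of the stronger statement, so using it as an input does not obviously supply the missing cancellation. There is also an uncashed promissory note in the preliminary step: the adjustment making $\Theta$ agree strictly with $l$ on $P^{\ideal{g},\ideal{f}}_K$ while preserving (iii) mod $O_K^\times$ is only asserted, and since $\Theta$ is multiplicative on $P^{\ideal{g},\ideal{f}}_K$ only up to $O_K^\times$, the ratio $\Theta/l$ is not a homomorphism and the existence of the required multiplicative correction is not automatic.

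The paper's proof takes a fundamentally different, constructive route that avoids $H^2$ altogether. It fixes primes $\ideal{p}_1,\dots,\ideal{p}_r$ whose Frobenii generate $G(L/K)$ as a direct sum of cyclic groups, and works one cyclic factor $K_i/K$ at a time. For each $i$, the congruence $l(\ideal{p}_i^{n_i})\equiv 1\bmod \ideal{f}_{L/K}$ makes $l(\ideal{p}_i^{n_i})$ a global norm from $K_i$ (Hasse's Norm Theorem), Noether's Theorem~90 for ideals produces an ideal $\ideal{b}_i$, and, crucially, Terada's Norm Theorem upgrades this to a congruence modulo $\ideal{F}_{L/K}$. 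Tannaka's Theorem~3 (the technical result about $\ideal{a}_1\cdots\ideal{a}_r$ being principal and $\equiv 1\bmod\ideal{F}_{L/K}$ --- \emph{not} the unnumbered Tannaka theorem you cite) then glues the cyclic pieces together to produce elements $\Theta_i$, and a completely explicit formula for $l(\ideal{a})$ in terms of the $\Theta_i$ and the group-ring elements $w_{i,x_i}$ makes the cocycle condition (iii) a direct computation, with the only non-trivial cancellations handled by the relation $N_{K_i/K}(\alpha_i)=l(\ideal{p}_i^{n_i})$. The ingredient your proposal is missing is exactly Terada's Norm Theorem, which is what lets one control the congruence modulo the genus ideal $\ideal{F}_{L/K}$ in terms of a norm condition modulo the conductor $\ideal{f}_{L/K}$; without it (or some substitute of the same strength) there is no visible way to kill your cocycle.
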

	
	Despite its different form, this is indeed a strengthening of Tannaka's Theorem. As $\mrm{P}_K^{\ideal{f}_{L/K}, \ideal{f}_{L/K}}\subset \mrm{Id}_{K}$ and $\mrm{Id}_{K}$ is a free abelian group it follows that $\mrm{P}_K^{\ideal{f}_{L/K}, \ideal{f}_{L/K}}$ is free abelian and so (choosing appropriate generators) there always exist multiplicative functions \[l: \mrm{P}_K^{\ideal{f}_{L/K}, \ideal{f}_{L/K}}\longrightarrow K^\times\] satisfying $l(\ideal{a})\cdot O_K=\ideal{a}$ and $l(\ideal{a})=1 \bmod \ideal{f}_{L/K}$. Therefore, applying (\ref{prop:tannaka-result-general}) to the map $l$ we obtain a map $l: \mrm{Id}_{K}^{\ideal{f}_{L/K}}\longrightarrow L^\times$ and setting $\Theta(\ideal{a})=l(\ideal{a})$ for each $\ideal{a}\in \mrm{Id}_{O_K}^{\ideal{f}_{L/K}}$, we may replace (iii) of Tannaka's Theorem by: $\Theta(\ideal{a})\sigma_\ideal{a}(\Theta(\ideal{b}))=\Theta(\ideal{a}\ideal{b})$
	
	While this result is new, the methods used to prove it follow closely those of Tannaka and main innovation consists of applying several classical results in class field theory before invoking Tannaka's (very technical) proof. In order to ease comparison with Tannaka's proof, we have adopted identical notation to \cite{Tannaka58}, save that we do not denote the action of elements of Galois groups exponentially, writing $\sigma(a)$ for Tannaka's $a^{\sigma}$.
	
	\subsection{} We now recall a selection of results which we will need during the proof of (\ref{prop:tannaka-result-general}).
	
	\begin{theo}[Hasse's Norm Theorem] Let $L/K$ be a finite cyclic extension. Then $N_{L/K}(I_L)\cap K^\times=N_{L/K}(L^\times)$.
	\end{theo}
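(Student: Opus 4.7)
The inclusion $N_{L/K}(L^\times) \subseteq N_{L/K}(I_L) \cap K^\times$ is immediate from the diagonal embedding $L^\times \hookrightarrow I_L$, so the content of the theorem lies in the reverse direction. My plan is to deduce this from the cohomological formalism of global class field theory, exploiting that $G := G(L/K)$ is cyclic so that Tate cohomology is periodic and in particular well-behaved in low degrees.

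Consider the fundamental short exact sequence of $G$-modules
$$1 \longrightarrow L^\times \longrightarrow I_L \longrightarrow C_L \longrightarrow 1,$$
where $C_L := I_L/L^\times$ is the id\`ele class group. The induced long exact sequence in Tate cohomology contains the segment
$$\widehat{H}^{-1}(G, C_L) \longrightarrow \widehat{H}^{0}(G, L^\times) \longrightarrow \widehat{H}^{0}(G, I_L),$$
which unpacks under the standard identification of degree zero Tate cohomology with norm-index quotients to
$$\widehat{H}^{-1}(G, C_L) \longrightarrow K^\times / N_{L/K}(L^\times) \longrightarrow I_K / N_{L/K}(I_L).$$
The kernel of the right-hand arrow is precisely $\bigl(N_{L/K}(I_L)\cap K^\times\bigr) / N_{L/K}(L^\times)$, so the theorem is equivalent to the vanishing $\widehat{H}^{-1}(G, C_L) = 0$.

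To establish this vanishing I would appeal to the two central results of global class field theory. First, since $G$ is cyclic the Herbrand quotient of $C_L$ is defined and satisfies $h(C_L) = |\widehat{H}^0(G, C_L)|/|\widehat{H}^{-1}(G, C_L)| = [L:K]$; this is the classical computation which separates into the so-called ``first'' and ``second inequalities''. Second, the global reciprocity isomorphism identifies $\widehat{H}^0(G, C_L) = C_K/N_{L/K}(C_L) \isomto G$, giving $|\widehat{H}^0(G, C_L)| = [L:K]$. Combining these two computations forces $|\widehat{H}^{-1}(G, C_L)| = 1$, as required.

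The substantive difficulty is thus entirely absorbed into the two class field theoretic inputs above, which is where the main obstacle lies: a genuinely self-contained proof of either the Herbrand quotient computation or the global reciprocity isomorphism requires the full apparatus of global class field theory (analytic density theorems, Brauer group computations, and so forth). Since the theorem is invoked here only as a classical ingredient, in practice I would simply cite a standard reference such as the relevant chapters of \cite{CasselsFrohlich67} and proceed.
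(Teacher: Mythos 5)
The paper does not in fact prove this statement: Hasse's Norm Theorem is recalled in \S A.3 as one of a selection of classical ingredients for the proof of the principal ideal theorem, and no proof (or even explicit citation) is supplied for it. Your sketch is a correct account of the standard modern argument via Tate cohomology of the id\`ele class group, and you correctly isolate the two genuinely deep inputs --- the Herbrand quotient computation $h(C_L) = [L:K]$ (the first and second inequalities) and the reciprocity isomorphism $\widehat{H}^0(G, C_L) \isomto G$ --- as the places where the real content of global class field theory enters. One small imprecision: you assert that the theorem is \emph{equivalent} to the vanishing $\widehat{H}^{-1}(G, C_L) = 0$. Strictly, what the long exact sequence gives is that the theorem is equivalent to the connecting map $\widehat{H}^{-1}(G, C_L) \longrightarrow \widehat{H}^0(G, L^\times)$ being zero, which is implied by, but not formally equivalent to, the vanishing of the source. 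Since the argument you outline establishes the stronger vanishing, this does no harm. Your closing remark --- that in practice one simply cites \cite{CasselsFrohlich67} --- is exactly what the paper does, so your proposal supplies a correct proof where the paper chooses merely to quote.
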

	
	\begin{theo}[Noether's `Theorem 90'] Let $L/K$ be a finite cyclic extension with generator $\sigma\in G(L/K)$ and let $\ideal{a}$ be a fractional ideal of $L$. Then $N_{L/K}(\ideal{a})=O_K$ if and only if $\ideal{a}=\ideal{b}\sigma(\ideal{b})^{-1}$ for some fractional ideal $\ideal{b}$ of $L$.
	\end{theo}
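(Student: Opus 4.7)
The plan is to follow the general strategy of Tannaka's original proof while incorporating the prescribed homomorphism $l\colon P_K^{\ideal{g},\ideal{f}}\to K^\times$ as additional input data. The key new ingredient, relative to the weaker statement of Tannaka, is an application of Hasse's norm theorem at a critical moment in order to realize the prescribed values of $l$ inside the cocycle relation.

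First I would reduce the problem to constructing $l$ on a set of coset representatives. Choose representatives $\ideal{a}_\sigma\in\mrm{Id}_K^\ideal{g}$ for each $\sigma\in G(L/K)\cong\mrm{Id}_K^\ideal{g}/P_K^{\ideal{g},\ideal{f}}$, with $\ideal{a}_{\mrm{id}}=(1)$. Every $\ideal{a}\in\mrm{Id}_K^\ideal{g}$ admits a unique factorization $\ideal{a}=\ideal{a}_{\sigma_\ideal{a}}\cdot(c)$ with $c\equiv 1\bmod\ideal{f}$, and the cocycle identity (iii), combined with the given $l$ on $P_K^{\ideal{g},\ideal{f}}$, forces $l(\ideal{a})=\lambda_{\sigma_\ideal{a}}\cdot l((c))$, where the unknowns $\lambda_\sigma:=l(\ideal{a}_\sigma)$ must satisfy $\lambda_\sigma O_L=\ideal{a}_\sigma O_L$, $\lambda_\sigma\equiv 1\bmod\ideal{F}_{L/K}$, and the twisted 2-cocycle identity $\lambda_\sigma\,\sigma(\lambda_\tau)=l(\ideal{e}_{\sigma,\tau})\,\lambda_{\sigma\tau}$, where $\ideal{e}_{\sigma,\tau}\in P_K^{\ideal{g},\ideal{f}}$ is defined by $\ideal{a}_\sigma\ideal{a}_\tau=\ideal{a}_{\sigma\tau}\ideal{e}_{\sigma,\tau}$. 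The original theorem is thus equivalent to a statement about solving this system.

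Secondly, I would construct the $\lambda_\sigma$ by induction on the abelian-group structure of $G(L/K)$, reducing to the cyclic case. For a cyclic intermediate extension $L_0/K$ with generator $\sigma_0$ of order $n$, iterating the twisted cocycle relation around $\langle\sigma_0\rangle$ forces the norm $N_{L_0/K}(\lambda_{\sigma_0})=\lambda_{\sigma_0}\sigma_0(\lambda_{\sigma_0})\cdots\sigma_0^{n-1}(\lambda_{\sigma_0})$ to equal a specific element of $K^\times$ pinned down by the $l(\ideal{e}_{\sigma_0^i,\sigma_0})$. Hasse's norm theorem is precisely the tool that produces a candidate $\lambda_{\sigma_0}\in L_0^\times$ with the prescribed norm and generating the correct ideal $\ideal{a}_{\sigma_0}O_{L_0}$; the congruence $\lambda_{\sigma_0}\equiv 1\bmod\ideal{F}_{L/K}$ is then arranged by multiplying by a norm-trivial principal unit, exploiting the defining property of the genus ideal (namely, that $\ideal{F}_{L/K}$ governs the image of norms of principal units of $L_0$).

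The main obstacle is the passage from the cyclic case to the general abelian case: the values $\lambda_\sigma$ produced independently for each cyclic factor of $G(L/K)$ must interact compatibly to yield a genuine global 2-cocycle rather than merely satisfying each cyclic relation in isolation. I would handle this by induction on the number of cyclic factors of $G(L/K)$, patching together data for sub- and quotient-extensions via an inflation–restriction style argument. This is the technical core of Tannaka's original proof, and our extra task in the present generalised setting is to track the prescribed $l$ and the congruence modulo $\ideal{F}_{L/K}$ through every step of the inductive construction, repeatedly invoking Hasse's norm theorem to re-adjust $\lambda_\sigma$ as additional cyclic factors are incorporated.
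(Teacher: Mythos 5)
Your proposal does not address the statement at hand. The statement is Noether's `Theorem 90' for ideals: in a finite cyclic extension $L/K$ with generator $\sigma$, a fractional ideal $\ideal{a}$ of $L$ satisfies $N_{L/K}(\ideal{a})=O_K$ if and only if $\ideal{a}=\ideal{b}\sigma(\ideal{b})^{-1}$ for some fractional ideal $\ideal{b}$ of $L$. What you have written is instead an outline of a proof of the principal ideal theorem (\ref{prop:tannaka-result-general}) --- the extension of the homomorphism $l$ from $\mrm{P}_K^{\ideal{g},\ideal{f}}$ to $\mrm{Id}_K^{\ideal{g}}$ with the cocycle property --- which is a different result, one whose proof merely \emph{uses} Theorem 90 as one ingredient among several. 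Nothing in your text engages with the claim actually to be proved: at no point do you take a norm-trivial fractional ideal $\ideal{a}$ of $L$ and produce an ideal $\ideal{b}$ with $\ideal{a}=\ideal{b}\sigma(\ideal{b})^{-1}$.

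For the record, the intended argument is short. The `if' direction is immediate, since $N_{L/K}(\ideal{b})=N_{L/K}(\sigma(\ideal{b}))$. For the `only if' direction one observes that, $G(L/K)$ being cyclic with generator $\sigma$, the quotient of the group of norm-trivial fractional ideals by the subgroup of ideals of the form $\ideal{b}\sigma(\ideal{b})^{-1}$ is identified with $H^1(G(L/K),\mrm{Id}_L)$ (equivalently the Tate group $\widehat{H}^{-1}$), and this cohomology group vanishes: $\mrm{Id}_L$ is a direct sum, over the primes $\ideal{p}$ of $K$, of permutation modules $\Z[G(L/K)/D_{\ideal{P}}]$ with $D_{\ideal{P}}$ the decomposition group of a prime $\ideal{P}$ above $\ideal{p}$, and by Shapiro's lemma $H^1(G(L/K),\Z[G(L/K)/D_{\ideal{P}}])=H^1(D_{\ideal{P}},\Z)=\Hom(D_{\ideal{P}},\Z)=0$ as $D_{\ideal{P}}$ is finite. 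The paper simply quotes this vanishing from the literature. None of the tools you invoke (Hasse's norm theorem, Terada's theorem, the genus ideal, the twisted cocycle system) is relevant to this statement, and the outline cannot be repaired into a proof of it; it belongs to a different theorem.
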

	\begin{proof} As $G(L/K)$ is cyclic and generated by $\sigma$ we have \[\frac{\{\ideal{a}\in \mrm{Id}_{L}: N_{L/K}(\ideal{a})=O_L\}}{\{\ideal{a}\sigma(\ideal{a})^{-1}: \ideal{a}\in \mrm{Id}_L\}}\isomto H^1(G(L/K), \mrm{Id}_{L}): \ideal{a} \mapsto (\sigma^i\mapsto \ideal{a} \sigma(\ideal{a})\cdots\sigma^{i-1}(\ideal{a})).\] By Proposition 6, \S 13, Chapter V of \cite{Lemmermeyer07} the group $H^1(G(L/K), \mrm{Id}_{L})$ vanishes which is the claim.
	\end{proof}
	
	\begin{theo}[Terada's Norm Theorem] Let $L/K$ be a finite cyclic extension with generator $\sigma\in G(L/K)$, let $\alpha\in L^\times$ and let $\ideal{m}$ be an ideal of $O_L$. Then the following are equivalent:
		\begin{enumerate}[label=\textup{(\roman*)}]
			\item $N_{L/K}(\alpha)=1\bmod \ideal{f}_{L/K}\ideal{m}$.
			\item $\alpha=\beta\sigma(\beta)^{-1}\bmod \ideal{F}_{L/K}\ideal{m}$ for some $\beta\in L$.
		\end{enumerate}
	\end{theo}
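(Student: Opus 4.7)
My plan is to prove the two implications separately. The direction $\textrm{(ii)}\Rightarrow \textrm{(i)}$ is a direct symmetric-function computation using the identity $\ideal{f}_{L/K}=\ideal{D}_{L/K}\ideal{F}_{L/K}$ relating the conductor, different and genus ideal. The direction $\textrm{(i)}\Rightarrow\textrm{(ii)}$ reduces via the classical Hilbert 90 to a refined norm-lifting statement on higher unit groups, whose verification is the main technical point and rests on Hasse's Norm Theorem applied place by place.

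For $\textrm{(ii)}\Rightarrow \textrm{(i)}$, write $\alpha=\beta\sigma(\beta)^{-1}(1+\eta)$ with $\eta\in\ideal{F}_{L/K}\ideal{m}$. Since $N_{L/K}(\beta\sigma(\beta)^{-1})=1$, one gets
\[N_{L/K}(\alpha)=\prod_{\tau\in G(L/K)}(1+\tau(\eta))=1+\mrm{Tr}_{L/K}(\eta)+\sum_{j=2}^{[L:K]}e_j(\eta),\]
where $e_j(\eta)$ is the $j$-th elementary symmetric polynomial in the conjugates $\tau(\eta)$. For $j\geq 2$ we have $e_j(\eta)\in (\ideal{F}_{L/K}\ideal{m})^{2}\subset \ideal{f}_{L/K}\ideal{m}$, so it remains to handle the linear term: using $\ideal{F}_{L/K}=\ideal{f}_{L/K}\ideal{D}_{L/K}^{-1}$ together with the defining property $\mrm{Tr}_{L/K}(\ideal{D}_{L/K}^{-1})\subset O_K$ of the codifferent, one obtains $\mrm{Tr}_{L/K}(\ideal{F}_{L/K}\ideal{m})\subset \ideal{f}_{L/K}\ideal{m}$, completing this direction.

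For $\textrm{(i)}\Rightarrow\textrm{(ii)}$, assume $N_{L/K}(\alpha)\equiv 1\pmod{\ideal{f}_{L/K}\ideal{m}}$. I claim it suffices to produce $u\in L^{\times}$ with $u\equiv 1\pmod{\ideal{F}_{L/K}\ideal{m}}$ and $N_{L/K}(u)=N_{L/K}(\alpha)$: given such a $u$, the element $\alpha u^{-1}$ has norm one, and the classical Hilbert 90 for the cyclic extension $L/K$ produces $\beta\in L^{\times}$ with $\alpha u^{-1}=\beta\sigma(\beta)^{-1}$, whence $\alpha \equiv \beta\sigma(\beta)^{-1}\pmod{\ideal{F}_{L/K}\ideal{m}}$. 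To construct $u$ I would proceed place by place: at each place $v$ of $K$ with $w$ above $v$, local class field theory together with the local form of the identity $f_v=d_v+F_v$ yields the inclusion
\[U_{K_v}^{(f_v+m_v)}\subset N_{L_w/K_v}\bigl(U_{L_w}^{(F_v+m_v)}\bigr),\]
so that a local preimage $u_v\equiv 1 \pmod{\ideal{F}_{L/K}\ideal{m}}$ of the image of $N_{L/K}(\alpha)$ in $K_v^\times$ exists, and Hasse's Norm Theorem (combined with weak approximation away from the finitely many places dividing $\ideal{f}_{L/K}\ideal{m}$) then patches these local preimages into a single global $u\in L^{\times}$ with the required congruence and norm.

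The main obstacle will be the local inclusion at ramified places. At a place $v\mid\ideal{f}_{L/K}$ with local conductor exponent $f_v$, local different exponent $d_v$ and local genus exponent $F_v=f_v-d_v$, one must verify
\[N_{L_w/K_v}\bigl(U_{L_w}^{(n+F_v)}\bigr)\supset U_{K_v}^{(n+f_v)}\qquad (n\geq 0),\]
which amounts to the classical computation of the norm map on higher unit groups in terms of the Herbrand $\psi$-function and the jumps of the upper-numbering ramification filtration. The input from the Hasse--Arf theorem---that these jumps occur at integers in the upper numbering---is precisely what produces the exponent shift by $d_v$ and makes the identity $f_v=d_v+F_v$ effective at the level of filtrations; this is where the interplay between $\ideal{D}_{L/K}$, $\ideal{F}_{L/K}$ and $\ideal{f}_{L/K}$ is truly used, and it is the heart of Terada's argument.
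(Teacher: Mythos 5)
The paper does not actually prove this statement: its ``proof'' is a citation to Theorem 2 of Terada's 1952 paper, so there is no internal argument to measure yours against. Judged on its own terms, your sketch has a genuine gap at the local-to-global step of (i)$\Rightarrow$(ii). You correctly reduce to producing a single $u\in L^{\times}$ with $N_{L/K}(u)=N_{L/K}(\alpha)$ \emph{exactly} and $u\equiv 1\bmod \ideal{F}_{L/K}\ideal{m}$, but the tools you invoke cannot deliver both conditions simultaneously. Hasse's Norm Theorem produces some global $u_0$ with the right norm but with no control whatsoever on its congruence class; weak approximation lets you match prescribed local elements to any accuracy but then only controls the norm approximately, whereas Hilbert 90 needs norm exactly $1$. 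The natural repair --- modify $u_0$ by a norm-one element $\beta\sigma(\beta)^{-1}$ so as to land in the right congruence class --- is precisely assertion (ii) applied to $u_0$ itself, so the argument as written is circular. Passing from everywhere-local solvability with congruence conditions to a global solution is the actual content of Terada's theorem (it is the elementwise analogue of the principal genus theorem), and it requires the genus/ambiguous-class machinery of class field theory, not Hasse plus approximation. Note also that the ``local'' condition at a place $v$ that splits in $L$ involves the product of norms over all $w\mid v$, so even the place-by-place setup needs care.

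Two secondary issues. In (ii)$\Rightarrow$(i), the bound $e_j(\eta)\in(\ideal{F}_{L/K}\ideal{m})^{2}\subset\ideal{f}_{L/K}\ideal{m}$ for $j\geq 2$ is equivalent to $\ideal{F}_{L/K}\ideal{m}\subset\ideal{D}_{L/K}$, which is not automatic from $\ideal{f}_{L/K}=\ideal{D}_{L/K}\ideal{F}_{L/K}$ alone (consider $\ideal{F}_{L/K}$ and $\ideal{m}$ trivial with $L/K$ ramified); whether it holds depends on the precise definitions of the genus ideal and of the multiplicative congruences, which you have not pinned down. Relatedly, your exponent bookkeeping $F_v=f_v-d_v$ conflates valuations in $K$ and in $L$: the identity $\ideal{f}_{L/K}=\ideal{D}_{L/K}\ideal{F}_{L/K}$ mixes an ideal of $L$ with ideals of $K$, so at a ramified place the exponent relation involves the ramification index. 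These are repairable once the definitions from Terada's paper are in place, but the patching gap above is structural, and without it the hard implication is unproven.
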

	\begin{proof} This is Theorem 2 of \cite{Terada1952}.
	\end{proof}
	
	Finally, with notation as in (\ref{prop:tannaka-result-general}), let $\ideal{p}_1, \ldots, \ideal{p}_r$ be prime ideals of $K$, prime to $\ideal{g}$ and with the property that \[G(L/K)=\bigoplus_{i=1}^r \langle \sigma_i \rangle\] where $\sigma_i=\sigma_{\ideal{p}_i}$. Also, for $1\leq i\leq n$ let $n_i$ be the order of $\sigma_i$ and let $K_i\subset L$ be the sub-extension fixed by the sub-group of $G(L/K)$ generated by $\{\sigma_j\}_{j\neq i}$, so that $G(K_i/K)\isomto \langle \sigma_i \rangle$.
	
	\begin{theo}\label{thm:tannaka-principal} For $1\leq i\leq r$ let $\ideal{a}_i$ be a fractional ideal of $K_i$ such that $\ideal{p}_i=\ideal{a}_i\sigma_i(\ideal{a}_i)^{-1}\bmod \ideal{f}_{L/K_i/K}$. Then $\ideal{a}_1\cdots \ideal{a}_r=O_L \bmod \ideal{F}_{L/K}$.
	\end{theo}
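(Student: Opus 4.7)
My plan is to run an induction on the number $r$ of cyclic factors, adapting the techniques of Tannaka from \cite{Tannaka58}. The three main tools are Hasse's Norm Theorem, Noether's Theorem 90 for ideals, and especially Terada's Norm Theorem; I would apply them in a tower, peeling off one cyclic factor at a time.

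First, I would reformulate the hypothesis at the element level: for each $i$, there exists $\alpha_i \in K_i^\times$ with $\alpha_i \equiv 1 \bmod \ideal{f}_{L/K_i/K}$ and $(\alpha_i) = \ideal{p}_i \ideal{a}_i^{-1}\sigma_i(\ideal{a}_i)$. The goal is to produce $\beta \in L^\times$ with $\beta \equiv 1 \bmod \ideal{F}_{L/K}$ and $(\beta) = \ideal{a}_1\cdots \ideal{a}_r \cdot O_L$. For the base case $r=1$, one has $L = K_1$ cyclic over $K$ and $\ideal{f}_{L/K_1/K}$ collapses to $\ideal{F}_{L/K}$; taking $N_{L/K}$ of $(\alpha_1) = \ideal{p}_1 \ideal{a}_1^{-1}\sigma_1(\ideal{a}_1)$ and using that $\ideal{p}_1$ is inert in $L$ yields $N_{L/K}(\alpha_1) \equiv 1 \bmod \ideal{f}_{L/K}$, with the congruence bootstrapped via the tower formula $\ideal{f}_{L/K} = \ideal{D}_{L/K}\ideal{F}_{L/K}$. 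Terada's Norm Theorem then produces $\beta_1 \in L^\times$ with $\alpha_1 \equiv \beta_1\sigma_1(\beta_1)^{-1} \bmod \ideal{F}_{L/K}$, and a short ideal-theoretic calculation (invoking Noether's 90 to dispose of the remaining coboundary) yields a generator of $\ideal{a}_1 \cdot O_L$ modulo $\ideal{F}_{L/K}$.

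For the inductive step, pass to the intermediate field $L^{(r)} \subset L$ fixed by $\langle\sigma_r\rangle$, so that $G(L^{(r)}/K) = \bigoplus_{i<r}\langle \sigma_i\rangle$ and $L/L^{(r)}$ is cyclic of degree $n_r$. One first checks that $\ideal{a}_1,\ldots,\ideal{a}_{r-1}$ and $\ideal{p}_1,\ldots,\ideal{p}_{r-1}$ still verify the hypotheses of the theorem for the subtower $L^{(r)}/K$, using monotonicity of the conductor under $L^{(r)}\subset L$ and the compatibility of the different and genus ideals under subextensions. By induction, $\ideal{a}_1\cdots\ideal{a}_{r-1}$ is principal in $L^{(r)}$ modulo $\ideal{F}_{L^{(r)}/K}$. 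Extending to $L$ and combining with the contribution of $\ideal{a}_r$ via Terada's theorem applied now to the cyclic extension $L/L^{(r)}$ (with auxiliary modulus $\ideal{m}$ chosen to upgrade the congruence from $\ideal{F}_{L^{(r)}/K}$ to $\ideal{F}_{L/K}$) completes the induction; Hasse's Norm Theorem is used in this combining step to guarantee that the norm conditions forced by the direct-sum decomposition of $G(L/K)$ are realizable at the element level.

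The main obstacle will be the bookkeeping of congruences across the tower. Specifically, the identity $\ideal{f}_{L/K_i/K} = \ideal{D}_{L/K_i}\ideal{F}_{L/K}$ must be played against its analogues at each intermediate stage so that the congruences assembled from different cyclic factors agree modulo $\ideal{F}_{L/K}$. This is precisely the delicate point that occupies the bulk of Tannaka's original argument, and it is here that the flexibility in the choice of the auxiliary modulus $\ideal{m}$ in Terada's Norm Theorem becomes essential for chaining congruences together through the induction.
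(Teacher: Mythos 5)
The paper does not reprove this theorem; its ``proof'' is a bare citation to Theorem 3 of Tannaka's paper, so your sketch is a blind reconstruction of Tannaka's long, technical argument rather than something the paper presents. Note moreover that the three tools you invoke --- Hasse's Norm Theorem, Noether's Theorem 90, and Terada's Norm Theorem --- are exactly the ones the paper deploys, but in the proof of the \emph{surrounding} Theorem~\ref{prop:tannaka-result-general}, where they serve to \emph{construct} suitable ideals $\ideal{a}_i$ to which the present theorem is then applied as a black box; that is a different task from proving the present statement.

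As a reconstruction, the base case $r=1$ already contains a concrete gap. There $K_1=L$ and $\ideal{f}_{L/K_1/K}=\ideal{D}_{L/L}\ideal{F}_{L/K}=\ideal{F}_{L/K}$, so the hypothesis only furnishes $\alpha_1\equiv 1\bmod\ideal{F}_{L/K}$ with $(\alpha_1)=\ideal{p}_1 O_L\cdot\ideal{a}_1^{-1}\sigma_1(\ideal{a}_1)$. Taking norms and using the $G(L/K)$-invariance of $\ideal{F}_{L/K}$ yields only $N_{L/K}(\alpha_1)\equiv 1\bmod\ideal{F}_{L/K}$, not the $\bmod\ideal{f}_{L/K}$ congruence you assert. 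The tower formula $\ideal{f}_{L/K}=\ideal{D}_{L/K}\ideal{F}_{L/K}$ cuts the wrong way here: it shows that $\ideal{F}_{L/K}$ divides $\ideal{f}_{L/K}$, so a congruence modulo $\ideal{F}_{L/K}$ is \emph{strictly weaker} than one modulo $\ideal{f}_{L/K}$ and cannot be ``bootstrapped'' into it without a genuinely new input. Since Terada's Norm Theorem requires $N_{L/K}(\alpha)\equiv 1\bmod\ideal{f}_{L/K}\ideal{m}$ as its hypothesis, the step invoking it does not go through. The inductive step is likewise underspecified: the subfield $L^{(r)}$ fixed by $\langle\sigma_r\rangle$ need not be a ray class field of $K$, so the theorem as stated does not apply to the subtower, and the mechanism for upgrading congruences from $\ideal{F}_{L^{(r)}/K}$ to $\ideal{F}_{L/K}$ --- which you yourself flag as the delicate point --- is precisely what a proof would have to supply, and it is absent.
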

	\begin{proof} This is Theorem 3 of \cite{Tannaka58}.
	\end{proof}
	
	\subsection{} We are now proceed with the proof of (\ref{prop:tannaka-result-general}) and shall also use the notation introduced above preceding (\ref{thm:tannaka-principal}).
	
	For each $1\leq i\leq r$ let us make the following constructions. As $\ideal{p}_i^{n_i}\in \mrm{P}_K^{\ideal{g}, \ideal{f}_{L/K}}$ we have  $l(\ideal{p}_i^{n_i})=1\bmod \ideal{f}_{L/K}$ and a fortiori $l(\ideal{p}_i^{n_i})=1\bmod \ideal{f}_{K_i/K}$ so that $l(\ideal{p}_i^{n_i})\in N_{K_i/K}(I_{K_i})$ (i.e. $l(\ideal{p}_i^{n_i})$ is a local norm everywhere). However, $l(\ideal{p}_i^{n_i})$ is also an element of $K$ so that by Hasse's Norm Theorem, there is some $\pi_i\in K_i$ with $N_{K_i/K}(\pi_i)=l(\ideal{p}_i^{n_i})$. By construction we have \[N_{K_i/K}(\ideal{p}_i(\pi_i)^{-1})=\ideal{p}_i^{n_i}\ideal{p}_i^{-n_i}=O_K\] so that, by Noether's `Theorem 90', there is an ideal $\ideal{b}_i$ of $K_i$ with \[\ideal{p}_i(\pi_i)^{-1}=\ideal{b}_i\sigma_i(\ideal{b}_i)^{-1}.\] We now apply Terada's Norm Theorem (with $\ideal{m}=\ideal{f}_{L/K}\ideal{f}_{K_i/K}^{-1}$) to find $\alpha_i, \beta_i\in K_i$ satisfying $\alpha_i=1\bmod \ideal{f}_{L/K_i/K}$, $\pi_i =\alpha_i \beta_i\sigma_i(\beta_i)^{-1}$ and \begin{equation}\label{eqn:def-alpha} N_{K_i/K}(\pi_i)=N_{K_i/K}(\alpha_i \beta_i\sigma_i(\beta_i)^{-1})=N_{K_i/K}(\alpha_i)=l(\ideal{p}_i^{n_i}).
	\end{equation} Finally, we set $\ideal{a}_i=(\beta_i)\ideal{b}_i$ so that $\ideal{p}_i=(\alpha_i) \ideal{a}\sigma_i^{-1}(\ideal{a}_i)$ and hence \[\ideal{p}_i=\ideal{a}\sigma_i^{-1}(\ideal{a}_i)\bmod \ideal{f}_{L/K_i/K}.\] The ideals $\ideal{a}_i$ for $1\leq i\leq r$ satisfy the conditions of (\ref{thm:tannaka-principal}) so that there is an $A\in L^\times$ with $A=1\bmod \ideal{F}_{L/K}$ and \[\ideal{a}_1\cdots \ideal{a}_r=(A).\] Finally, we set \[\Theta_i=\alpha_i A\sigma_i(A)^{-1}\] and note that $\Theta_i\cdot O_L=\ideal{p}_i\cdot O_L$.
	
	We now go about extending the map $l$ and doing so following rather closely the method of \S 1 of \cite{Tannaka58}. Each $\ideal{a}\in \mrm{Id}_K^{\ideal{g}}$ can be written uniquely as a product of ideals \[\ideal{a}=\gamma(\ideal{a})\cdot \prod_{i=1}^r \ideal{p}_i^{x_i}\] with $\gamma(\ideal{a})\in \mrm{P}_K^{\ideal{g},\ideal{f}_{L/K}}$ and $0\leq x_i<n_i$. Before we continue, it will be useful to note the values of $\gamma(\ideal{a}\ideal{b})$ for a few specific choices of the ideal $\ideal{b}$:
	\begin{enumerate}[label=\textup{(\roman*)}]
		\item If $\ideal{b}\in \mrm{P}_K^{\ideal{g}, \ideal{f}_{L/K}}$ then \begin{equation}\label{eqn:mult-prin} \gamma(\ideal{a}\ideal{b}) = \gamma(\ideal{a})\gamma(\ideal{b}).
		\end{equation}
		\item If $\ideal{b}=\ideal{p}_j$ and $x_j\neq n_j-1$ then \begin{equation}\label{eqn:mult-prime-triv} \gamma(\ideal{p}_j)=O_K \quad \text{ and } \quad \gamma(\ideal{a}\ideal{p}_j)=\gamma(\ideal{a}).
		\end{equation}
		\item If $\ideal{b}=\ideal{p}_j$ and $x_j=n_j-1$ then \begin{equation}\label{eqn:mult-prime-carry}\gamma(\ideal{a}\ideal{p}_j)=\gamma(\ideal{a})\gamma(\ideal{p}_j^{n_j}). 
		\end{equation}
	\end{enumerate}
	Still following \S 1 of \cite{Tannaka58} we now define $l(\ideal{a})$ for any $\ideal{a}\in \mrm{Id}_{K}^{\ideal{g}}$ by \[l(\ideal{a}):=l(\gamma(\ideal{a}))\prod_{i=1}^n w_{i, x_i}(\Theta_i)\]
	where \[w_{i, x_i}=\left(\sum_{j=1}^{x_i}\sigma_{i}^j\right)\cdot\prod_{k=1}^{i-1} \sigma_{k}^{x_k}\in \Z[G(L/K)]\] and where $\Z[G(L/K)]$ acts multiplicatively on elements of $L^\times$: $(\sigma+\tau)a=\sigma(a)\tau(a)$. It is clear that $l(\ideal{a})\cdot O_{L}=\ideal{a}\cdot O_{L}$ and that this map does indeed extend the given map $l$. Moreover, by construction we have the relations \[A=1\bmod \ideal{F}_{L/K} \quad\quad l(\ideal{a})=1\bmod \ideal{f}_{L/K} \quad\quad \alpha_i=1\bmod \ideal{f}_{L/K_i/K},\] so that as $\ideal{F}_{L/K}$ divides both $\ideal{f}_{L/K}$ and $\ideal{f}_{L/K_i/K}$, and as $\ideal{F}_{L/K}$, $\ideal{f}_{L/K}$ and $\ideal{f}_{L/K_i/K}$ are invariant under the action of $G(L/K)$, we get the relation \[l(\ideal{a})=1\bmod \ideal{F}_{L/K}.\] All that remains to be shown is that $l(\ideal{a}\ideal{b})=l(\ideal{a})\sigma_\ideal{a}(l(\ideal{b}))$ for $\ideal{a}, \ideal{b}\in \mrm{Id}_K^{\ideal{g}}$.
	
	So let $\ideal{b}\in \mrm{Id}_K^{\ideal{g}}$ be another fractional ideal, and also write \[\ideal{b}=\gamma(\ideal{b})\cdot \prod_{i=1}^r \ideal{p}_i^{y_i} \quad \text{ and }\quad \ideal{a}\ideal{b}=\gamma(\ideal{a}\ideal{b})\cdot \prod_{i=1}^r \ideal{p}_i^{z_i}\] where $\gamma(\ideal{b}), \gamma(\ideal{a}\ideal{b})\in \mrm{P}_K^{\ideal{g}, \ideal{f}_{L/K}}$ and $0\leq y_i, z_i<n_i$. Define $\delta_i\in \{0, 1\}$ for $1\leq i\leq r$ by the equality \[z_i=x_i+y_i-\delta_i n_i.\] Then one finds (see equation (9) of \cite{Tannaka58}) \begin{equation}\label{eqn:tannaka}\frac{l(\ideal{a})\sigma_\ideal{a}(l(\ideal{b}))}{l(\ideal{a}\ideal{b})}=\frac{l(\gamma(\ideal{a}))l(\gamma(\ideal{b}))}{l(\gamma(\ideal{a}\ideal{b}))}\cdot \prod_{i=1}^r N_{K_i/K}(\alpha_i)^{\delta_i}.\end{equation}
	
	It remains to check that the right hand side of (\ref{eqn:tannaka}) is equal to one for all ideals $\ideal{b}$. The group $\mrm{Id}_{K}^{\ideal{g}}$ is generated (as a monoid!) by $\mrm{P}_K^{\ideal{g}, \ideal{f}_{L/K}}$ and $\ideal{p}_1, \ldots, \ideal{p}_n$ so that by induction, it is enough to check that $l(\ideal{a}\ideal{b})=l(\ideal{a})\sigma_\ideal{a}(l(\ideal{b}))$ when $\ideal{b}\in \mrm{P}_K^{\ideal{g}, \ideal{f}_{L/K}}$ or when $\ideal{b}=\ideal{p}_j$ for $1\leq j\leq r.$
	If $\ideal{b}\in \mrm{P}_K^{\ideal{g}, \ideal{f}_{L/K}}$ then $\delta_i=0$ for $1\leq i\leq r$ and by (\ref{eqn:mult-prin}) we have $\gamma(\ideal{a})\gamma(\ideal{b})=\gamma(\ideal{a}\ideal{b})$ so that \[\frac{l(\gamma(\ideal{a}))l(\gamma(\ideal{b}))}{l(\gamma(\ideal{a}\ideal{b}))}\prod_{i=1}^r N_{K_i/K}(\alpha_i)^{\delta_i}=1.\]
	
	If $\ideal{b}=\ideal{p}_j$ and $x_j\neq n_j-1$ then $\delta_i=0$ for $1\leq i\leq r$ and by (\ref{eqn:mult-prime-triv}) we have $\gamma(\ideal{p}_j)=O_K$ and $\gamma(\ideal{a}\ideal{p}_j)=\gamma(\ideal{a})$ so that \[\frac{l(\gamma(\ideal{a}))l(\gamma(\ideal{b}))}{l(\gamma(\ideal{a}\ideal{b}))}\prod_{i=1}^r N_{K_i/K}(\alpha_i)^{\delta_i}=1.\]
	
	Finally, if $\ideal{b}=\ideal{p}_j$ and $x_j=n_j-1$ then $\delta_i=0$, unless $i=j$ in which case $\delta_j=1$, so that \[\prod_{i=1}^r N_{K_i/K}(\alpha_i)^{\delta_i}=N_{K_j/K}(\alpha_j)=l(\gamma(\ideal{p}^{n_j}_j))\] by (\ref{eqn:def-alpha}). By (\ref{eqn:mult-prime-carry}) we have $\gamma(\ideal{a}\ideal{p}_j)=\gamma(\ideal{a})\gamma(\ideal{p}_j^{n_j})$ so that \[\frac{l(\gamma(\ideal{a}))l(\gamma(\ideal{b}))}{l(\gamma(\ideal{a}\ideal{b}))}\prod_{i=1}^r N_{K_i/K}(\alpha_i)^{\delta_i}=l(\gamma(\ideal{p}_j^{n_j}))^{-1}l(\gamma(\ideal{p}_j^{n_j}))=1.\] Therefore, for all $\ideal{a}, \ideal{b}\in \mrm{Id}_K^{\ideal{g}}$ we have $l(\ideal{a}\ideal{b})=l(\ideal{a})\sigma_\ideal{a}(l(\ideal{b}))$ and (\ref{prop:tannaka-result-general}) is proven.
	\bibliographystyle{alpha}
	\bibliography{shim}
\end{document}